\documentclass[12pt,reqno]{amsart}

\usepackage{stmaryrd}
\usepackage[english]{babel}
\usepackage{afterpage}
\usepackage{caption}
\usepackage{a4wide}
\usepackage{pgf}
\usepackage{subfigure}
\usepackage{multirow}
\setlength{\topmargin}{-0.4in}
\setlength{\textheight}{9.0in}     %8.5
\setlength{\textwidth}{6.5in}      % {6truein}
\setlength{\oddsidemargin}{.20in}  %
\setlength{\evensidemargin}{.20in} %

%\documentclass[final,leqno,onefignum,onetabnum]{siamltex}
%\documentclass[leqno,onefignum,onetabnum]{siamltex} 
%%%%%%%%%%%%%%%%%%%%%%%%%%%%%%%%%%%%%%%%%%%%%%%%%%%%%
%\usepackage[notcite,notref]{showkeys}
%\usepackage{amssymb,amsmath,amsbsy,eucal,amsfonts,mathrsfs,latexsym}
%\usepackage{graphicx,verbatim,psfrag,epsfig,enumerate}
%\usepackage{lscape}
%\usepackage{color}
%\usepackage{framed}
%\usepackage{appendix}
%%%%%%%%%%%%%%%%%%%%%%%%%%%% New Command %%%%%%%%%%%%%%%%%%%%%%%%%%%%%%%%%%%%
%%%%%%%%%%spaces%%%%%%%%%%%%%        
\newcommand{\Oh}{\mathcal{T}_h}
\newcommand{\Eh}{\mathcal{E}_h}
%\newcommand{\Ehi}{\mathcal{E}^i_h}

%%%%%%%%%%boldletters%%%%%%%%%%%

\newcommand{\n}{\boldsymbol{n}}
\newcommand{\Piv}{\underline{\boldsymbol{\Pi_V}}}
\newcommand{\Piw}{\boldsymbol{\Pi_W}}

\newcommand{\Pim}{\boldsymbol{P_M}}
%%%%%%%%%%%%%%%%%%%%%%%%%%%
\newcommand{\eu}{\boldsymbol{e_u}}
\newcommand{\esigma}{\underline{\boldsymbol{e_{\sigma}}}}

\newcommand{\euhat}{\boldsymbol{e_{\widehat{u}}}}
%%%%%%%%%%%%%%%%%%%%%%%%%%%
\newcommand{\deltasigma}{\underline{\boldsymbol{\sigma}} - \Piv \underline{\boldsymbol{\sigma}}}
\newcommand{\deltau}{\boldsymbol{u} - \Piw \boldsymbol{u}}

%%%%%%%%%%integrals%%%%%%%
\newcommand{\bint}[2]{( #1\,,\,#2 )_{\Oh}}
\newcommand{\bintEh}[2]{\langle #1\,,\,#2 \rangle_{\partial{\Oh}}}
\newcommand{\bintEhi}[2]{\langle #1\,,\,#2 \rangle_{\partial{\Oh} \backslash \partial \Omega}}
\newcommand{\bintEhb}[2]{\langle #1\,,\,#2 \rangle_{\partial{\Omega}}}
\newcommand{\bintK}[2]{\langle #1\,,\,#2 \rangle_{\partial{K}}}

\numberwithin{equation}{section}

\newtheorem{theorem}{Theorem}[section]
\newtheorem{lemma}[theorem]{Lemma}
\newtheorem{corollary}{Corollary}[section]
\newtheorem{proposition}{Proposition}[section]

\theoremstyle{definition}

\theoremstyle{remark}
\newtheorem{remark}[theorem]{Remark}
\newtheorem{assumption}{Assumption}[section]

%%%%%%%%%%%%%%%%%%%%%%%%%%%%%%%%%%%%%%%%%%%%%%%%%%%%%
%\usepackage[notref,notcite]{showkeys}
\linespread{1}
%%%%%%%%%%%%%%%%%%%%%%%%%%%%%%%%%%%%%%%%%%%%%%%%%%%%%
\usepackage{color}
\definecolor{black}{rgb}{0,0,0}

\definecolor{red}{rgb}{1,0,0}

\definecolor{blue}{rgb}{0,0,1}

%%%%%%%%%%%%%%%%%%%%%%%%%%%%%%%%%%%%%%%%%%%%%%%%%%%%%%

%%%%%%%%%%%%%%%%%%%%%%%%%%%%%%%%%%%%%%%%%%%%%%%%%%%%%%%%%%%%%%%%%%%%%%%%%%%%%
\title{An HDG method for linear elasticity with strong symmetric stresses}

\author{Weifeng Qiu}
\address{Department of Mathematics, City University of Hong Kong, 83 Tat Chee Avenue, Hong Kong} 
\email{weifeqiu@cityu.edu.hk}

\author{Jiguang Shen}
\address{Department of Mathematics, University of Minnesota, Minneapolis, MN 55455, USA} 
\email{shenx179@umn.edu}

\author{Ke Shi}
\address{Department of Mathematics \& Statistics, Old Dominion University, Norfolk, VA 23529, USA} 
\email{kshi@odu.edu}

%\date{\today}

\begin{document}

\begin{abstract}
This paper presents a new hybridizable discontinuous Galerkin (HDG) method for linear elasticity on 
general polyhedral meshes, based on a strong symmetric stress formulation. The key feature of this 
new HDG method is the use of a special form of the numerical trace of the stresses, which makes 
the error analysis different from the projection-based error analyzes used for most other 
HDG methods. For arbitrary polyhedral elements, we approximate the stress by using polynomials of
degree $k\ge 1$ and the displacement by using polynomials of degree $k+1$. In contrast, to approximate 
the numerical trace of the displacement on the faces, we use polynomials of degree $k$ only. This allows
for a very efficient implementation of the method, since the numerical trace of the displacement is the only 
globally-coupled unknown, but does not degrade the convergence properties of the method. Indeed,  
we prove optimal orders of convergence for both the stresses and displacements on the elements. In the almost 
incompressible case, we show the error of the stress is also optimal in the standard $L^2-$norm. 
These optimal results are possible thanks to a special superconvergence property of the numerical traces of 
the displacement, and thanks to the use of a crucial elementwise Korn's inequality. Several numerical results are
presented to support our theoretical findings in the end. 
\end{abstract}

%\begin{keywords} 
%hybridizable; discontinuous Galerkin; superconvergence; linear elasticity.
%\end{keywords}
%\begin{AMS}
%65N30, 65L12, 35L15
%\end{AMS}

\keywords{hybridizable; discontinuous Galerkin; superconvergence; linear elasticity}
\subjclass[2000]{65N30, 65L12}

\maketitle

\section{Introduction} 
In this paper, we introduce a new hybridizable discontinuous Galerkin (HDG) method for the system of linear elasticity
\begin{subequations}\label{elasticity_equation}
 \begin{alignat}{2}
  \label{elasticity_1}
\mathcal{A} \underline{\boldsymbol{\sigma}} - 
\underline{\boldsymbol{\epsilon}}(\boldsymbol{u}) & = 0 && \text{in $\Omega\subset\mathbb{R}^3$,}\\ 
\label{elasticity_2}
\nabla \cdot \underline{\boldsymbol{\sigma}} & = \boldsymbol{f} \quad && \text{in $\Omega$,}\\
\label{elasticity_3}
\boldsymbol{u} &= \boldsymbol{g} && \text{on $\partial \Omega$}.
 \end{alignat}
\end{subequations}
Here, the displacement is denoted by the vector field $\boldsymbol{u}:\Omega \rightarrow \mathbb{R}^{3}$.
The strain tensor is represented by $\underline{\boldsymbol{\epsilon}}(\boldsymbol{u}): = \frac{1}{2}(\nabla \boldsymbol{u} + (\nabla \boldsymbol{u})^{\top})$.
The stress tensor is represented by $\underline{\boldsymbol{\sigma}}: \Omega \rightarrow \boldsymbol{S}$, where 
$\boldsymbol{S}$ denotes the set of all symmetric matrices in $\mathbb{R}^{3\times 3}$.
The compliance tensor $\mathcal{A}$ is assumed to be a bounded, symmetric, positive definite tensor over $\boldsymbol{S}$.
The body force $\boldsymbol{f}$ lies in $\boldsymbol{L}^2(\Omega)$, the
displacement of the boundary $\boldsymbol{g}$ is a function in $\boldsymbol{H}^{1/2}(\partial\Omega)$ and $\Omega$ is a polyhedral domain.

In general, there are two approaches to design mixed finite element methods for linear elasticity.
The first approach is to enforce the symmetry of the stress tensor weakly (\cite{ArnoldBrezziDouglas84,ArnoldFalkWinther07,BoffiBrezziFortin09,
CockburnGopalakrishnanGuzman10,GopalakrishnanGuzman10,Guzman10,QiuDemko:2009:MME1,QiuDemko11,Stenberg88}). 
In this category, is included the HDG method considered in \cite{CockburnShi13_elas}.
The other approach is to exactly enforce the symmetry of the approximate stresses. 
The methods considered in
\cite{CockburnSchoetzauWang06,AdamsCockburn05,ArnoldAwanouWinther08,ArnoldAwanouWinther14,ArnoldWintherNC,
Awanou09,GopalakrishnanGuzman11,ManHuShi09,SoonCockburnStolarski09,Yi05,Yi06} 
belong to the second category, and so does the contribution of this paper. In general, the methods in the first category 
are easier to implement. On the other hand, the methods in the second category preserve the balance of angular momentum 
strongly and have less degrees of freedom.
Next, we compare our HDG method with several methods of the second category.

In \cite{CockburnSchoetzauWang06}, an LDG method using strongly symmetric
stresses (for isotropic linear elasticity) was introduced and proved to yield
convergence properties that remain unchanged when the material becomes
incompressible; simplexes and polynomial approximations or degree $k$ in all variables were
used. However, as all LDG methods for second-order elliptic problems,
although the displacement converges with order $k+1$, the strain and pressure converge
sub-optimally with order $k$. Also, the method cannot be hybridized. 
%\red{Inspired by this paper, we may modify the LDG scheme by using the polynomial space of degree $k+1$ for the displacement. 
%This enrichment may result optimal convergence for both unknowns. A solid analysis is still open for investigation. In addition, 
%regardless of optimal convergence property, the method may not be hybridized by the enrichment of the space. 
%A further modification on the numerical traces is needed in order to apply hybridization technique on the scheme.}
Stress finite elements satisfying both strong symmetry and $H(\text{div})$-conformity are introduced in 
\cite{AdamsCockburn05,ArnoldAwanouWinther08}. The main drawback of these methods is that 
they have too many degrees of freedom of stress elements and hybridization is not available for them 
(see detailed description in \cite{Guzman10}). In \cite{ArnoldAwanouWinther14,ArnoldWintherNC,
Awanou09,GopalakrishnanGuzman11,ManHuShi09,SoonCockburnStolarski09,Yi05,Yi06}, 
non-conforming methods using symmetric stress elements are introduced. But, 
methods in \cite{ArnoldAwanouWinther14,ArnoldWintherNC,
Awanou09,ManHuShi09,Yi05,Yi06} use low order finite element spaces only 
(most of them are restricted to rectangular or cubical meshes except \cite{ArnoldAwanouWinther14,ArnoldWintherNC}).
In \cite{GopalakrishnanGuzman11}, a family of simplicial elements (one for each
$k \geq 1$) are developed in both two and three dimensions.
(The degrees of freedom of $P_{k+1}(\boldsymbol{S},K)$ were studied in
\cite{GopalakrishnanGuzman11} and then used
to design the projection operator $\Pi^{(\text{div},\boldsymbol{S})}$ in \cite{GopaQiu:PracticalDPG}). 
However, the convergence rate of stress is suboptimal. The first HDG method for
linear and nonlinear elasticity was introduced in
\cite{SoonThesis08,SoonCockburnStolarski09}; see also the related HDG method
proposed in \cite{NguyenPeraire2012}. These methods also use simplexes and
polynomial approximations of degree $k$ in all variables. For general polyhedral
elements, this method was
recently analyzed in \cite{FuCockburnStolarski} where it was shown that the method converges optimally in the
displacement with order $k+1$, but with the suboptimal order of $k+1/2$ for the
pressure and the stress. For $k=1$, these orders of convergence were numerically
shown to be sharp for triangular elements. In this paper, we prove that
by enriching the local stress space to be polynomials of degree no more than
$k+1$, and by using a modified numerical trace, we are able to obtain optimal order of convergence for all unknowns. 
In addition, this analysis is valid for general polyhedral meshes. To the best of our knowledge, this is so far the only result 
which has optimal accuracy with general polyhedral triangulations for linear elasticity problems.

Like many hybrid methods, our HDG method provides approximation to stress and displacement in each element and trace of displacement along interfaces of meshes. In general, the corresponding finite element spaces are $\underline{\boldsymbol{V}}_h,\boldsymbol{W}_h,\boldsymbol{M}_h$, which are defined to be
\begin{alignat*}{3}
\underline{\boldsymbol{V}}_h=&\;\{\underline{\boldsymbol{v}} \in \underline{\boldsymbol{L}}^2(\Omega):
&&\quad\underline{\boldsymbol{v}}|_K\in \underline{\boldsymbol{V}}(K)  
&&\quad\forall\; K\in\Oh\},
\\
\boldsymbol{W}_h=&\;\{\boldsymbol{\omega}\in \boldsymbol{L}^2(\Omega):
&&\quad \boldsymbol{\omega}|_K \in \boldsymbol{W}(K)
&&\quad\forall\; K\in\Oh\},
\\
\boldsymbol{M}_h=&\;\{\boldsymbol{\mu}\in \boldsymbol{L}^2(\Eh):
&&\quad \boldsymbol{\mu}|_F\in \boldsymbol{M}(F) 
&&\quad\forall\; F\in\Eh\}.
\end{alignat*}
Here $\mathcal{T}_h$ denotes a triangulation of the domain $\Omega$ and $\Eh$ is the set of all faces $F$ of all elements $K \in \mathcal{T}_h$. The spaces
$\underline{\boldsymbol{V}}(K), \boldsymbol{W}(K), \boldsymbol{M}(F)$ are called the {\em local spaces} which are defined on each element/face. In Table \ref{table:error-order} we list several choices of local spaces for different methods. In this paper, our choice of the local spaces is defined as:
\[
\underline{\boldsymbol{V}}(K) = \underline{\boldsymbol{P}}_k(\boldsymbol{S}, K), \quad  \boldsymbol{W}(K) = \boldsymbol{P}_{k+1}(K), \quad  \boldsymbol{M}(F) =\boldsymbol{P}_k(F).
\]
Here, the space of vector-valued functions defined on $D$  whose entries
are polynomials of total degree $k$ is denoted by $\boldsymbol{P}_k(D)$ ($k\geq 1$). Similarly,
$\underline{\boldsymbol{P}}_k(\boldsymbol{S}, K)$ denotes the space of
symmetric-valued functions defined on $K$ whose entries are polynomials of total degree $k$. 
In addition, our method allows $\mathcal{T}_{h}$ to be any conforming polyhedral triangulation of $\Omega$.

Note the fact that the only globally-coupled degrees of freedom are those of the numerical
trace of displacement along $\Eh$, renders the method efficiently
implementable. However, the fact that the polynomial degree of the approximate
numerical traces of the displacement is one {\em less} than that of the
approximate displacement inside the elements, might cause a degradation in the
approximation properties of the displacement. However, this unpleasant situation
is avoided altogether by taking a special form
of the numerical trace of the stresses inspired on the choice taken in
\cite{Lehrenfeld10} in the framework of diffusion problems. This choice allows
for a special superconvergence of part of the numerical traces of the stresses
which, in turn, guarantees that, for $k\ge1$, 
the $L^{2}$-order of convergence for the stress is $k+1$ and that of the displacement $k+2$.
So, we obtain optimal convergence for both stress and displacement for general
polyhedral elements.
Let us mention that the approach of error analysis of our HDG method is 
different from the traditional projection-based error analysis in
\cite{CockburnGopalakrishnanSayas10,CockburnQiuShi11,CockburnShi13_elas} in
three aspects.
First, here, we use simple $L^{2}$-projections, not the numerical trace-tailored
projections typically used for the analysis of other HDG methods. Second, we
take the stabilization parameter to be of order $1/h$ instead of of order
one. And finally, we use an elementwise Korn's inequality
(Lemma~\ref{symmetric_grad}) to deal with the symmetry of the stresses.  

We notice that mixed methods in \cite{CockburnGopalakrishnanGuzman10,GopalakrishnanGuzman10} and 
HDG methods in \cite{CockburnShi13_elas} also achieve optimal convergence for stress and
superconvergence for displacement by post processing. However, there are two disadvantages regarding of implementation.
First, these methods enforce the stress symmetry weakly, which means that they have a much larger space for the stress.
In additon, these methods usually need to add matrix bubble functions ($\underline{\boldsymbol{\delta V}}$ in  \cite{CockburnGopalakrishnanGuzman10}) into their stress elements in order to obtain optimal approximations. 
In fact, the construction of such bubbles on general polyhedral elements is still an open problem.
In contrast, our method avoids using matrix bubble functions but only use simple polynomial space of degree $k, k+1$. In Table~\ref{table:error-order}, we compare methods which use $\boldsymbol{M}_{h}$ for approximating  trace of displacement $\widehat{\boldsymbol{u}}_{h}$ on $\Eh$. There, $\boldsymbol{u}^{\star}_h$ is a post-processed numerical solution of
displacement.

\begin{table}[ht]
\caption{Orders of convergence for methods for which {$\widehat{\boldsymbol{u}}_{h}\in \boldsymbol{M}(F)=\boldsymbol{P}_k(F), k\ge1,$} 
and $K$ is a tetrahedron.}
\centering
\begin{tabular}{c c c c c c c}
\hline
\noalign{\smallskip}
method             &$ \underline{\boldsymbol{V}}(K)$ 
&\hskip-.5truecm$\boldsymbol{W}(K)$ 
&\hskip-.3truecm$\|\underline{\boldsymbol{\sigma}} - \underline{\boldsymbol{\sigma}}_h\|_{\Oh}$
&\hskip-.3truecm$\|\ \boldsymbol{u} - \boldsymbol{u}_h\|_{\Oh}$
&\hskip-.3truecm$\|\boldsymbol{u} - \boldsymbol{u}^{\star}_h\|_{\Oh}$ \\
\noalign{\smallskip}
\hline\hline
\noalign{\smallskip}
AFW\cite{ArnoldFalkWinther07} &  $\underline{\boldsymbol{P}}_{k}(\mathbb{R}^{3\times 3}, K)$ &\hskip-.2truecm  
$\boldsymbol{P}_{k-1}(K)$&     \hskip-.5truecm   $k$          &\hskip-.5truecm $k$  & \hskip-.5truecm - \\
CGG\cite{CockburnGopalakrishnanGuzman10}&  $\underline{\mathbf{RT}}_{k}(K)+\underline{\boldsymbol{\delta V}}$ &\hskip-.2truecm  $\boldsymbol{P}_{k}(K)$&     \hskip-.5truecm   $k+1$          &\hskip-.5truecm $k+1$  & \hskip-.5truecm$k+2$             \\
GG\cite{GopalakrishnanGuzman10}   &  $\underline{\boldsymbol{P}}_{k}(\mathbb{R}^{3\times 3},K)+\underline{\boldsymbol{\delta V}}$ &\hskip-.2truecm  $\boldsymbol{P}_{k-1}(K)$  &    \hskip-.5truecm   $k+1$          &\hskip-.5truecm $k$   &\hskip-.5truecm $k+1$            \\
CS\cite{CockburnShi13_elas}     &   $\underline{\boldsymbol{P}}_{k}(\mathbb{R}^{3\times 3},K)+\underline{\boldsymbol{\delta V}}$ &\hskip-.2truecm  $\boldsymbol{P}_{k}(K)$ &      \hskip-.5truecm   $k+1$          &\hskip-.5truecm $k+1$   &\hskip-.5truecm $k+2$               \\
GG\cite{GopalakrishnanGuzman11}     &  $\underline{\boldsymbol{P}}_{k+1}(\boldsymbol{S},K)$&\hskip-.2truecm $\boldsymbol{P}_{k}(K)$&        \hskip-.5truecm $k$         & \hskip-.5truecm$k+1$ &\hskip-.5truecm -                \\
HDG-S    &  $\underline{\boldsymbol{P}}_{k}(\boldsymbol{S},K)$ &\hskip-.2truecm $\boldsymbol{P}_{k+1}(K)$ &    \hskip-.5truecm     $k+1$          &\hskip-.5truecm $k+2$ &\hskip-.5truecm -
\\
\noalign{\smallskip}
\hline
\end{tabular}
\label{table:error-order}  
\end{table}

The remainder of this paper is organized as follows. In Section $2$, we introduce our HDG method and present our a priori error estimates.
In Section $3$, we give a characterization of the HDG method and show the global matrix is symmetric and positive definite.
In Section $4$, we give elementwise Korn's inequality in Lemma~\ref{symmetric_grad}, then provide a detailed proof of the a priori error estimates.
In Section $5$, we present several numerical examples in order to illustrate and test our method.
\section{Main results} In this section we first present the method in details and then show the main results for the error estimates.

\subsection{The HDG formulation with strong symmetry} Let us begin by introducing some notations and conventions. 
We adapt to our setting the notation used in \cite{CockburnQiuShi11}. Let 
$\mathcal{T}_h$ denote a conforming triangulation of $\Omega$
made of shape-regular polyhedral elements $K$. We recall that $\partial \mathcal{T}_h :=
\{\partial K : K \in \mathcal{T}_h \}$, and $\mathcal{E}_h$ 
denotes the set of all faces $F$ of all elements. We
denote by $\mathcal{F}(K)$ the set of all faces $F$ of the element $K$. We also use the standard 
notation to denote scalar, vector and tensor spaces. Thus, if $D(K)$ denotes
a space of scalar-valued functions defined on $K$, the corresponding space of
vector-valued functions is
$\boldsymbol{D}(K) := [D(K)]^d$ and the corresponding space of matrix-valued
functions is 
$\underline{\boldsymbol{D}}(K) := [D(K)]^{d\times d}$. Finally, $\underline{\boldsymbol{D}}(\boldsymbol{S}, K)$ denotes the symmetric  subspace of $\underline{\boldsymbol{D}}(K)$.

The methods we consider seek an approximation $(\underline{\boldsymbol{\sigma}}_h, \boldsymbol{u}_h, \widehat{\boldsymbol{u}}_h)$
to the exact solution $(\underline{\boldsymbol{\sigma}}, \boldsymbol{u},
 \boldsymbol{u}|_{\Eh})$ in the finite dimensional space 
$\underline{\boldsymbol{V}}_h \times \boldsymbol{W}_h \times \boldsymbol{M}_h \subset \underline{\boldsymbol{L}}^2(\boldsymbol{S}, \Omega)
\times \boldsymbol{L}^2(\Omega) \times \boldsymbol{L}^2(\Eh)$ given by
\begin{subequations}
\label{spaces}
\begin{alignat}{3}
\label{spaces-V}
\underline{\boldsymbol{V}}_h=&\;\{\underline{\boldsymbol{v}} \in \underline{\boldsymbol{L}}^2(\boldsymbol{S}, \Omega):
&&\quad\underline{\boldsymbol{v}}|_K\in \underline{\boldsymbol{P}}_k(\boldsymbol{S}, K)
&&\quad\forall\; K\in\Oh\},
\\
\label{spaces-W}
\boldsymbol{W}_h=&\;\{\boldsymbol{\omega}\in \boldsymbol{L}^2(\Omega):
&&\quad \boldsymbol{\omega}|_K \in \boldsymbol{P}_{k+1}(K)
&&\quad\forall\; K\in\Oh\},
\\
\label{spaces-M}
\boldsymbol{M}_h=&\;\{\boldsymbol{\mu}\in \boldsymbol{L}^2(\Eh):
&&\quad \boldsymbol{\mu}|_F\in \boldsymbol{P}_k(F)
&&\quad\forall\; F\in\Eh\}.
\end{alignat}
\end{subequations}
Here $P_k(D)$ denotes the standard space of polynomials of degree no more than $k$ on $D$. Here we require $k \ge 1$. 

The numerical approximation $(\underline{\boldsymbol{\sigma}}_h, \boldsymbol{u}_h, \widehat{\boldsymbol{u}}_h)$ can now be defined as the solution of the following system:
\begin{subequations}\label{HDG_formulation}
\begin{alignat}{1}
\bint{\mathcal{A} \underline{\boldsymbol{\sigma}}_h}{\underline{\boldsymbol{v}}} + \bint{\boldsymbol{u}_h}{\nabla \cdot \underline{\boldsymbol{v}}}
- \bintEh{\widehat{\boldsymbol{u}}_h}{\underline{\boldsymbol{v}} \n} &= 0, \\
\bint{\underline{\boldsymbol{\sigma}}_h}{\nabla \boldsymbol{\omega}} - \bintEh{\widehat{\underline{\boldsymbol{\sigma}}}_h \n}{\boldsymbol{\omega}} & = -\bint{\boldsymbol{f}}{\boldsymbol{\omega}},\\
\label{strong_cont}
\bintEhi{\widehat{\underline{\boldsymbol{\sigma}}}_h \n}{\boldsymbol{\mu}} & = 0,\\
\bintEhb{\widehat{\boldsymbol{u}}_h}{\boldsymbol{\mu}} & = \bintEhb{{\boldsymbol{g}}}{\boldsymbol{\mu}},
\intertext{for all $(\underline{\boldsymbol{v}}, \boldsymbol{\omega}, \boldsymbol{\mu}) \in
\underline{\boldsymbol{V}}_h \times \boldsymbol{W}_h \times
\boldsymbol{M}_h$, where}
\label{numerical_trace}
\widehat{\underline{\boldsymbol{\sigma}}}_h \n = \underline{\boldsymbol{\sigma}}_h \n - \tau (\Pim \boldsymbol{u}_h - \widehat{\boldsymbol{u}}_h) \quad \text{on $\partial \Oh$.}
\end{alignat} 
\end{subequations}
In fact, in Christoph Lehrenfeld's thesis, the author defines the numerical flux in this way for diffusion problems (see Remark $1.2.4$ in \cite{Lehrenfeld10}). 
This method was then analyzed for diffusion recently in \cite{Oikawa2015}.
Here, $\Pim$ denotes the standard $L^2$-orthogonal projection from $\boldsymbol{L}^2(\Eh)$ onto $\boldsymbol{M}_h$. We write
 $\bint{\underline{\boldsymbol{\eta}}}{\underline{\boldsymbol{\zeta}}}$ $ : = \sum^n_{i,j = 1} \bint{\underline{\boldsymbol{\eta}}_{i,j}}{\underline{\boldsymbol{\zeta}}_{i,j}}$,  $\bint{\boldsymbol{\eta}}{\boldsymbol{\zeta}} : = \sum^n_{i = 1} \bint{\eta_i}{\zeta_i}$, and
$
\bint{\eta}{\zeta} := \sum_{K \in \Oh} (\eta, \zeta)_K,
$
where $(\eta,\zeta)_D$ denotes the integral of $\eta\zeta$ over $D \subset \mathbb{R}^n$. Similarly, we write $\bintEh{\boldsymbol{\eta}}{\boldsymbol{\zeta}}:= \sum^n_{i=1} \bintEh{\eta_i}{\zeta_i}$
and $\bintEh{\eta}{\zeta}:= \sum_{K \in \Oh} \langle \eta \,,\,\zeta \rangle_{\partial K}$,
where $\langle \eta \,,\,\zeta \rangle_{D}$ denotes the integral of $\eta \zeta$ over $D \subset \mathbb{R}^{n-1}$. 

The parameter $\tau$ in \eqref{numerical_trace} is called the {\em{stabilization parameter}}. In this paper, we assume it is a fixed positive number on all faces. It is worth to mention that the numerical trace \eqref{numerical_trace} is defined slightly different from the usual {{HDG}} 
setting, see \cite{CockburnQiuShi11}. Namely, in the definition, we use $\Pim \boldsymbol{u}_h$ instead of $\boldsymbol{u}_h$. Indeed, this is a crucial modification in order to 
get error estimate. An intuitive explanation is that we want to preserve the strong continuity of the flux across the interfaces. Without the projection $\Pim$, by \eqref{strong_cont} the normal component of $\widehat{\underline{\boldsymbol{\sigma}}}_h$ is only weakly continuous across the interfaces.

\subsection{A priori error estimates}
To state our main result, we need to introduce some notations.
We define 
\begin{equation*}
\Vert \underline{\boldsymbol{v}}\Vert_{L^{2}(\mathcal{A},\Omega)} = \sqrt{(\mathcal{A}\underline{\boldsymbol{v}}, \underline{\boldsymbol{v}})_{\Omega}}, 
\quad \forall \underline{\boldsymbol{v}} \, \in \underline{\boldsymbol{L}}^{2}(\boldsymbol{S},\Omega).
\end{equation*}
We use $\|\cdot\|_{s, D}, |\cdot|_{s, D}$ to denote the usual norm and semi-norm on
the Sobolev space $H^s(D)$. 
We discard the first index $s$ if $s=0$. A differential operator with a sub-index $h$ means it is defined on each element $K \in \Oh$. Similarly, the norm $\|\cdot\|_{s, \Oh}$ is the discrete norm defined as $\|\cdot\|_{s, \Oh}:= \sum_{K \in \Oh}\|\cdot\|_{s, K}$.
Finally, we need an elliptic regularity assumption stated as follows.
Let $(\boldsymbol{\phi}, \underline{\boldsymbol{\psi}}) \in \boldsymbol{H}^2(\Omega) \times \underline{\boldsymbol{H}}^1(\Omega)$ be the solution of the adjoint problem:
\begin{subequations}\label{dual_problem}
\begin{alignat}{2}
\label{dual_problem_1}
\mathcal{A}\underline{\boldsymbol{\psi}} - \underline{\epsilon} (\boldsymbol{\phi}) & = 0 && \text{in $\Omega$},\\
\label{dual_problem_2}
\nabla \cdot \underline{\boldsymbol{\psi}} & = \eu \quad && \text{in $\Omega$},\\
\label{dual_problem_3}
\boldsymbol{\phi} & = 0 && \text{on $\partial \Omega$}.
\end{alignat}
\end{subequations}
We assume the solution
$(\boldsymbol{\phi}, \underline{\boldsymbol{\psi}})$ has the following 
elliptic regularity property:
\begin{equation}\label{regularity}
\|\underline{\boldsymbol{\psi}}\|_{1, \Omega} + \|\boldsymbol{\phi}\|_{2, \Omega} \le C_{reg} \|\eu\|_{\Omega},
\end{equation}
The assumption holds in the case of planar elasticity with scalar coefficients on
a convex domain, see \cite{BacutaBramble03}.

We are now ready to state our main result.

\begin{theorem}\label{Main_result}
If the meshes are quasi-uniform and $\tau = \mathcal{O}(\frac{1}{h})$, 
then we have
\begin{equation}\label{error_estimate_sigma}
\|\underline{\boldsymbol{\sigma}} - \underline{\boldsymbol{\sigma}}_h\|_{L^{2}(\mathcal{A},\Omega)} \le C h^s (\|\boldsymbol{u}\|_{s+1, \Omega} + \|\underline{\boldsymbol{\sigma}}\|_{s, \Omega}),
\end{equation}  
for all $1 \le s \le k+1$. Moreover, if the elliptic regularity property \eqref{regularity} holds, then we have
\begin{equation}
\label{error_estimate_u} \| \boldsymbol{u} - \boldsymbol{u}_h\|_{\Omega} \le C h^{s+1}(\|\boldsymbol{u}\|_{s+1, \Omega} + \|\underline{\boldsymbol{\sigma}}\|_{s, \Omega}), 
\end{equation}
for all $1 \le s \le k+1$. Here the constant $C$ depends on the upper bound of compliance tensor $\mathcal{A}$ 
but it is independent of the mesh size $h$.
\end{theorem}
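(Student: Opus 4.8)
The plan is to follow the standard HDG energy-argument skeleton but with the twist the introduction advertises: use plain $L^2$-projections $\Piv$, $\Piw$, $\Pim$ (not numerical-trace-tailored ones), take $\tau = \mathcal O(1/h)$, and close the symmetry gap with an elementwise Korn inequality. First I would introduce the projection errors $\esigma := \Piv\underline{\boldsymbol\sigma} - \underline{\boldsymbol\sigma}_h$, $\eu := \Piw\boldsymbol u - \boldsymbol u_h$, $\euhat := \Pim(\boldsymbol u|_{\Eh}) - \widehat{\boldsymbol u}_h$, and $\esigmahat\n := (\underline{\boldsymbol\sigma}\n - \tau(\Pim\boldsymbol u - \widehat{\boldsymbol u}_h))$-type residual, and derive the error equations by subtracting the HDG system \eqref{HDG_formulation} from the consistency identities satisfied by the exact solution. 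Because $\Pim$ appears inside the numerical trace \eqref{numerical_trace}, the term $\bintEh{\Pim\boldsymbol u_h - \widehat{\boldsymbol u}_h}{\cdot}$ interacts cleanly with the $L^2$-projections; this is exactly the point of the modification. The first milestone is the energy identity: choosing $(\underline{\boldsymbol v},\boldsymbol\omega,\boldsymbol\mu) = (\esigma, \eu, \euhat)$ and adding the equations, the volume couplings telescope and one is left with
\begin{equation*}
\|\esigma\|_{L^2(\mathcal A,\Omega)}^2 + \tau\,\bintEh{\Pim\eu - \euhat}{\Pim\eu - \euhat} = (\text{projection-error terms}).
\end{equation*}

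Next I would bound the right-hand side. The delicate term is the one coming from $\bint{\deltasigma}{\nabla\eu}$ and the boundary mismatch $\bintEh{\deltasigma\,\n}{\Pim\eu - \euhat}$; here I would integrate by parts, use that $\deltasigma$ is $L^2$-orthogonal to $\underline{\boldsymbol P}_k(\boldsymbol S,K) \supseteq \underline{\boldsymbol\epsilon}_h(\boldsymbol W_h)$ — wait, $\underline{\boldsymbol\epsilon}_h(\boldsymbol\omega)$ for $\boldsymbol\omega\in\boldsymbol P_{k+1}(K)$ lies in $\underline{\boldsymbol P}_k(\boldsymbol S,K)$, so that orthogonality is genuine and kills the leading volume term — and then control the face terms with the trace/inverse inequality, absorbing $\tau^{-1}\|\deltasigma\n\|^2 \lesssim h\,\|\deltasigma\n\|^2 \lesssim \|\deltasigma\|^2 + h^2|\deltasigma|_1^2$ by quasi-uniformity and the choice $\tau \sim 1/h$. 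Together with approximation estimates for $\Piv,\Piw$ this yields the stress bound \eqref{error_estimate_sigma}. Crucially, to convert the energy control of $\Pim\eu - \euhat$ into control of the jump $\eu - \euhat$ needed later, and ultimately to bound $\|\esigma\|$ itself when $\mathcal A$ degenerates or when estimating $\nabla_h\eu$, I would invoke the elementwise Korn inequality of Lemma~\ref{symmetric_grad}: it controls $\|\nabla_h\eu\|$ by $\|\underline{\boldsymbol\epsilon}_h(\eu)\|$ plus boundary jump terms, and $\underline{\boldsymbol\epsilon}_h(\eu)$ is in turn tied to $\esigma$ through the first error equation. This is the step that genuinely uses strong symmetry of the stress space.

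For the displacement estimate \eqref{error_estimate_u} I would run the usual Aubry–Nitsche duality argument: let $(\boldsymbol\phi,\underline{\boldsymbol\psi})$ solve the adjoint problem \eqref{dual_problem} with data $\eu$, apply the error equations with the HDG-consistent test data built from $\Piv\underline{\boldsymbol\psi}$, $\Piw\boldsymbol\phi$, $\Pim(\boldsymbol\phi|_{\Eh})$, and identify $\|\eu\|^2$ with a sum of products of projection errors of $(\boldsymbol\phi,\underline{\boldsymbol\psi})$ against the already-estimated errors of $(\underline{\boldsymbol\sigma},\boldsymbol u)$. Elliptic regularity \eqref{regularity} supplies $\|\underline{\boldsymbol\psi}\|_{1,\Omega} + \|\boldsymbol\phi\|_{2,\Omega} \lesssim \|\eu\|$, and one gains one extra power of $h$ from each adjoint projection error, giving $h^{s+1}$. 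The genuine superconvergence input — and the main obstacle — is showing that $\Pim\eu - \euhat$ (the projected jump) is one order smaller than a naive estimate would give; this is the "special superconvergence property of the numerical traces" mentioned in the abstract, and it hinges on testing the error equations against carefully chosen $\boldsymbol\mu\in\boldsymbol M_h$ and on the $\tau\sim 1/h$ scaling making the stabilization term strong enough. I expect the bookkeeping of face terms under the mismatched polynomial degrees ($\boldsymbol u_h$ of degree $k+1$ but $\widehat{\boldsymbol u}_h$ of degree $k$) — keeping track of when $\Pim$ can be inserted or removed for free — to be where most of the technical care goes, together with verifying Korn's inequality has the right $h$-dependence so it does not destroy the optimal rate.
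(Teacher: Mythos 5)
Your skeleton coincides with the paper's: the same $L^2$-projection error quantities $\esigma,\eu,\euhat$, the same energy identity obtained by testing the error equations with $(\esigma,\eu,\euhat)$, an elementwise Korn inequality to exploit the strong symmetry of the stress space, and a duality argument whose extra power of $h$ comes from the superconvergence of $\tau^{\frac12}(\Pim\eu-\euhat)$ under the scaling $\tau\sim 1/h$. The displacement part of your plan matches Proposition~\ref{duality_argument} and Corollary~\ref{estimate_u} essentially verbatim.

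There is, however, one concrete gap in the stress estimate. The face term produced by the energy identity is $T_1=\bintEh{\eu-\euhat}{(\deltasigma)\n}$, with $\eu\in\boldsymbol{P}_{k+1}(K)$ and \emph{not} $\Pim\eu$; your proposed treatment (Cauchy--Schwarz against the stabilization term and absorbing $\tau^{-1}\|(\deltasigma)\n\|^2$) only disposes of the piece paired with $\Pim\eu-\euhat$. The remaining piece $\bintEh{\eu-\Pim\eu}{(\deltasigma)\n}$ is not controlled by $\|\tau^{\frac12}(\Pim\eu-\euhat)\|_{\partial\Oh}$ at all, and a crude trace bound on $\eu-\Pim\eu$ loses a full power of $h$. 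The fix is not simply ``invoke Korn'': one must first use that $(\Piv\underline{\boldsymbol{\sigma}})\n|_F$ and $\Pim(\underline{\boldsymbol{\sigma}}\n)|_F$ lie in $\boldsymbol{M}(F)$ to replace the stress factor by $\underline{\boldsymbol{\sigma}}\n-\Pim(\underline{\boldsymbol{\sigma}}\n)\perp\boldsymbol{M}(F)$, then insert for free an arbitrary rigid motion $\boldsymbol{\Lambda}\in\Upsilon_h$ (possible precisely because $\Upsilon_h|_F\subset\boldsymbol{M}(F)$ when $k\ge1$) together with the elementwise mean, apply a Poincar\'e inequality to trade the boundary norm for $h\|\nabla(\eu+\boldsymbol{\Lambda})\|_K$, and only then minimize over $\boldsymbol{\Lambda}$ using Lemma~\ref{symmetric_grad} to land on $h\|\underline{\boldsymbol{\epsilon}}(\eu)\|_K$. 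Note that Lemma~\ref{symmetric_grad} controls the gradient only modulo rigid motions, not ``$\|\nabla_h\eu\|$ by $\|\underline{\boldsymbol{\epsilon}}_h(\eu)\|$ plus boundary jump terms'' as you state. Finally, the resulting bound is circular ($\|\esigma\|$ is estimated in terms of $\|\underline{\boldsymbol{\epsilon}}(\eu)\|_{\Oh}$ and conversely); the paper closes the loop by testing \eqref{error_equation_1} with $\underline{\boldsymbol{v}}=\underline{\boldsymbol{\epsilon}}(\eu)$ (Lemma~\ref{estimate_gradu}) followed by a Young-inequality absorption. You gesture at this (``tied to $\esigma$ through the first error equation''), but these steps are where the proof actually lives, and as written your plan for the critical face term would not close at the optimal rate.
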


This result shows that the numerical errors for both unknowns $(\boldsymbol{u}, \underline{\boldsymbol{\sigma}})$ are optimal. 
In addition, since the only globally-coupled unknown, $\widehat{\boldsymbol{u}}_{h}$,
stays in $\boldsymbol{P}_{k}(\Eh)$, the order of convergence for the
displacement remains optimal only because of a key superconvergence
property, see the remark right after Corollary \ref{estimate_sigma}.
In addition, we restrict our result on quasi-uniform meshes to make the proof simple and clear. 
This result holds for shape-regular meshes also.

\subsection{Numerical approximation for nearly incompressible materials}
Here, we consider the numerical approximation of stress for isotropic nearly incompressible materials. 

We define isotropic materials to be those whose compliance tensor satisfying the following Assumption~\ref{ass_isotropic}.
\begin{assumption}
\label{ass_isotropic}
\begin{align}
\label{ass_isotropic_def}
 \mathcal{A}\underline{\boldsymbol{\tau}} &= P_{D}\underline{\boldsymbol{\tau}}_{D} 
+ P_{T}\frac{\text{tr}(\underline{\boldsymbol{\tau}})}{3}I_{3}\\
\nonumber
\text{where } \quad  \underline{\boldsymbol{\tau}}_{D} & = \underline{\boldsymbol{\tau}} 
-\frac{\text{tr}(\underline{\boldsymbol{\tau}})}{3}I_{3}, 
\end{align}
for any $\underline{\boldsymbol{\tau}}$ in $\mathbb{R}^{3\times 3}$, and $P_{D}$ and $P_{T}$ are two positive constants. An isotropic material is nearly incompressible if $P_{T}$ is close to zero.

\end{assumption}

\begin{theorem}
\label{thm_locking_free}
If the material is isotropic (whose compliance tensor satisfies Assumption~\ref{ass_isotropic}), $P_{T}$ is positive, 
the boundary data $\boldsymbol{g}=0$, the meshes are quasi-uniform and $\tau = \mathcal{O}(\frac{1}{h})$, then we have 
\begin{equation}
\label{conv_locking_free}
\|\underline{\boldsymbol{\sigma}} - \underline{\boldsymbol{\sigma}}_h\|_{L^{2}(\Omega)}
 \le C h^{s} (\|\boldsymbol{u}\|_{s+1, \Omega} + \|\underline{\boldsymbol{\sigma}}\|_{s, \Omega}),
\end{equation}
for all $1 \le s \le k+1$. Here, the constant $C$ is independent of $P_{T}^{-1}$.
\end{theorem}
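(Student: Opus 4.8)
The overall plan is to split the $L^{2}$-stress error into its deviatoric and spherical (pressure) parts, dispose of the deviatoric part by Theorem~\ref{Main_result}, and control the pressure part by an inf--sup (divergence right-inverse) argument that recycles the energy bounds underlying Theorem~\ref{Main_result}; no elliptic regularity is needed.

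\emph{Reduction.} For the isotropic tensor of Assumption~\ref{ass_isotropic}, every symmetric $\underline{\boldsymbol{v}}$ satisfies the orthogonal splittings $\|\underline{\boldsymbol{v}}\|_{L^{2}(\mathcal{A},\Omega)}^{2}=P_{D}\|\underline{\boldsymbol{v}}_{D}\|_{\Omega}^{2}+\tfrac{P_{T}}{3}\|\text{tr}\,\underline{\boldsymbol{v}}\|_{\Omega}^{2}$ and $\|\underline{\boldsymbol{v}}\|_{L^{2}(\Omega)}^{2}=\|\underline{\boldsymbol{v}}_{D}\|_{\Omega}^{2}+\tfrac13\|\text{tr}\,\underline{\boldsymbol{v}}\|_{\Omega}^{2}$. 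Since the constant in \eqref{error_estimate_sigma} depends only on the upper bound of $\mathcal{A}$, i.e. on $\max\{P_{D},P_{T}\}$, which stays bounded as $P_{T}\to0^{+}$, estimate \eqref{error_estimate_sigma} already yields $\|(\underline{\boldsymbol{\sigma}}-\underline{\boldsymbol{\sigma}}_{h})_{D}\|_{\Omega}\le C h^{s}(\|\boldsymbol{u}\|_{s+1,\Omega}+\|\underline{\boldsymbol{\sigma}}\|_{s,\Omega})$ with $C$ independent of $P_{T}^{-1}$. Hence the whole problem reduces to bounding the pressure error $p-p_{h}$ (with $p=-\tfrac13\text{tr}\,\underline{\boldsymbol{\sigma}}$, $p_{h}=-\tfrac13\text{tr}\,\underline{\boldsymbol{\sigma}}_{h}$) in $L^{2}(\Omega)$ by $Ch^{s}(\cdots)$, uniformly in $P_{T}^{-1}$. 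Writing $\Pi_{k}$ for the elementwise $L^{2}$-projection onto piecewise polynomials of degree $\le k$, one has $\text{tr}\,\Piv\underline{\boldsymbol{\sigma}}=\Pi_{k}(\text{tr}\,\underline{\boldsymbol{\sigma}})$, so by the triangle inequality and approximation it suffices to estimate $\pi_{h}:=\Pi_{k}p-p_{h}$. Finally $\boldsymbol{g}=0$ gives $\int_{\Omega}\text{tr}\,\underline{\boldsymbol{\sigma}}=\int_{\partial\Omega}\boldsymbol{u}\cdot\n=0$ and, testing the first equation of \eqref{HDG_formulation} with $\underline{\boldsymbol{v}}=I_{3}$, also $\int_{\Omega}\text{tr}\,\underline{\boldsymbol{\sigma}}_{h}=0$; thus $\pi_{h}$ has zero mean on $\Omega$.

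\emph{The pressure estimate.} I would use a bounded right inverse of the divergence to pick $\boldsymbol{z}\in\boldsymbol{H}^{1}_{0}(\Omega)$ with $\nabla\cdot\boldsymbol{z}=\pi_{h}$ and $\|\boldsymbol{z}\|_{1,\Omega}\le C\|\pi_{h}\|_{\Omega}$, so that $\text{tr}\,\underline{\boldsymbol{\epsilon}}(\boldsymbol{z})=\pi_{h}$ and $\|\pi_{h}\|_{\Omega}^{2}=\bint{\pi_{h}}{\nabla\cdot\boldsymbol{z}}$. Testing the four equations of \eqref{HDG_formulation} (in error form) with a suitable triple built from $\boldsymbol{z}$, such as $(\Piv\underline{\boldsymbol{\epsilon}}(\boldsymbol{z}),\Piw\boldsymbol{z},\Pim\boldsymbol{z})$, and rearranging, one should express $\|\pi_{h}\|_{\Omega}^{2}$ as the sum of: (i) the spherical part of the $\mathcal{A}$-pairing, which works out to $-P_{T}\|\pi_{h}\|_{\Omega}^{2}$ (up to harmless lower-order terms) and is moved back to the left-hand side since $P_{T}>0$; (ii) the deviatoric part of the $\mathcal{A}$-pairing, bounded by $C\|(\underline{\boldsymbol{\sigma}}-\underline{\boldsymbol{\sigma}}_{h})_{D}\|_{\Omega}\|\pi_{h}\|_{\Omega}$ with $P_{T}$-independent $C$, hence of order $h^{s}(\cdots)\|\pi_{h}\|_{\Omega}$ by the Reduction step; (iii) $L^{2}$-projection residuals of $\underline{\boldsymbol{\sigma}}$, $\boldsymbol{f}$ and $\boldsymbol{z}$, all of order $h^{s}(\cdots)\|\pi_{h}\|_{\Omega}$ once $\|\boldsymbol{z}\|_{1,\Omega}\le C\|\pi_{h}\|_{\Omega}$ is used; and (iv) boundary terms over $\partial\Oh$ involving the numerical normal stress $\widehat{\underline{\boldsymbol{\sigma}}}_{h}\n$ and displacement projection residuals.

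\emph{The main obstacle.} The crux is step (iv). Three features of the scheme must be exploited. First, by the conservativity condition \eqref{strong_cont} together with $\widehat{\underline{\boldsymbol{\sigma}}}_{h}\n\in\boldsymbol{M}(F)=\boldsymbol{P}_{k}(F)$, the numerical normal stress is single-valued across interior faces, so its pairing with the single-valued, $\partial\Omega$-vanishing field $\boldsymbol{z}$ cancels, and only its pairing with the $H^{1}$-projection residual $\Piw\boldsymbol{z}-\boldsymbol{z}$ survives. Second, the modified numerical trace \eqref{numerical_trace}, which uses $\Pim\boldsymbol{u}_{h}$ rather than $\boldsymbol{u}_{h}$, confines the stabilization contribution to $\boldsymbol{P}_{k}(F)$, so it is effectively tested against $\Pim(\Piw\boldsymbol{z}-\boldsymbol{z})$, again a projection residual of $\boldsymbol{z}$. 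Third, the energy argument used to prove Theorem~\ref{Main_result} also controls $\|\tau^{1/2}(\Pim\boldsymbol{u}_{h}-\widehat{\boldsymbol{u}}_{h})\|_{\partial\Oh}\le C h^{s}(\cdots)$, with $C$ independent of $P_{T}^{-1}$ by the same reasoning as in the Reduction step. The delicate point is the power counting: because $\tau=\mathcal{O}(h^{-1})$, each stabilization term closes only because the quantity it multiplies is an \emph{elementwise} projection residual of $\boldsymbol{z}$, which a Bramble--Hilbert/scaling argument shows to be $\mathcal{O}(h^{1/2}\|\nabla\boldsymbol{z}\|_{K})$ on each face $\partial K$, exactly compensating the factor $\tau^{1/2}$; and the elementwise Korn inequality of Lemma~\ref{symmetric_grad} is precisely what lets one bound elementwise full gradients of the test fields (and ultimately $\|\pi_{h}\|_{\Omega}$) by symmetric gradients --- a step forced by the fact that $\underline{\boldsymbol{V}}_{h}$ consists only of symmetric tensors. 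I expect this $h$-bookkeeping, and the joint use of all four error equations to eliminate the non-residual part of the stabilization coupling, to be the principal difficulty. Assembling (i)--(iv) one obtains $(1+P_{T})\|\pi_{h}\|_{\Omega}^{2}\le C h^{s}(\|\boldsymbol{u}\|_{s+1,\Omega}+\|\underline{\boldsymbol{\sigma}}\|_{s,\Omega})\|\pi_{h}\|_{\Omega}$, hence $\|\pi_{h}\|_{\Omega}\le C h^{s}(\cdots)$; combined with $\|p-\Pi_{k}p\|_{\Omega}\le C h^{s}\|\underline{\boldsymbol{\sigma}}\|_{s,\Omega}$ and the deviatoric bound of the Reduction step, this gives \eqref{conv_locking_free}.
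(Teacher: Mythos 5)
Your proposal follows essentially the same route as the paper's proof: the deviatoric/spherical splitting with the deviatoric part absorbed by Theorem~\ref{Main_result} (whose constant is indeed independent of $P_T^{-1}$), the zero-mean property of $\mathrm{tr}(\esigma)$ obtained by testing with $I_3$ and using $\boldsymbol{g}=0$, the Brezzi--Fortin inf--sup bound (your divergence right-inverse is an equivalent formulation), and the treatment of the face terms via the conservativity condition \eqref{strong_cont} together with the superconvergence of $\|\tau^{1/2}(\Pim\boldsymbol{u}_h-\widehat{\boldsymbol{u}}_h)\|_{\partial\Oh}$ from Corollary~\ref{estimate_sigma}. The only cosmetic difference is that the paper tests just the momentum error equation \eqref{error_equation_2} with $\Piw\boldsymbol{\eta}$ after two elementwise integrations by parts (so no $-P_T\|\pi_h\|^2$ term ever needs to be moved to the left-hand side), whereas you propose testing all four equations; the $h$-bookkeeping you flag as the main difficulty is carried out in the paper exactly along the lines you describe.
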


This result shows that the HDG method (\ref{HDG_formulation}) is locking-free for nearly incompressible materials. 
We emphasize that the convergence rate of stress for nearly incompressible materials is one order higher than   \cite{ArnoldFalkWinther07,GopalakrishnanGuzman11} with the same finite element space for numerical trace of displacement.

\section{A characterization of the HDG method}
\label{sec:hybrid}

In this section we show how to eliminate elementwise 
the unknowns $\underline{\boldsymbol{\sigma}}_h$ and $\boldsymbol{u}_h$ from 
the equations \eqref{HDG_formulation} and rewrite
the original system solely in terms of the unknown $\widehat{\boldsymbol{u}}_{h}$, 
see also \cite{SoonCockburnStolarski09}. Via this elimination, 
we do not have to deal with the large indefinite linear system generated by \eqref{HDG_formulation}, 
but with the inversion of a sparser symmetric positive definite matrix of remarkably smaller size.

\subsection{The local problems}
The result on the above mentioned elimination can be described using additional ``local" operators defined as follows:

On each element $K$, for any $\boldsymbol{\lambda}\in \boldsymbol{M}_h |_{\partial K}$, we denote $(\underline{\boldsymbol{Q}} \boldsymbol{\lambda}, \boldsymbol{U} \boldsymbol{\lambda}) \in \underline{\boldsymbol{V}}(K) \times \boldsymbol{W}(K)$ to be the unique solution of the local problem:
\begin{subequations}
\label{local_solvers}
\begin{align}
\label{local_solvers_eq1}
 (\mathcal{A} \underline{\boldsymbol{Q}}\boldsymbol{\lambda},  \underline{\boldsymbol{v}})_{K}
+(\boldsymbol{U}\boldsymbol{\lambda}, \nabla\cdot \underline{\boldsymbol{v}})_{K} 
&= \langle \boldsymbol{\lambda}, \underline{\boldsymbol{v}}\cdot \boldsymbol{n}\rangle_{\partial K},\\
\label{local_solvers_eq2}
 - (\nabla\cdot \underline{\boldsymbol{Q}}\boldsymbol{\lambda}, \boldsymbol{\omega})_{K} 
+ \langle \tau \Pim \boldsymbol{U}\boldsymbol{\lambda}, \boldsymbol{\omega}\rangle_{\partial K} 
&= \langle \tau \boldsymbol{\lambda}, \boldsymbol{\omega}\rangle_{\partial K},
\end{align}
\end{subequations}
for all $(\underline{\boldsymbol{v}}, \boldsymbol{\omega})\in \underline{\boldsymbol{V}}(K)\times \boldsymbol{W}(K)$.

On each element $K$, we also denote $(\underline{\boldsymbol{Q}}_S \boldsymbol{\lambda}, \boldsymbol{U}_S \boldsymbol{\lambda}) \in \underline{\boldsymbol{V}}(K) \times \boldsymbol{W}(K)$ to be the unique solution of the local problem:
\begin{subequations}
\label{local_solvers_source}
\begin{align}
\label{local_solvers_source_eq1}
 (\mathcal{A} \underline{\boldsymbol{Q}}_{S}\boldsymbol{f},  \underline{\boldsymbol{v}})_{K}
+(\boldsymbol{U}_{S}\boldsymbol{f}, \nabla\cdot \underline{\boldsymbol{v}})_{K} 
&= 0,\\
\label{local_solvers_source_eq2}
 - (\nabla\cdot \underline{\boldsymbol{Q}}_{S}\boldsymbol{f}, \boldsymbol{\omega})_{K} 
+ \langle \tau \Pim \boldsymbol{U}_{S}\boldsymbol{f}, \boldsymbol{\omega}\rangle_{\partial K} 
&= - (\boldsymbol{f}, \boldsymbol{\omega})_{K},
\end{align}
\end{subequations}
for all $(\underline{\boldsymbol{v}}, \boldsymbol{\omega})\in \underline{\boldsymbol{V}}(K)\times \boldsymbol{W}(K)$.

It is easy to show the two local problems are well-posted. In addition, due to the linearity of the global system \eqref{HDG_formulation},the numerical solution $(\underline{\boldsymbol{\sigma}}_h, \boldsymbol{u}_h, \widehat{\boldsymbol{u}}_h)$ satisfies
\begin{equation}
\label{local_to_global}
\underline{\boldsymbol{\sigma}}_h =  \underline{\boldsymbol{Q}}\widehat{\boldsymbol{u}}_h
+\underline{\boldsymbol{Q}}_{S}\boldsymbol{f}, \quad 
\boldsymbol{u}_h = \boldsymbol{U}\widehat{\boldsymbol{u}}_h+\boldsymbol{U}_{S}\boldsymbol{f}.
\end{equation}

\subsection{The global problem}
For the sake of simplicity, we assume the boundary data $\boldsymbol{g} = 0$. Then, the HDG method 
(\ref{HDG_formulation}) is to find $(\underline{\boldsymbol{\sigma}}_h, \boldsymbol{u}_h, \widehat{\boldsymbol{u}}_h)
\in \underline{\boldsymbol{V}}_h \times \boldsymbol{W}_h \times\boldsymbol{M}^0_h$ satisfying
\begin{subequations}
\label{HDG_formulation_zero}
\begin{alignat}{1}
\label{strong_cont_zero}
\bint{\mathcal{A} \underline{\boldsymbol{\sigma}}_h}{\underline{\boldsymbol{v}}} + \bint{\boldsymbol{u}_h}{\nabla \cdot \underline{\boldsymbol{v}}}-\bintEh{\widehat{\boldsymbol{u}}_h}{\underline{\boldsymbol{v}} \n} &= 0, \\
-\bint{\nabla\cdot\underline{\boldsymbol{\sigma}}_h}{\boldsymbol{\omega}} 
+ \bintEh{\tau (\Pim \boldsymbol{u}_h - \widehat{\boldsymbol{u}}_h)}{\boldsymbol{\omega}} 
& = -\bint{\boldsymbol{f}}{\boldsymbol{\omega}},\\
\label{HDG_formulation_zero_c}
\bintEhi{\underline{\boldsymbol{\sigma}}_h \n - \tau (\Pim \boldsymbol{u}_h - \widehat{\boldsymbol{u}}_h)}{\boldsymbol{\mu}} & = 0,
\end{alignat} 
\end{subequations}
for all $(\underline{\boldsymbol{v}}, \boldsymbol{\omega}, \boldsymbol{\mu}) \in
\underline{\boldsymbol{V}}_h \times \boldsymbol{W}_h \times
\boldsymbol{M}^0_h$, where $\boldsymbol{M}^0_{h} = 
\{\boldsymbol{\mu}\in \boldsymbol{M}_{h}: \boldsymbol{\mu}|_{\partial \Omega}=0 \}$.

Combining (\ref{HDG_formulation_zero_c}) with (\ref{local_to_global}), 
we have that for all $\boldsymbol{\mu}\in \boldsymbol{M}^0_{h}$,
\begin{equation}
\label{reduced_system_primitive}
\langle  (\underline{\boldsymbol{Q}}\widehat{\boldsymbol{u}}_h) \n
 - \tau (\Pim  \boldsymbol{U}\widehat{\boldsymbol{u}}_h 
 - \widehat{\boldsymbol{u}}_h ), \boldsymbol{\mu}\rangle_{\partial \mathcal{T}_{h}} 
= \langle  (\underline{\boldsymbol{Q}}_{S}\boldsymbol{f}) \n
 - \tau \Pim  \boldsymbol{U}_{S}\boldsymbol{f}, \boldsymbol{\mu}\rangle_{\partial \mathcal{T}_{h}}. 
\end{equation}
Up to now we can see that we only need to solve the reduced global linear system (\ref{reduced_system_primitive}) first, then 
recover $(\underline{\boldsymbol{\sigma}}_h, \boldsymbol{u}_h)$ by (\ref{local_to_global}) element by element. Next we show that the 
global system \eqref{reduced_system_primitive} is in fact symmetric positive definite.

\subsection{A characterization of the approximate solution}
The above results suggest the following characterization of the numerical solution of the HDG method. 

\begin{theorem}
\label{thm_reduced_system}
The numerical solution of the HDG method  (\ref{HDG_formulation}) satisfies 
\begin{equation*}
\underline{\boldsymbol{\sigma}}_h =  \underline{\boldsymbol{Q}}\widehat{\boldsymbol{u}}_h
+\underline{\boldsymbol{Q}}_{S}\boldsymbol{f}, \quad 
\boldsymbol{u}_h = \boldsymbol{U}\widehat{\boldsymbol{u}}_h+\boldsymbol{U}_{S}\boldsymbol{f}.
\end{equation*}
If we assume the boundary data $\boldsymbol{g}=0$, 
then $\widehat{\boldsymbol{u}}_h\in \boldsymbol{M}^0_{h}$ is the solution of 
\begin{equation}
\label{reduced_system}
 a_{h} (\widehat{\boldsymbol{u}}_h, \boldsymbol{\mu}) = 
\langle  (\underline{\boldsymbol{Q}}_{S}\boldsymbol{f}) \n
 - \tau \Pim  \boldsymbol{U}_{S}\boldsymbol{f}, \boldsymbol{\mu}\rangle_{\partial \mathcal{T}_{h}},\quad 
 \forall \boldsymbol{\mu}\in \boldsymbol{M}^0_{h},
\end{equation}
where
\[ 
a_{h}(\widehat{\boldsymbol{u}}_h, \boldsymbol{\mu}) 
= (\mathcal{A}\underline{\boldsymbol{Q}}\widehat{\boldsymbol{u}}_h,  
\underline{\boldsymbol{Q}}\boldsymbol{\mu})_{\mathcal{T}_{h}}
+ \langle \tau(\Pim  \boldsymbol{U}\widehat{\boldsymbol{u}}_h - \widehat{\boldsymbol{u}}_h ), 
\Pim  \boldsymbol{U}\boldsymbol{\mu} - \boldsymbol{\mu} \rangle_{\partial \mathcal{T}_{h}}.
\]
In addition, the bilinear operator $a_{h}(\boldsymbol{\lambda}, \boldsymbol{\lambda})$ is positive definite.
\end{theorem}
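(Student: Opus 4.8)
The plan is to establish the three assertions in turn: the elementwise representation of $(\underline{\boldsymbol{\sigma}}_h,\boldsymbol{u}_h)$, the reduced global equation for $\widehat{\boldsymbol{u}}_h$, and the positive definiteness of $a_h$. The first assertion is essentially \eqref{local_to_global} and requires no new work: the local problems \eqref{local_solvers} and \eqref{local_solvers_source} are well posed (square linear systems, with injectivity coming from the same energy identity used below), and the first two equations of \eqref{HDG_formulation_zero} are, element by element, exactly the defining relations of the local solvers with data $\widehat{\boldsymbol{u}}_h|_{\partial K}$ and $\boldsymbol{f}|_K$; superposition and local uniqueness then give $\underline{\boldsymbol{\sigma}}_h=\underline{\boldsymbol{Q}}\widehat{\boldsymbol{u}}_h+\underline{\boldsymbol{Q}}_S\boldsymbol{f}$ and $\boldsymbol{u}_h=\boldsymbol{U}\widehat{\boldsymbol{u}}_h+\boldsymbol{U}_S\boldsymbol{f}$. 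So I would just recall this.

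For the reduced equation I would substitute \eqref{local_to_global} into the conservativity equation \eqref{HDG_formulation_zero_c} to obtain \eqref{reduced_system_primitive}, and then identify its left-hand side with $a_h(\widehat{\boldsymbol{u}}_h,\cdot)$. The engine is a symmetrization identity coming from the local solvers themselves: for $\boldsymbol{\lambda},\boldsymbol{\mu}\in\boldsymbol{M}_h|_{\partial K}$, test \eqref{local_solvers_eq1} written for $\boldsymbol{\mu}$ with $\underline{\boldsymbol{v}}=\underline{\boldsymbol{Q}}\boldsymbol{\lambda}$ and \eqref{local_solvers_eq2} written for $\boldsymbol{\lambda}$ with $\boldsymbol{\omega}=\boldsymbol{U}\boldsymbol{\mu}$, then add; the volume coupling terms $\pm(\boldsymbol{U}\boldsymbol{\mu},\nabla\cdot\underline{\boldsymbol{Q}}\boldsymbol{\lambda})_K$ cancel, leaving
\[
\langle(\underline{\boldsymbol{Q}}\boldsymbol{\lambda})\n,\boldsymbol{\mu}\rangle_{\partial K}=(\mathcal{A}\underline{\boldsymbol{Q}}\boldsymbol{\lambda},\underline{\boldsymbol{Q}}\boldsymbol{\mu})_K+\langle\tau(\Pim\boldsymbol{U}\boldsymbol{\lambda}-\boldsymbol{\lambda}),\boldsymbol{U}\boldsymbol{\mu}\rangle_{\partial K}.
\]
Since $\Pim\boldsymbol{U}\boldsymbol{\lambda}-\boldsymbol{\lambda}\in\boldsymbol{P}_k(F)$ on each face $F$ and $\tau$ is a constant there, $\Pim$ may be inserted in the second slot of the last term. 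Subtracting $\langle\tau(\Pim\boldsymbol{U}\boldsymbol{\lambda}-\boldsymbol{\lambda}),\boldsymbol{\mu}\rangle_{\partial K}$ from both sides, summing over $K\in\Oh$, and setting $\boldsymbol{\lambda}=\widehat{\boldsymbol{u}}_h$, the left side of \eqref{reduced_system_primitive} becomes precisely $a_h(\widehat{\boldsymbol{u}}_h,\boldsymbol{\mu})$ as written in the statement, while its right side is untouched; this is \eqref{reduced_system}, and the resulting form $a_h$ is manifestly symmetric.

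For positive definiteness I would set $\boldsymbol{\mu}=\boldsymbol{\lambda}$, obtaining $a_h(\boldsymbol{\lambda},\boldsymbol{\lambda})=\|\underline{\boldsymbol{Q}}\boldsymbol{\lambda}\|_{L^{2}(\mathcal{A},\Omega)}^2+\langle\tau(\Pim\boldsymbol{U}\boldsymbol{\lambda}-\boldsymbol{\lambda}),\Pim\boldsymbol{U}\boldsymbol{\lambda}-\boldsymbol{\lambda}\rangle_{\partial\Oh}\ge 0$, so only $a_h(\boldsymbol{\lambda},\boldsymbol{\lambda})=0\Rightarrow\boldsymbol{\lambda}=0$ for $\boldsymbol{\lambda}\in\boldsymbol{M}^0_h$ remains. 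From $a_h(\boldsymbol{\lambda},\boldsymbol{\lambda})=0$ one gets $\underline{\boldsymbol{Q}}\boldsymbol{\lambda}=0$ (as $\mathcal{A}$ is positive definite) and $\Pim\boldsymbol{U}\boldsymbol{\lambda}=\boldsymbol{\lambda}$ on every $\partial K$ (as $\tau>0$). Putting $\underline{\boldsymbol{Q}}\boldsymbol{\lambda}=0$ into \eqref{local_solvers_eq1} and integrating by parts gives $-(\underline{\boldsymbol{\epsilon}}(\boldsymbol{U}\boldsymbol{\lambda}),\underline{\boldsymbol{v}})_K+\langle\boldsymbol{U}\boldsymbol{\lambda}-\boldsymbol{\lambda},\underline{\boldsymbol{v}}\n\rangle_{\partial K}=0$ for all $\underline{\boldsymbol{v}}\in\underline{\boldsymbol{P}}_k(\boldsymbol{S},K)$; on each face $\underline{\boldsymbol{v}}\n\in\boldsymbol{P}_k(F)$ while $\boldsymbol{U}\boldsymbol{\lambda}-\boldsymbol{\lambda}=\boldsymbol{U}\boldsymbol{\lambda}-\Pim\boldsymbol{U}\boldsymbol{\lambda}$ is $L^2(F)$-orthogonal to $\boldsymbol{P}_k(F)$, so the boundary term drops and $(\underline{\boldsymbol{\epsilon}}(\boldsymbol{U}\boldsymbol{\lambda}),\underline{\boldsymbol{v}})_K=0$ for all such $\underline{\boldsymbol{v}}$; choosing $\underline{\boldsymbol{v}}=\underline{\boldsymbol{\epsilon}}(\boldsymbol{U}\boldsymbol{\lambda})\in\underline{\boldsymbol{P}}_k(\boldsymbol{S},K)$ forces $\underline{\boldsymbol{\epsilon}}(\boldsymbol{U}\boldsymbol{\lambda})|_K=0$, i.e. $\boldsymbol{U}\boldsymbol{\lambda}|_K$ is an infinitesimal rigid displacement. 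Since such displacements are affine (degree $\le 1\le k$), $\Pim$ fixes their face traces, so $\boldsymbol{\lambda}|_F$ equals the one-sided trace of $\boldsymbol{U}\boldsymbol{\lambda}$; matching the two one-sided traces on interior faces makes the rigid displacements of face-neighbours agree, and on a boundary face $\boldsymbol{\lambda}|_F=0$ forces $\boldsymbol{U}\boldsymbol{\lambda}$ to vanish on a two-dimensional set, hence on that element. Connectedness of $\Omega$ then propagates this to all of $\Oh$, so $\boldsymbol{U}\boldsymbol{\lambda}\equiv0$ and $\boldsymbol{\lambda}=\Pim\boldsymbol{U}\boldsymbol{\lambda}=0$.

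I expect the symmetrization identity in the second step to be the only genuine obstacle: one must keep straight which local equation is tested against which local solver, and must use the self-adjointness of $\Pim$ together with the constancy of $\tau$ on each face; everything afterwards is bookkeeping. The rigidity propagation in the third step is routine but does rely on $\Omega$ being connected and on $k\ge1$ (so $\Pim$ preserves affine functions); notably, and unlike the error analysis, it does not invoke the elementwise Korn inequality of Lemma~\ref{symmetric_grad}, since a vanishing symmetric gradient already yields a rigid motion directly.
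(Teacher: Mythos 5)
Your proposal is correct and follows essentially the same route as the paper: the same symmetrization identity obtained by testing the local solver for $\boldsymbol{\mu}$ against $\underline{\boldsymbol{Q}}\boldsymbol{\lambda}$ and the local solver for $\boldsymbol{\lambda}$ against $\boldsymbol{U}\boldsymbol{\mu}$, followed by the same kernel argument (energy identity $\Rightarrow$ $\underline{\boldsymbol{Q}}\boldsymbol{\lambda}=0$ and $\Pim\boldsymbol{U}\boldsymbol{\lambda}=\boldsymbol{\lambda}$, elementwise rigid motions, gluing across faces, and the boundary condition). Your treatment is in fact slightly more explicit than the paper's at the integration-by-parts step, where you justify why the boundary term $\langle\boldsymbol{U}\boldsymbol{\lambda}-\Pim\boldsymbol{U}\boldsymbol{\lambda},\underline{\boldsymbol{v}}\n\rangle_{\partial K}$ vanishes.
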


\begin{proof}
In order to show (\ref{reduced_system}) is true, we only need to show that for all 
$\boldsymbol{\lambda}, \boldsymbol{\mu}\in \boldsymbol{M}^0_h$, then 
\begin{equation*}
a_{h}(\boldsymbol{\lambda}, \boldsymbol{\mu}) = 
\langle  (\underline{\boldsymbol{Q}}\boldsymbol{\lambda}) \n
 - \tau (\Pim  \boldsymbol{U}\boldsymbol{\lambda} 
 - \boldsymbol{\lambda}), \boldsymbol{\mu}\rangle_{\partial \mathcal{T}_{h}}.
\end{equation*}

According to (\ref{local_solvers}), we have 
\begin{subequations}
\label{local_solvers_derivation}
\begin{align}
\label{local_solvers_derivation_eq1}
& (\mathcal{A} \underline{\boldsymbol{Q}}\boldsymbol{m},  \underline{\boldsymbol{v}})_{\mathcal{T}_{h}}
+(\boldsymbol{U}\boldsymbol{m}, \nabla\cdot \underline{\boldsymbol{v}})_{\mathcal{T}_{h}} 
= \langle \boldsymbol{m}, \underline{\boldsymbol{v}}\cdot \boldsymbol{n}\rangle_{\partial \mathcal{T}_{h}},\\
\label{local_solvers_derivation_eq2}
& (\nabla\cdot \underline{\boldsymbol{Q}}\boldsymbol{m}, \boldsymbol{\omega})_{\mathcal{T}_{h}} 
= \langle\tau(\Pim \boldsymbol{U}\boldsymbol{m}-\boldsymbol{m}),\boldsymbol{\omega}\rangle_{\partial \mathcal{T}_{h}},
\end{align}
\end{subequations}
for all $(\underline{\boldsymbol{v}}, \boldsymbol{\omega})\in \underline{\boldsymbol{V}}_{h}\times \boldsymbol{W}_{h}$, 
$\boldsymbol{m}\in \boldsymbol{M}^0_h$. Then, we have 
\begin{align*}
& \langle  (\underline{\boldsymbol{Q}}\boldsymbol{\lambda}) \n
 - \tau (\Pim  \boldsymbol{U}\boldsymbol{\lambda} 
 - \boldsymbol{\lambda}), \boldsymbol{\mu}\rangle_{\partial \mathcal{T}_{h}}\\
= & \langle \boldsymbol{\mu},  (\underline{\boldsymbol{Q}}\boldsymbol{\lambda}) \n\rangle_{\partial\mathcal{T}_{h}} 
- \langle  \tau (\Pim  \boldsymbol{U}\boldsymbol{\lambda} 
 - \boldsymbol{\lambda}), \boldsymbol{\mu}\rangle_{\partial \mathcal{T}_{h}}\\
= & (\mathcal{A} \underline{\boldsymbol{Q}}\boldsymbol{\mu}, 
\underline{\boldsymbol{Q}}\boldsymbol{\lambda})_{\mathcal{T}_{h}}
+(\boldsymbol{U}\boldsymbol{\mu}, \nabla\cdot \underline{\boldsymbol{Q}}\boldsymbol{\lambda})_{\mathcal{T}_{h}}  
- \langle  \tau (\Pim  \boldsymbol{U}\boldsymbol{\lambda} 
 - \boldsymbol{\lambda}), \boldsymbol{\mu}\rangle_{\partial \mathcal{T}_{h}}\quad
 \text{ by }(\ref{local_solvers_derivation_eq1})\\
 = & (\mathcal{A} \underline{\boldsymbol{Q}}\boldsymbol{\mu}, 
\underline{\boldsymbol{Q}}\boldsymbol{\lambda})_{\mathcal{T}_{h}}
+(\nabla\cdot \underline{\boldsymbol{Q}}\boldsymbol{\lambda}, \boldsymbol{U}\boldsymbol{\mu})_{\mathcal{T}_{h}}  
- \langle  \tau (\Pim  \boldsymbol{U}\boldsymbol{\lambda} 
 - \boldsymbol{\lambda}), \boldsymbol{\mu}\rangle_{\partial \mathcal{T}_{h}}\\
 = & (\mathcal{A} \underline{\boldsymbol{Q}}\boldsymbol{\mu}, 
\underline{\boldsymbol{Q}}\boldsymbol{\lambda})_{\mathcal{T}_{h}}
+ \langle  \tau (\Pim  \boldsymbol{U}\boldsymbol{\lambda} -\boldsymbol{\lambda}), \boldsymbol{U}\boldsymbol{\mu}-\boldsymbol{\mu}\rangle_{\partial \mathcal{T}_{h}}\quad
\text{ by } (\ref{local_solvers_derivation_eq2})\\
= & (\mathcal{A} \underline{\boldsymbol{Q}}\boldsymbol{\mu}, 
\underline{\boldsymbol{Q}}\boldsymbol{\lambda})_{\mathcal{T}_{h}}
+ \langle  \tau (\Pim  \boldsymbol{U}\boldsymbol{\lambda} -\boldsymbol{\lambda}), \Pim\boldsymbol{U}\boldsymbol{\mu}-\boldsymbol{\mu}\rangle_{\partial \mathcal{T}_{h}}\\
= & a_{h}(\boldsymbol{\lambda}, \boldsymbol{\mu}).
\end{align*}
So, we can conclude that (\ref{reduced_system}) holds. We end the proof by showing the bilinear operator $a_h(\cdot, \cdot)$ is positive definite.

If $a_{h}(\boldsymbol{\lambda}, \boldsymbol{\lambda}) = 0$ for 
some $\boldsymbol{\lambda}\in \boldsymbol{M}^0_h$, from the previous result we have 
\[
\underline{\boldsymbol{Q}}\boldsymbol{\lambda} = 0, \quad \Pim\boldsymbol{U}\boldsymbol{\lambda} - \boldsymbol{\lambda}|_{\partial\mathcal{T}_{h}}=0. 
\]
We apply integration by parts on (\ref{local_solvers_eq1}), we have 
\[
\bintK{\underline{\epsilon} (\boldsymbol{U}\boldsymbol{\lambda})}{\underline{\boldsymbol{v}}} = 0, \quad \forall \, \underline{\boldsymbol{v}} \in \underline{\boldsymbol{V}}(K).
\]
This implies that $\underline{\epsilon}(\boldsymbol{U}\boldsymbol{\lambda})|_{K} = 0$ for all $K\in\mathcal{T}_{h}$. 
So, for any $K\in\mathcal{T}_{h}$, there are $\boldsymbol{a}_{K}, \boldsymbol{b}_{K}\in\mathbb{R}^{3}$ 
such that $\boldsymbol{U}\boldsymbol{\lambda}|_{K} = \boldsymbol{a}_{K}\times\boldsymbol{x}+\boldsymbol{b}_{K}$. 
Since $k \ge 1$, we have $\boldsymbol{P}_M \boldsymbol{U} \boldsymbol{\lambda} = \boldsymbol{U} \boldsymbol{\lambda}$. 
Combining this result with the fact that $\Pim\boldsymbol{U}\boldsymbol{\lambda} - \boldsymbol{\lambda}|_{\partial\mathcal{T}_{h}}=0$ and 
$\boldsymbol{\lambda}|_{\partial \Omega}=0$, we can conclude that $\boldsymbol{U}\boldsymbol{\lambda} \in \boldsymbol{C}^0(\Omega)$ and $\boldsymbol{U}\boldsymbol{\lambda}|_{\partial \Omega} = 0$. 

Finally, let us consider two adjacent element $K_1, K_2$ with the interface $F = \bar{K_1} \cap \bar{K_2}$. In addition, we assume that on $K_i$, $\boldsymbol{U}\boldsymbol{\lambda}$ can be expressed as
\[
\boldsymbol{U}\boldsymbol{\lambda} = \boldsymbol{a}_i \times \boldsymbol{x} + \boldsymbol{b}_i, \quad i = 1, 2.
\]
We claim that $\boldsymbol{a}_1 = \boldsymbol{a}_2$ and $\boldsymbol{b}_1 = \boldsymbol{b}_2$. This fact can be shown by considering the 
continuity of the function on the interface $F$. We omit the detailed proof since it only involves elementary linear algebra. 

From this result we conclude that there exist $\boldsymbol{a}, \boldsymbol{b}\in\mathbb{R}^{3}$ such that 
$\boldsymbol{U \lambda} = \boldsymbol{a} \times \boldsymbol{x} + \boldsymbol{b}$ in $\Omega$. 
By the fact that $\boldsymbol{U \lambda}|_{\partial \Omega} = 0$, we can conclude that $\boldsymbol{U \lambda} = 0$, 
hence $\boldsymbol{\lambda} = 0$. This completes the proof.
\end{proof}

\begin{remark}
In Theorem~\ref{thm_reduced_system}, we assume the boundary data $\boldsymbol{g}=0$. 
Actually, if $\boldsymbol{g}$ is not zero, we can still obtain the same linear system as $a_{h}$ in Theorem~\ref{thm_reduced_system}
by the same treatment of boundary data in \cite{CockburnDuboisGopalakrishnanTan2013}.
\end{remark}

\section{Error Analysis}
In this section we provide detailed proofs for our a priori error estimates - Theorem~\ref{Main_result} 
and Theorem~\ref{thm_locking_free}. We use elementwise Korn's inequality (Lemma~\ref{symmetric_grad}), 
which is novel and crucial in error analysis. We use $\Piv, \Piw$ to denote the standard $L^2$-orthogonal 
projection onto $\underline{\boldsymbol{V}}_h, \boldsymbol{W}_h$, respectively. In addition, we denote
\[
\esigma = \Piv \underline{\boldsymbol{\sigma}} - \underline{\boldsymbol{\sigma}}_h, \quad 
\eu = \Piw \boldsymbol{u} - \boldsymbol{u}_h, \quad
\euhat  = \Pim \boldsymbol{u} - \widehat{\boldsymbol{u}}_h,
\]

In the analysis, we are going to use the following classical results:
\begin{subequations}\label{classical_ineq}
\begin{alignat}{2}
\label{classical_ineq_1}
\|\boldsymbol{u} - \Piw \boldsymbol{u}\|_{\Omega} & \le C h^s \|\boldsymbol{}u\|_{s, \Omega} \quad && 0 \le s \le k+2, \\
\label{classical_ineq_2}
\|\underline{\boldsymbol{\sigma}} - \Piv \underline{\boldsymbol{\sigma}}\|_{\Omega} & \le C h^t \|\underline{\boldsymbol{\sigma}}\|_{t, \Omega} && 0 \le t \le k+1, \\
\label{classical_ineq_3}
\|\boldsymbol{u} - \Pim \boldsymbol{u}\|_{\Eh} &\le C h^{s-\frac12} \|\boldsymbol{u}\|_{s, \Omega},  \quad && 1\le s \le k+1,\\
\label{classical_ineq_4}
\|\boldsymbol{u} - \Piw \boldsymbol{u}\|_{\partial K} &\le C h^{s-\frac12} \|\boldsymbol{u}\|_{s, K}, \quad && 1 \le s \le k+2, \\
\label{classical_ineq_5}
\|\underline{\boldsymbol{\sigma}}\n - \Piv \underline{\boldsymbol{\sigma}}\n\|_{\partial K} &\le C h^{t-\frac12} \|\underline{\boldsymbol{\sigma}}\|_{t, K}, \quad && 1 \le t \le k+1, \\
\label{classical_ineq_6}
\|\boldsymbol{v}\|_{\partial K} &\le C h^{-\frac12} \|\boldsymbol{v}\|_K, \quad &&\forall \; \boldsymbol{v} \in \boldsymbol{P}_s(K), \\
\label{classical_ineq_7}
\|\underline{\boldsymbol{\sigma}}\n - \Pim (\underline{\boldsymbol{\sigma}}\n)\|_{\partial K} &\le C h^{t-\frac12} \|\underline{\boldsymbol{\sigma}}\|_{t, K}, \quad && 1 \le t \le k+1.
\end{alignat}
\end{subequations}
The above results are due to standard approximation theory of polynomials, trace inequality.

Let $\underline{\boldsymbol{\epsilon}}_{h}$ denote the discrete symmetric gradient operator, 
such that for any $K\in\Oh$, $\underline{\boldsymbol{\epsilon}}_{h}|_{K}= \underline{\boldsymbol{\epsilon}}|_{K}$. 
It is well known (see Theorem $2.2$ in \cite{Ciarlet2010}) the {\em{kernel}} of the  operator $\underline{\boldsymbol{\epsilon}}_{h} (\cdot)$ is:
\[
\ker \underline{\boldsymbol{\epsilon}}_{h} = \Upsilon_h := \{\boldsymbol{\Lambda} \in \boldsymbol{L}^2(\Omega), \; \boldsymbol{\Lambda}|_{K} = \underline{\boldsymbol{B}}_{K} \boldsymbol{x} + \boldsymbol{b}_{K}, \; \underline{\boldsymbol{B}}_{K}\in \underline{\boldsymbol{\mathcal{A}}}, \boldsymbol{b}_{K} \in \mathbb{R}^3,K\in\Oh \}.
\]
Here, $\underline{\boldsymbol{\mathcal{A}}}$ denotes the set of all anti-symmetric matrices in $\mathbb{R}^{3\times 3}$.

In the analysis, we need the following elementwise Korn's inequality:
\begin{lemma}\label{symmetric_grad}
Let $K \in \Oh$ be a generic element with size $h_K$ and $\Upsilon(K):= \Upsilon_h|_K$. Then for any function 
$\boldsymbol{v} \in \boldsymbol{W}(K)$, we have
\[
\inf_{\boldsymbol{\Lambda} \in \Upsilon(K)} \|\nabla(\boldsymbol{v} + \boldsymbol{\Lambda})\|_K 
\le C \|\underline{\boldsymbol{\epsilon}}(\boldsymbol{v})\|_K,
\]
Here $C$ is independent of the size $h_K$. In addition, if $K$ is a tetrahedron, 
the above inequality holds for any $\boldsymbol{v}\in \boldsymbol{H}^1(K)$.
\end{lemma}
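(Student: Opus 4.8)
The plan is to rescale to unit size and then treat the two assertions by two different mechanisms: for a general polyhedral $K$ one exploits that $\boldsymbol{v}$ ranges only over the finite-dimensional space $\boldsymbol{W}(K)=\boldsymbol{P}_{k+1}(K)$, whereas for a tetrahedron one uses affine equivalence to a fixed reference simplex. In both cases $\underline{\boldsymbol{\epsilon}}(\boldsymbol{v}+\boldsymbol{\Lambda})=\underline{\boldsymbol{\epsilon}}(\boldsymbol{v})$ for $\boldsymbol{\Lambda}\in\Upsilon(K)$, so it suffices to produce one good $\boldsymbol{\Lambda}^\star\in\Upsilon(K)$.

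\textbf{The polyhedral case.} By shape-regularity $K$ contains a ball $B=B(\boldsymbol{x}_K,\rho h_K)$ with $\rho$ bounded below uniformly over the mesh family, and $K\subset B(\boldsymbol{y}_K,h_K)$. First I invoke the classical Korn inequality on the ball $B$ (valid for $\boldsymbol{H}^1(B)$): there is an infinitesimal rigid motion $\boldsymbol{\Lambda}^\star(\boldsymbol{x})=\underline{\boldsymbol{B}}\boldsymbol{x}+\boldsymbol{b}$, $\underline{\boldsymbol{B}}\in\underline{\boldsymbol{\mathcal{A}}}$, with
\[
\|\nabla(\boldsymbol{v}+\boldsymbol{\Lambda}^\star)\|_{B}\le C\,\|\underline{\boldsymbol{\epsilon}}(\boldsymbol{v})\|_{B}\le C\,\|\underline{\boldsymbol{\epsilon}}(\boldsymbol{v})\|_{K},
\]
where $C$ depends only on the space dimension, since on a ball Korn's constant is invariant under translation and dilation; restricting $\boldsymbol{\Lambda}^\star$ to $K$ gives an element of $\Upsilon(K)$. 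Now $\nabla(\boldsymbol{v}+\boldsymbol{\Lambda}^\star)$ is a matrix-valued polynomial of degree at most $k$, and the remaining ingredient is the comparison $\|\boldsymbol{p}\|_{K}\le C\,\|\boldsymbol{p}\|_{B}$ for all matrix-valued polynomials $\boldsymbol{p}$ of degree at most $k$, with $C$ depending only on $k$ and $\rho$. This is proved by rescaling to $h_K=1$, translating so that $B$ is centered at the origin, enlarging $K$ into the fixed ball $B(0,2)$, and noting that on the finite-dimensional polynomial space both $\|\cdot\|_{B(0,2)}$ and $\|\cdot\|_{B(0,\rho)}$ are norms (a polynomial vanishing on a ball vanishes identically) and hence equivalent. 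Chaining the two estimates gives $\inf_{\boldsymbol{\Lambda}\in\Upsilon(K)}\|\nabla(\boldsymbol{v}+\boldsymbol{\Lambda})\|_{K}\le\|\nabla(\boldsymbol{v}+\boldsymbol{\Lambda}^\star)\|_{K}\le C\|\nabla(\boldsymbol{v}+\boldsymbol{\Lambda}^\star)\|_{B}\le C\|\underline{\boldsymbol{\epsilon}}(\boldsymbol{v})\|_{K}$.

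\textbf{The tetrahedral case.} Here I drop the polynomial restriction. Any shape-regular tetrahedron is $K=F(\widehat{K})$, $F(\widehat{\boldsymbol{x}})=\underline{\boldsymbol{B}}\widehat{\boldsymbol{x}}+\boldsymbol{b}$, with $\widehat{K}$ the reference simplex and the condition number $\|\underline{\boldsymbol{B}}\|\,\|\underline{\boldsymbol{B}}^{-1}\|$ bounded in terms of shape-regularity only. The key is to pull $\boldsymbol{v}$ back \emph{covariantly}, $\widehat{\boldsymbol{v}}:=\underline{\boldsymbol{B}}^{\top}(\boldsymbol{v}\circ F)$, so that the Jacobian on $\widehat{K}$ is $\underline{\boldsymbol{B}}^{\top}(\nabla\boldsymbol{v}\circ F)\underline{\boldsymbol{B}}$ and hence its symmetric part is $\underline{\boldsymbol{B}}^{\top}(\underline{\boldsymbol{\epsilon}}(\boldsymbol{v})\circ F)\underline{\boldsymbol{B}}$: the symmetric gradient transforms by congruence, and $\Upsilon(\widehat{K})$ is carried onto $\Upsilon(K)$. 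Applying Korn on $\widehat{K}$ to $\widehat{\boldsymbol{v}}$, transforming back via the change of variables $\boldsymbol{x}=F(\widehat{\boldsymbol{x}})$, and bounding every occurrence of $\underline{\boldsymbol{B}}$, $\underline{\boldsymbol{B}}^{-1}$ and $\det\underline{\boldsymbol{B}}$, all powers of $h_K$ cancel and one is left with the reference Korn constant times $\big(\|\underline{\boldsymbol{B}}\|\,\|\underline{\boldsymbol{B}}^{-1}\|\big)^{2}$; taking $\boldsymbol{\Lambda}^\star$ to be the pushforward of the minimizing rigid motion on $\widehat{K}$ completes the proof.

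\textbf{The main obstacle.} The delicate point is obtaining a constant \emph{independent of $h_K$} for general polyhedra, where the usual single-reference-element scaling is unavailable; the resolution above is to localize Korn's estimate to an interior ball, where scaling is trivial, and then transfer it back to $K$ using finite-dimensionality of $\boldsymbol{P}_{k+1}(K)$ — which is precisely why the unrestricted $\boldsymbol{H}^1$ version can only be claimed for (affinely equivalent) tetrahedra. A secondary subtlety, easily overlooked, is the need for the covariant rather than the naive pullback so that $\underline{\boldsymbol{\epsilon}}(\cdot)$ transforms cleanly under the affine map.
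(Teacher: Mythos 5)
Your proof is correct. For the tetrahedral case you follow essentially the same route as the paper: the covariant pullback $\widehat{\boldsymbol{v}}=\underline{\boldsymbol{A}}_K^{\top}(\boldsymbol{v}\circ F)$, under which the symmetric gradient transforms by congruence, followed by Korn's inequality on the reference simplex and a scaling argument; this part matches the paper's proof almost verbatim. For the general polyhedral case, however, you take a genuinely different path. The paper constructs the rigid motion explicitly: it sets $\underline{\boldsymbol{B}}_K$ equal to the mean over $K$ of the antisymmetric part of $\nabla\boldsymbol{v}$, uses the classical identity $\partial_j\partial_k v_i=\partial_j\epsilon_{ik}(\boldsymbol{v})+\partial_k\epsilon_{ij}(\boldsymbol{v})-\partial_i\epsilon_{jk}(\boldsymbol{v})$ together with the inverse inequality $\|\nabla\underline{\boldsymbol{\epsilon}}(\boldsymbol{v})\|_{0,K}\le Ch_K^{-1}\|\underline{\boldsymbol{\epsilon}}(\boldsymbol{v})\|_{0,K}$, and then applies a Poincar\'e inequality to the mean-zero quantity $\nabla(\boldsymbol{v}-\boldsymbol{\Lambda})-\underline{\boldsymbol{\epsilon}}(\boldsymbol{v})$. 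You instead localize the classical $H^1$ Korn inequality to an inscribed ball $B$ (where the constant is dilation-invariant) and transfer the resulting bound from $B$ back to $K$ via equivalence of norms on the finite-dimensional space of degree-$k$ polynomial gradients. Both arguments exploit the polynomial nature of $\boldsymbol{v}$ in an essential way — the paper through the inverse estimate, you through the $\|p\|_K\le C\|p\|_B$ comparison — and both yield constants depending only on $k$ and the shape-regularity parameter. The paper's version is more self-contained (it never invokes Korn's inequality on a non-reference domain and produces the minimizing $\boldsymbol{\Lambda}$ in closed form), while yours is more modular, reducing everything to the classical Korn inequality on a ball; your closing remark correctly identifies why the unrestricted $H^1$ statement survives only for affinely equivalent elements.
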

\begin{proof}
Let $\widehat{K}$ denote the reference tetrahedron element and $\boldsymbol{v} \in \boldsymbol{H}^1(K)$.
The mapping from $\widehat{K}$ to $K$ is $\boldsymbol{x} = 
\underline{\boldsymbol{A}}_K \widehat{\boldsymbol{x}}+\boldsymbol{c}_{K}$ 
where $\underline{\boldsymbol{A}}_K$ is a non-singular matrix and  $\boldsymbol{c}_{K}\in\mathbb{R}^{3}$.

We define $\widehat{\boldsymbol{v}}$, which is the pull back of $\boldsymbol{v}$ on $\widehat{K}$, by
\begin{align*}
\underline{\boldsymbol{A}}_K^{-\top} \widehat{\boldsymbol{v}}(\widehat{\boldsymbol{x}}) 
= \boldsymbol{v}(\boldsymbol{x}) \quad \forall \widehat{\boldsymbol{x}} \in \widehat{K}.
\end{align*}
So, we have 
\begin{align*}
\nabla \boldsymbol{v} (\boldsymbol{x}) = \nabla (\underline{\boldsymbol{A}}_K^{-\top} 
\widehat{\boldsymbol{v}})(\boldsymbol{x}) = \underline{\boldsymbol{A}}_K^{-\top} 
(\nabla\widehat{\boldsymbol{v}})(\boldsymbol{x}).
\end{align*}
The last equality above is due to the fact that every component of 
$\underline{\boldsymbol{A}}_K^{-\top} $ is constant. It is easy to see that 
\begin{align*}
(\nabla\widehat{\boldsymbol{v}})(\boldsymbol{x}) = \widehat{\nabla} 
\widehat{\boldsymbol{v}}(\widehat{\boldsymbol{x}})\underline{\boldsymbol{A}}_K^{-1}.
\end{align*}
So, we have 
\begin{align*}
\underline{\boldsymbol{A}}_K^{-\top}\widehat{\nabla} \widehat{\boldsymbol{v}}(\widehat{\boldsymbol{x}}) 
\underline{\boldsymbol{A}}_K^{-1} = \nabla  \boldsymbol{v}(\boldsymbol{x}).
\end{align*}
By taking the symmetric part of both sides of the above equation, we have 
\begin{align}
\label{tensor_transform}
\underline{\boldsymbol{A}}_K^{-\top}\widehat{\underline{\boldsymbol{\epsilon}}} 
\widehat{\boldsymbol{v}}(\widehat{\boldsymbol{x}}) 
\underline{\boldsymbol{A}}_K^{-1} = \underline{\boldsymbol{\epsilon}}  \boldsymbol{v}(\boldsymbol{x}).
\end{align}

According to Theorem $2.3$ in \cite{Ciarlet2010}, the following inequality holds: 
\begin{align*}
\inf_{\widehat{\boldsymbol{\Lambda}} \in \Upsilon(\widehat{K})} \|\widehat{\boldsymbol{v}} 
+\widehat{\boldsymbol{\Lambda}}\|_{1, \widehat{K}} \le C \|\widehat{\underline{\boldsymbol{\epsilon}}}
(\widehat{\boldsymbol{v}})\|_{0, \widehat{K}}.
\end{align*}
So, there is $\widehat{\boldsymbol{\Lambda}} = \underline{\boldsymbol{B}}_{\widehat{K}} \widehat{\boldsymbol{x}}
+ \boldsymbol{b}_{\widehat{K}}$ with $\underline{\boldsymbol{B}}_{\widehat{K}}\in \underline{\boldsymbol{\mathcal{A}}}$ 
and $\boldsymbol{b}_{\widehat{K}} \in \mathbb{R}^3$, such that  
\begin{align}
\label{reference_ele}
\|\widehat{\nabla} (\widehat{\boldsymbol{v}} +\widehat{\boldsymbol{\Lambda}}) \|_{0, \widehat{K}} 
\le C \|\widehat{\underline{\boldsymbol{\epsilon}}}(\widehat{\boldsymbol{v}})\|_{0, \widehat{K}}.
\end{align}

We define 
\begin{align*}
\boldsymbol{\Lambda} (\boldsymbol{x}) = \underline{\boldsymbol{A}}_K^{-\top}
\widehat{\boldsymbol{\Lambda}}(\widehat{\boldsymbol{x}})\quad \forall \boldsymbol{x} \in K.
\end{align*}
It is easy to see that
\begin{align*}
\nabla \boldsymbol{\Lambda} = \underline{\boldsymbol{A}}_K^{-\top}\widehat{\nabla} 
\widehat{\boldsymbol{\Lambda}} \underline{\boldsymbol{A}}_K^{-1} 
= \underline{\boldsymbol{A}}_K^{-\top}\underline{\boldsymbol{B}}_{\widehat{K}}
\underline{\boldsymbol{A}}_K^{-1}\in \underline{\boldsymbol{\mathcal{A}}}.
\end{align*}
So, $\boldsymbol{\Lambda} \in \Upsilon(K)$.
Then, by standard scaling argument with (\ref{tensor_transform}, \ref{reference_ele}) and the shape regularity of the meshes, 
we can conclude that the proof for arbitrary tetrahedron element is complete.

Now, we consider the case of arbitrary shape regular element $K$, which can be hexahedron, prism or pyramid.
Let $\boldsymbol{v}= (v_{1}, v_{2}, v_{3})^{\top}\in \boldsymbol{W}_{h}|_{K}$. 
It is well known that for any $1\leq i,j,k\leq 3$, 
\begin{align*}
\partial_{j} (\partial_{k} v_{i}) = \partial_{j} (\epsilon_{ik}(\boldsymbol{v})) 
+ \partial_{k} (\epsilon_{ij}(\boldsymbol{v})) - \partial_{i} (\epsilon_{jk}(\boldsymbol{v})). 
\end{align*}
Here, $\epsilon_{ik}(\boldsymbol{v}) = \left(\underline{\boldsymbol{\epsilon}}(\boldsymbol{v})\right)_{ik}$.
Consequently, we have
\begin{align*}
\Vert \nabla (\partial_{j} v_{i} - \partial_{i} v_{j}) \Vert_{0,K} 
\leq C \Vert \nabla \underline{\boldsymbol{\epsilon}} (\boldsymbol{v}) \Vert_{0,K}
\leq C h_{K}^{-1}\Vert \underline{\boldsymbol{\epsilon}} (\boldsymbol{v}) \Vert_{0,K}.
\end{align*}
We define an anti-symmetric matrix $\underline{\boldsymbol{B}}_{K}$ by
\begin{align*}
\left( \underline{\boldsymbol{B}}_{K} \right)_{ij} = \dfrac{1}{2\vert K\vert}\int_{K} (\partial_{j} v_{i} - \partial_{i} v_{j}) d\boldsymbol{x}
\quad   1\leq i,j\leq 3.
\end{align*}
We take $\boldsymbol{\Lambda} = \underline{\boldsymbol{B}}_{K}\boldsymbol{x}$, 
which is obviously in $\Upsilon({K})$. Then, we have 
\begin{align*}
\int_{K} \left( \nabla (\boldsymbol{v}-\boldsymbol{\Lambda})
 - \underline{\boldsymbol{\epsilon}}(\boldsymbol{v}) \right) d\boldsymbol{x}= \int_K (\nabla \boldsymbol{v} - \underline{\boldsymbol{\epsilon}} (\boldsymbol{v}) ) \, d \boldsymbol{x} - \underline{\boldsymbol{B}}_K \int_K 1 \, d \boldsymbol{x} = 0.
\end{align*}
By the Poincar\'{e} inequality, we have 
\begin{align*}
\Vert \nabla (\boldsymbol{v}-\boldsymbol{\Lambda}) 
- \underline{\boldsymbol{\epsilon}}(\boldsymbol{v})\Vert_{0, K} 
\leq  C h_{K} \sum_{1\leq i,j\leq 3}\Vert \nabla (\partial_{j} v_{i} - \partial_{i} v_{j}) \Vert_{0,K} 
\leq C \Vert \underline{\boldsymbol{\epsilon}} (\boldsymbol{v}) \Vert_{0,K}.
\end{align*}
We immediately have that 
\begin{align*}
\Vert \nabla (\boldsymbol{v} - \boldsymbol{\Lambda})\Vert_{0, K} 
\leq C \Vert \underline{\boldsymbol{\epsilon}}(\boldsymbol{v})\Vert_{0, K}. 
\end{align*}
This completes the proof.
\end{proof}

{\textbf{Step 1: The error equation.}} We first present the error equation for the analysis.
\begin{lemma}
Let $(\boldsymbol{u}, \underline{\boldsymbol{\sigma}}), (\boldsymbol{u}_h, \underline{\boldsymbol{\sigma}}_h, \widehat{\boldsymbol{u}}_h)$ solve \eqref{elasticity_equation} and \eqref{HDG_formulation} respectively, we have
\begin{subequations}\label{error_equation}
\begin{alignat}{1}
\label{error_equation_1}
\bint{\mathcal{A} \esigma}{\underline{\boldsymbol{v}}} + \bint{\eu}{\nabla \cdot \underline{\boldsymbol{v}}}
- \bintEh{\euhat}{\underline{\boldsymbol{v}} \n} &=  \bint{\mathcal{A} (\Piv \underline{\boldsymbol{\sigma}} - \underline{\boldsymbol{\sigma}}) }{\underline{\boldsymbol{v}}},\\
\label{error_equation_2}
\bint{\esigma}{\nabla \boldsymbol{\omega}} - \bintEh{\underline{\boldsymbol{\sigma}} \n - \widehat{\underline{\boldsymbol{\sigma}}}_h \n}{\boldsymbol{\omega}} & = 0,\\
\label{error_equation_3}
\bintEhi{\underline{\boldsymbol{\sigma}} \n - \widehat{\underline{\boldsymbol{\sigma}}}_h \n}{\boldsymbol{\mu}} & = 0,\\
\label{error_equation_4}
\bintEhb{\euhat}{\boldsymbol{\mu}} & = 0,
\end{alignat} 
\end{subequations}
for all $(\underline{\boldsymbol{v}}, \boldsymbol{\omega}, \boldsymbol{\mu}) \in \underline{\boldsymbol{V}}_h \times \boldsymbol{W}_h \times \boldsymbol{M}_h$.
\end{lemma}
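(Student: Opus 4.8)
The plan is the standard consistency-plus-Galerkin-orthogonality argument. For each of the four identities I first show that the \emph{exact} pair $(\boldsymbol u,\underline{\boldsymbol\sigma})$ satisfies the same relation that \eqref{HDG_formulation} imposes on the discrete solution, then subtract the corresponding discrete equation, write each difference of a continuous quantity and its discrete counterpart as ``projection error $+$ discrete error'' (namely $\underline{\boldsymbol\sigma}-\underline{\boldsymbol\sigma}_h=(\underline{\boldsymbol\sigma}-\Piv\underline{\boldsymbol\sigma})+\esigma$, $\boldsymbol u-\boldsymbol u_h=(\boldsymbol u-\Piw\boldsymbol u)+\eu$, and $\boldsymbol u-\widehat{\boldsymbol u}_h=(\boldsymbol u-\Pim\boldsymbol u)+\euhat$), and discard the projection-error pieces using the $L^2$-orthogonality of $\Piv,\Piw,\Pim$.

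For \eqref{error_equation_1}: take $\underline{\boldsymbol v}\in\underline{\boldsymbol V}_h$, use \eqref{elasticity_1} to replace $\mathcal A\underline{\boldsymbol\sigma}$ by $\underline{\boldsymbol\epsilon}(\boldsymbol u)$, note that $\bint{\underline{\boldsymbol\epsilon}(\boldsymbol u)}{\underline{\boldsymbol v}}=\bint{\nabla\boldsymbol u}{\underline{\boldsymbol v}}$ since $\underline{\boldsymbol v}$ is symmetric, and integrate by parts on each $K\in\Oh$ to obtain $\bint{\mathcal A\underline{\boldsymbol\sigma}}{\underline{\boldsymbol v}}+\bint{\boldsymbol u}{\nabla\cdot\underline{\boldsymbol v}}-\bintEh{\boldsymbol u}{\underline{\boldsymbol v}\n}=0$. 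Subtracting the first equation of \eqref{HDG_formulation} and splitting as above, the term $\bint{\boldsymbol u-\Piw\boldsymbol u}{\nabla\cdot\underline{\boldsymbol v}}$ vanishes because $\nabla\cdot\underline{\boldsymbol v}\in\boldsymbol P_{k-1}(K)\subset\boldsymbol W(K)$, and $\bintEh{\boldsymbol u-\Pim\boldsymbol u}{\underline{\boldsymbol v}\n}$ vanishes because the outward normal is constant on each face $F$, so $\underline{\boldsymbol v}\n|_F\in\boldsymbol P_k(F)=\boldsymbol M(F)$; the only surviving projection error is $\bint{\mathcal A(\underline{\boldsymbol\sigma}-\Piv\underline{\boldsymbol\sigma})}{\underline{\boldsymbol v}}$, which need not vanish because $\mathcal A$ need not preserve polynomials, and moving it to the right-hand side gives \eqref{error_equation_1}. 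For \eqref{error_equation_2}: test \eqref{elasticity_2} against $\boldsymbol\omega\in\boldsymbol W_h$ and integrate by parts to get $\bint{\underline{\boldsymbol\sigma}}{\nabla\boldsymbol\omega}-\bintEh{\underline{\boldsymbol\sigma}\n}{\boldsymbol\omega}=-\bint{\boldsymbol f}{\boldsymbol\omega}$; subtracting the second equation of \eqref{HDG_formulation} and splitting $\underline{\boldsymbol\sigma}-\underline{\boldsymbol\sigma}_h$, the projection error $\bint{\underline{\boldsymbol\sigma}-\Piv\underline{\boldsymbol\sigma}}{\nabla\boldsymbol\omega}$ \emph{does} vanish this time, since $\underline{\boldsymbol\sigma}-\Piv\underline{\boldsymbol\sigma}$ is symmetric so it equals $\bint{\underline{\boldsymbol\sigma}-\Piv\underline{\boldsymbol\sigma}}{\underline{\boldsymbol\epsilon}(\boldsymbol\omega)}$ with $\underline{\boldsymbol\epsilon}(\boldsymbol\omega)\in\underline{\boldsymbol P}_k(\boldsymbol S,K)=\underline{\boldsymbol V}(K)$; note that $\widehat{\underline{\boldsymbol\sigma}}_h\n$ is kept unexpanded throughout (its form \eqref{numerical_trace} is needed only in later steps).

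For \eqref{error_equation_3} and \eqref{error_equation_4} no integration by parts is needed. Since the exact solution satisfies $\underline{\boldsymbol\sigma}\in\underline{\boldsymbol H}(\mathrm{div},\Omega)$ with single-valued normal trace, the contributions of the two elements sharing an interior face cancel (opposite outward normals), so $\bintEhi{\underline{\boldsymbol\sigma}\n}{\boldsymbol\mu}=0$ for all $\boldsymbol\mu\in\boldsymbol M_h$; subtracting the third equation of \eqref{HDG_formulation} gives \eqref{error_equation_3}. Similarly $\boldsymbol u=\boldsymbol g$ on $\partial\Omega$ and the definition of $\Pim$ give $\bintEhb{\Pim\boldsymbol u}{\boldsymbol\mu}=\bintEhb{\boldsymbol u}{\boldsymbol\mu}=\bintEhb{\boldsymbol g}{\boldsymbol\mu}$, and subtracting the fourth equation of \eqref{HDG_formulation} gives \eqref{error_equation_4}. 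There is no genuine obstacle here; the proof is bookkeeping, and the only point requiring attention is tracking which projection-error terms die: the precise degree choices $\underline{\boldsymbol V}(K)=\underline{\boldsymbol P}_k(\boldsymbol S,K)$, $\boldsymbol W(K)=\boldsymbol P_{k+1}(K)$, $\boldsymbol M(F)=\boldsymbol P_k(F)$ are exactly what place $\nabla\cdot\underline{\boldsymbol v}$, $\underline{\boldsymbol v}\n|_F$ and $\underline{\boldsymbol\epsilon}(\boldsymbol\omega)$ into the corresponding discrete spaces, so every projection error cancels except the $\mathcal A$-weighted stress term in \eqref{error_equation_1}. One also tacitly uses that the exact solution is regular enough for the elementwise integration by parts and for $\underline{\boldsymbol\sigma}\n$ to be well defined on $\Eh$.
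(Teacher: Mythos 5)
Your proof is correct and follows essentially the same route as the paper's: consistency of the exact solution with the HDG scheme, subtraction of the discrete equations, splitting each difference into projection error plus discrete error, and cancelling the projection terms via the inclusions $\nabla\cdot\underline{\boldsymbol{V}}(K)\subset\boldsymbol{W}(K)$, $\underline{\boldsymbol{V}}(K)\boldsymbol{n}|_F\subset\boldsymbol{M}(F)$ and $\underline{\boldsymbol{\epsilon}}(\boldsymbol{W}(K))\subset\underline{\boldsymbol{V}}(K)$ (the last one exploiting the symmetry of $\underline{\boldsymbol{\sigma}}-\Piv\underline{\boldsymbol{\sigma}}$, exactly as in the paper). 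You merely make explicit the consistency step that the paper states without detail.
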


\begin{proof}
We notice that the exact solution $(\boldsymbol{u}, \underline{\boldsymbol{\sigma}}, \boldsymbol{u}|_{\Eh})$ also satisfies the equation \eqref{HDG_formulation}.
Hence, {after simple algebraic manipulations, we get that}
\begin{alignat*}{1}
\bint{\mathcal{A} \Piv \underline{\boldsymbol{\sigma}}}{\underline{\boldsymbol{v}}} + \bint{\Piw \boldsymbol{u}}{\nabla \cdot \underline{\boldsymbol{v}}}
- \bintEh{\Pim \boldsymbol{u}}{\underline{\boldsymbol{v}} \n} &=  \\
-\bint{\mathcal{A}(\deltasigma)}{\underline{\boldsymbol{v}}} +  \bintEh{\boldsymbol{u} - \Pim \boldsymbol{u}}{\underline{\boldsymbol{v}}\n} &- \bint{\deltau}{\nabla \cdot \underline{\boldsymbol{v}}} ,\\
\bint{\Piv \underline{\boldsymbol{\sigma}}}{\nabla \boldsymbol{\omega}} -
\bintEh{\underline{\boldsymbol{\sigma}} \n}{\boldsymbol{\omega}}  
= - \bint{\boldsymbol{f}}{\boldsymbol{\omega}} - &\bint{\deltasigma}{\nabla
  \boldsymbol{\omega}}
\\
\bintEhi{\underline{\boldsymbol{\sigma}} \n}{\boldsymbol{\mu}} & = 0,\\
\bintEhb{\Pim \boldsymbol{u}}{\boldsymbol{\mu}} & = - \bintEhb{\boldsymbol{u} - \Pim \boldsymbol{u}}{\boldsymbol{\mu}},
\end{alignat*} 
{for all $(\underline{\boldsymbol{v}}, \boldsymbol{w}, \boldsymbol{\mu}) \in \underline{\boldsymbol{V}}_h \times \boldsymbol{W}_h \times \boldsymbol{M}_h$.}
Notice that the local spaces satisfy the following inclusion property:
\[
\nabla \cdot \underline{\boldsymbol{V}}(K) \subset \boldsymbol{W}(K), \quad \underline{\boldsymbol{\epsilon}} (\boldsymbol{W}(K)) \subset \underline{\boldsymbol{V}}(K), \quad \underline{\boldsymbol{V}}(K) \n |_{F} \subset \boldsymbol{M}(F).
\]
Hence by the property of the $L^2$-projection, the above system can be simplified as:
\begin{alignat*}{1}
\bint{\mathcal{A} \Piv \underline{\boldsymbol{\sigma}}}{\underline{\boldsymbol{v}}} + \bint{\Piw \boldsymbol{u}}{\nabla \cdot \underline{\boldsymbol{v}}}
- \bintEh{\Pim \boldsymbol{u}}{\underline{\boldsymbol{v}} \n} &= -\bint{\mathcal{A}(\deltasigma)}{\underline{\boldsymbol{v}}}, \\
\bint{\Piv \underline{\boldsymbol{\sigma}}}{\nabla \boldsymbol{\omega}} -
\bintEh{\underline{\boldsymbol{\sigma}} \n}{\boldsymbol{\omega}}  
&= - \bint{\boldsymbol{f}}{\boldsymbol{\omega}}, \\
\bintEhi{\underline{\boldsymbol{\sigma}} \n}{\boldsymbol{\mu}} & = 0,\\
\bintEhb{\Pim \boldsymbol{u}}{\boldsymbol{\mu}} & = 0,
\end{alignat*} 
for all $(\underline{\boldsymbol{v}}, \boldsymbol{w}, \boldsymbol{\mu}) \in \underline{\boldsymbol{V}}_h \times \boldsymbol{W}_h \times \boldsymbol{M}_h$. Here we applied the fact that $\bint{\deltasigma}{\nabla
  \boldsymbol{\omega}} = \bint{\deltasigma}{{\underline{\boldsymbol{\epsilon}} (\boldsymbol{\omega}}) }
 = 0$. If we now subtract the equations \eqref{HDG_formulation}, we obtain the result. This completes the proof.
\end{proof}

{\textbf{Step 2: Estimate of $\esigma$.}} We are now ready to obtain our first estimate. 
\begin{proposition}\label{energy_argument}
We have
\[
\bint{\mathcal{A} \esigma}{\esigma}  + \bintEh{\tau (\Pim \eu - \euhat)}{\Pim \eu - \euhat} = -\bint{\mathcal{A}(\deltasigma)}{\esigma}+ T_1 - T_2,
\]
where $T_1, T_2$ are defined as:
\begin{align*}
T_1 & := \bintEh{\eu - \euhat}{\underline{\boldsymbol{\sigma}} \n - (\Piv \underline{\boldsymbol{\sigma}}) \n}, \\
T_2 & := \bintEh{\eu - \euhat}{\tau (\Pim (\deltau))}.
\end{align*}
\end{proposition}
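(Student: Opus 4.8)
The plan is to carry out the classical HDG energy identity on the error equations \eqref{error_equation}, taking care of the two nonstandard features of this method: the modified numerical trace \eqref{numerical_trace}, in which $\Pim\boldsymbol{u}_h$ replaces $\boldsymbol{u}_h$, and the fact that $\widehat{\boldsymbol{u}}_h$ lives in a space one degree lower than $\boldsymbol{u}_h$. First I would test \eqref{error_equation_1} with $\underline{\boldsymbol{v}}=\esigma$, equation \eqref{error_equation_2} with $\boldsymbol{\omega}=\eu$, and equations \eqref{error_equation_3}--\eqref{error_equation_4} with $\boldsymbol{\mu}=\euhat$. Equation \eqref{error_equation_4} with $\boldsymbol{\mu}=\euhat$ forces $\euhat=0$ on $\partial\Omega$, so that testing \eqref{error_equation_3} with $\euhat$ in fact gives $\bintEh{\underline{\boldsymbol{\sigma}}\n-\widehat{\underline{\boldsymbol{\sigma}}}_h\n}{\euhat}=0$ over the whole skeleton $\partial\Oh$, the $\partial\Omega$-faces contributing nothing because they are weighted by $\euhat$. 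I would also note that the right-hand side of \eqref{error_equation_1} equals $-\bint{\mathcal{A}(\deltasigma)}{\esigma}$ since $\Piv\underline{\boldsymbol{\sigma}}-\underline{\boldsymbol{\sigma}}=-(\deltasigma)$.

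Next, in the tested \eqref{error_equation_2} I would integrate by parts elementwise, $\bint{\esigma}{\nabla\eu}=-\bint{\nabla\cdot\esigma}{\eu}+\bintEh{\esigma\n}{\eu}$, and add the result to the tested \eqref{error_equation_1}. The two volume terms involving $\nabla\cdot\esigma$ cancel by symmetry of the $L^2$ pairing, leaving $\bint{\mathcal{A}\esigma}{\esigma}$ plus a sum of face terms equal to $-\bint{\mathcal{A}(\deltasigma)}{\esigma}$. Adding in the vanishing quantity $\bintEh{\underline{\boldsymbol{\sigma}}\n-\widehat{\underline{\boldsymbol{\sigma}}}_h\n}{\euhat}$, all the face terms collapse to $\bintEh{\esigma\n-(\underline{\boldsymbol{\sigma}}\n-\widehat{\underline{\boldsymbol{\sigma}}}_h\n)}{\eu-\euhat}$.

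The heart of the argument is to expand this face integrand. Using $\esigma=\Piv\underline{\boldsymbol{\sigma}}-\underline{\boldsymbol{\sigma}}_h$ together with the definition \eqref{numerical_trace}, the $\underline{\boldsymbol{\sigma}}_h\n$ contributions cancel and one is left with $(\Piv\underline{\boldsymbol{\sigma}})\n-\underline{\boldsymbol{\sigma}}\n-\tau(\Pim\boldsymbol{u}_h-\widehat{\boldsymbol{u}}_h)$. The key algebraic identity is $\Pim\boldsymbol{u}_h-\widehat{\boldsymbol{u}}_h=-(\Pim\eu-\euhat)-\Pim(\deltau)$, which follows by linearity of $\Pim$ from $\Pim\eu=\Pim\Piw\boldsymbol{u}-\Pim\boldsymbol{u}_h$, $\euhat=\Pim\boldsymbol{u}-\widehat{\boldsymbol{u}}_h$, and $\Pim\Piw\boldsymbol{u}-\Pim\boldsymbol{u}=-\Pim(\deltau)$. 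Substituting it reorganizes the face term into exactly $-T_1+\bintEh{\tau(\Pim\eu-\euhat)}{\eu-\euhat}+T_2$. Finally, since $\tau$ is a constant and $\Pim\eu-\euhat\in\boldsymbol{M}_h$, the elementwise $L^2$-orthogonality of $\Pim$ on each $\partial K$ allows replacing $\eu-\euhat$ by $\Pim\eu-\euhat$ in that stabilization term, which produces the claimed identity.

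I expect the only genuine obstacles to be bookkeeping ones: keeping the outward-normal sign conventions consistent when summing over $\partial\Oh$, correctly using $\euhat|_{\partial\Omega}=0$ to discard the boundary contribution in \eqref{error_equation_3}, and interpreting $\Pim$ elementwise when it acts on the two-valued traces of $\boldsymbol{W}_h$-functions such as $\eu$. None of these is deep, but they are precisely the places where a sign slip would derail the computation, so I would write them out carefully.
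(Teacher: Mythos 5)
Your proposal is correct and follows essentially the same route as the paper: test the error equations with $(\esigma,\eu,\euhat)$, use \eqref{error_equation_4} to extend \eqref{error_equation_3} to the whole skeleton, integrate by parts to reduce everything to the single face term $\bintEh{\eu-\euhat}{\esigma\n-(\underline{\boldsymbol{\sigma}}\n-\widehat{\underline{\boldsymbol{\sigma}}}_h\n)}$, expand it via the definition \eqref{numerical_trace} and the identity $\Pim\boldsymbol{u}_h-\widehat{\boldsymbol{u}}_h=-(\Pim\eu-\euhat)-\Pim(\deltau)$, and finally use the $L^2$-orthogonality of $\Pim$ to symmetrize the stabilization term. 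All signs and the decomposition into $T_1$ and $T_2$ check out against the paper's computation.
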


\begin{proof}
By the error equation \eqref{error_equation_4} we know that $\euhat = 0$ on $\partial \Omega$. This implies that
\[
\bintEhb{\euhat}{\underline{\boldsymbol{\sigma}} \n - \widehat{\underline{\boldsymbol{\sigma}}}_h \n} = 0.
\]
Now taking $(\underline{\boldsymbol{v}}, \boldsymbol{w}, \boldsymbol{\mu}) = (\esigma, \eu, \euhat)$ in error equations \eqref{error_equation_1} - \eqref{error_equation_3} and adding these equations together with the above identity, we obtain, after some algebraic manipulation, 
\begin{equation}\label{energy_1}
\bint{\mathcal{A} \esigma}{\esigma} + \bintEh{\eu - \euhat}{\esigma \n - (\underline{\boldsymbol{\sigma}} \n - \widehat{\underline{\boldsymbol{\sigma}}}_h \n)} = - \bint{\mathcal{A}(\deltasigma)}{\esigma}.
\end{equation}
Now we work with the second term on the left hand side, 
\begin{align*}
 \esigma \n - (\underline{\boldsymbol{\sigma}} \n - \widehat{\underline{\boldsymbol{\sigma}}}_h \n) &= \Piv \underline{\boldsymbol{\sigma}} \n - \underline{\boldsymbol{\sigma}}_h \n - \underline{\boldsymbol{\sigma}} \n + \widehat{\underline{\boldsymbol{\sigma}}}_h \n 
\intertext{by the definition of the numerical trace \eqref{numerical_trace}, }
& = -(\deltasigma) \n - \tau(\Pim \boldsymbol{u}_h - \widehat{\boldsymbol{u}}_h), \\
& = -(\deltasigma) \n + \tau (\Pim \eu - \euhat) - \tau(\Pim( \Piw \boldsymbol{u} - \boldsymbol{u})), 
\end{align*}
the last step is by the definition of $\eu, \euhat$. Inserting the above identity into \eqref{energy_1}, moving terms around, we have
\[
\bint{\mathcal{A} \esigma}{\esigma}  + \bintEh{\eu - \euhat}{\tau(\Pim \eu - \euhat)} = -\bint{\mathcal{A}(\deltasigma)}{\esigma}+ T_1 - T_2.
\]
Finally, notice that on each $F \in \partial \Oh$, $\tau(\Pim \eu - \euhat)|_F \in \boldsymbol{M}(F)$, so we have
\[
\bintEh{\eu - \euhat}{\tau(\Pim \eu - \euhat)} = \bintEh{\Pim \eu - \euhat}{\tau(\Pim \eu - \euhat)}.
\]
This completes the proof.
\end{proof}

From the above energy argument we can see that we need to bound $T_1, T_2$ in order to have an estimate for $\esigma$. Next we present the estimates for these two terms:
\begin{lemma}\label{Two_terms}
If the parameter $\tau = \mathcal{O}(h^{-1})$, we have
\begin{align*}
T_1 & \le C h^{t} \|\underline{\boldsymbol{\sigma}}\|_{t, \Omega} \, (\|\tau^{\frac12}(\Pim \eu - \euhat)\|_{\partial \Oh} + \|\underline{\boldsymbol{\epsilon}} (\eu)\|_{\Oh}) \\
T_2 & \le C h^{s-1} \|\boldsymbol{u}\|_{s, \Omega} \, \|\tau^{\frac12}(\Pim \eu - \euhat)\|_{\partial \Oh} ,
\end{align*}
for all $1 \le t \le k+1, 1 \le s \le k+2$.
\end{lemma}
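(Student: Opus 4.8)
The plan is to treat $T_1$ and $T_2$ in the same spirit: each is a pairing over $\partial\Oh$ of $\eu-\euhat$ against a residual, and in each case I would split the second argument using the $L^{2}(F)$-orthogonality of the projections $\Pim$ and $\Piv$. A piece of the residual lying in $\boldsymbol{M}(F)$ on each face pairs only with $\Pim\eu-\euhat$ (since $\eu-\Pim\eu\perp\boldsymbol{M}(F)$); this factor is exactly the quantity controlled by the left-hand side of Proposition~\ref{energy_argument}, so we finish with a weighted Cauchy--Schwarz inequality, paying $\tau^{-\frac12}=\mathcal{O}(h^{\frac12})$. A piece orthogonal to $\boldsymbol{M}(F)$ annihilates $\euhat$ and $\Pim\eu$, leaving $\eu$ alone, which is where Lemma~\ref{symmetric_grad} must be used.

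For $T_2$, since $\tau$ is a constant and both $\euhat$ and $\tau(\Pim(\deltau))$ lie in $\boldsymbol{M}(F)$, we may replace $\eu$ by $\Pim\eu$, so $T_2=\bintEh{\Pim\eu-\euhat}{\tau(\Pim(\deltau))}$. A Cauchy--Schwarz inequality with weight $\tau^{\frac12}$, the $L^{2}(F)$-stability of $\Pim$, the bound $\tau^{\frac12}\le Ch^{-\frac12}$, and the approximation estimate \eqref{classical_ineq_4} then give
\[
T_2\le \|\tau^{\frac12}(\Pim\eu-\euhat)\|_{\partial\Oh}\,\Big(\sum_{K\in\Oh}\tau\,\|\deltau\|_{\partial K}^{2}\Big)^{\frac12}\le Ch^{s-1}\|\boldsymbol{u}\|_{s,\Omega}\,\|\tau^{\frac12}(\Pim\eu-\euhat)\|_{\partial\Oh},
\]
for $1\le s\le k+2$.

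For $T_1$, write $\eu-\euhat=(\Pim\eu-\euhat)+(\eu-\Pim\eu)$. The contribution of $\Pim\eu-\euhat$ is handled as above: Cauchy--Schwarz with weight $\tau^{\frac12}$, the bound $\tau^{-\frac12}\le Ch^{\frac12}$, and \eqref{classical_ineq_5} bound it by $Ch^{t}\|\underline{\boldsymbol{\sigma}}\|_{t,\Omega}\,\|\tau^{\frac12}(\Pim\eu-\euhat)\|_{\partial\Oh}$. For the contribution of $\eu-\Pim\eu$, note that $(\Piv\underline{\boldsymbol{\sigma}})\n|_{F}\in\boldsymbol{M}(F)$, so by $L^{2}(F)$-orthogonality $\bintK{\eu-\Pim\eu}{\underline{\boldsymbol{\sigma}}\n-(\Piv\underline{\boldsymbol{\sigma}})\n}=\bintK{\eu}{\underline{\boldsymbol{\sigma}}\n-\Pim(\underline{\boldsymbol{\sigma}}\n)}$. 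Since every $\boldsymbol{\Lambda}\in\Upsilon(K)$ is affine, its trace on a face $F$ lies in $\boldsymbol{P}_{1}(F)\subset\boldsymbol{P}_{k}(F)=\boldsymbol{M}(F)$ because $k\ge1$, hence $\bintK{\boldsymbol{\Lambda}}{\underline{\boldsymbol{\sigma}}\n-\Pim(\underline{\boldsymbol{\sigma}}\n)}=0$; by Lemma~\ref{symmetric_grad} we may choose $\boldsymbol{\Lambda}$, after normalizing its (free) constant part so that $\int_{K}(\eu+\boldsymbol{\Lambda})=0$, with $\|\nabla(\eu+\boldsymbol{\Lambda})\|_{K}\le C\|\underline{\boldsymbol{\epsilon}}(\eu)\|_{K}$. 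A Poincar\'{e}--Wirtinger inequality followed by a scaled trace inequality then gives $\|\eu+\boldsymbol{\Lambda}\|_{\partial K}\le Ch^{\frac12}\|\underline{\boldsymbol{\epsilon}}(\eu)\|_{K}$, so with \eqref{classical_ineq_7} and a Cauchy--Schwarz in $K$,
\[
\sum_{K\in\Oh}\bintK{\eu}{\underline{\boldsymbol{\sigma}}\n-\Pim(\underline{\boldsymbol{\sigma}}\n)}=\sum_{K\in\Oh}\bintK{\eu+\boldsymbol{\Lambda}}{\underline{\boldsymbol{\sigma}}\n-\Pim(\underline{\boldsymbol{\sigma}}\n)}\le Ch^{t}\|\underline{\boldsymbol{\sigma}}\|_{t,\Omega}\,\|\underline{\boldsymbol{\epsilon}}(\eu)\|_{\Oh}.
\]
Adding the two contributions yields the claimed bound for $T_1$ with $1\le t\le k+1$.

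\textbf{Main difficulty.} The only genuinely delicate step is the term $\bintK{\eu}{\underline{\boldsymbol{\sigma}}\n-\Pim(\underline{\boldsymbol{\sigma}}\n)}$, where $\eu$ appears with no projection in front: one cannot bound it naively by $\|\eu\|_{\partial K}$ and then by $\|\eu\|_{K}$, since no $L^{2}$-control of $\eu$ is available at this stage of the argument. The resolution is to subtract a rigid-body mode --- which is admissible precisely because $\boldsymbol{M}(F)=\boldsymbol{P}_{k}(F)$ with $k\ge1$ already contains all affine functions on $F$ --- and then invoke the elementwise Korn inequality of Lemma~\ref{symmetric_grad} to trade $\nabla\eu$ for $\underline{\boldsymbol{\epsilon}}(\eu)$. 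One must also verify that the $\boldsymbol{\Lambda}$ furnished by Lemma~\ref{symmetric_grad} can be normalized by adjusting its free constant vector so that Poincar\'{e} applies; this is harmless since $\Upsilon(K)$ contains the constants.
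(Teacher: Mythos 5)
Your proposal is correct and follows essentially the same route as the paper: the same splitting of $T_1$ into the $\Pim\eu-\euhat$ and $\eu-\Pim\eu$ contributions, the same reduction of the latter to a pairing of $\eu+\boldsymbol{\Lambda}$ against $\underline{\boldsymbol{\sigma}}\n-\Pim(\underline{\boldsymbol{\sigma}}\n)$ via the inclusion $\Upsilon(K)|_F\subset\boldsymbol{M}(F)$ for $k\ge1$, and the same use of Lemma~\ref{symmetric_grad} with Poincar\'e and the scaled trace inequality. The only cosmetic difference is that the paper subtracts the cell average $\overline{(\eu+\boldsymbol{\Lambda})}$ explicitly rather than absorbing it into the free constant part of $\boldsymbol{\Lambda}$, which is equivalent.
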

\begin{proof}
We first bound $T_2$. We have
\begin{align*}
T_2 &=  \bintEh{\eu - \euhat}{\tau (\Pim (\deltau))} =  \bintEh{\Pim \eu - \euhat}{\tau (\Pim (\deltau))} \\
& =  \bintEh{\Pim \eu - \euhat}{\tau (\deltau)} \\
&\le \|\tau^{\frac12}(\Pim \eu - \euhat)\|_{\partial \Oh} \tau^{\frac12} \|\boldsymbol{u} - \Piw \boldsymbol{u}\|_{\partial \Oh}\\
& \le C h^{s} (\tau^{\frac12} h^{-\frac12}) \|\tau^{\frac12}(\Pim \eu - \euhat)\|_{\partial \Oh} \|\boldsymbol{u}\|_{s, \Omega}, 
\end{align*}
for all $1 \le s \le k+2$. The last step we applied the inequality \eqref{classical_ineq_4}. 

The estimate for $T_1$ is much more sophisticated. We first split $T_1$ into two parts:
\[T_1 = T_{11} + T_{12},\]
where
\begin{align*}
T_{11} & := \bintEh{\Pim \eu - \euhat}{\underline{\boldsymbol{\sigma}} \n - (\Piv \underline{\boldsymbol{\sigma}}) \n}, \\
T_{12} &:= \bintEh{\eu - \Pim \eu}{\underline{\boldsymbol{\sigma}} \n - (\Piv \underline{\boldsymbol{\sigma}}) \n}.
\end{align*}

For $T_{11}$, we simply apply the Cauchy-Schwarz inequality,
\begin{align*}
T_{11} &\le \|\tau^{\frac12}(\Pim \eu - \euhat)\|_{\partial \Oh} \, \tau^{-\frac12}\|\underline{\boldsymbol{\sigma}} \n - (\Piv \underline{\boldsymbol{\sigma}}) \n\|_{\partial \Oh}\\
&\le C h^{t} (\tau^{-\frac12}h^{-\frac12}) \|\underline{\boldsymbol{\sigma}}\|_{t, \Omega} \|\tau^{\frac12}(\Pim \eu - \euhat)\|_{\partial \Oh},
\end{align*}
for all $1 \le t \le k+1$. Here we used the inequality \eqref{classical_ineq_5}.

Now we work on $T_{12}$. Using the $L^2$-orthogonal property of the projection $\Pim$, we can write 
\begin{align*}
T_{12} &= \bintEh{\eu - \Pim \eu}{\underline{\boldsymbol{\sigma}} \n - (\Piv \underline{\boldsymbol{\sigma}}) \n} \\
&= \bintEh{\eu - \Pim \eu}{\underline{\boldsymbol{\sigma}} \n } \\
 & = \bintEh{\eu - \Pim \eu}{\underline{\boldsymbol{\sigma}} \n - \Pim (\underline{\boldsymbol{\sigma}} \n)}, 
\intertext{by the fact $\Piv \underline{\boldsymbol{\sigma}} \n|_{F}, \Pim (\underline{\boldsymbol{\sigma}} \n)|_{F} \in \boldsymbol{M}(F)$ for all $F \in \partial \Oh$,}
T_{12} & =\bintEh{\eu}{\underline{\boldsymbol{\sigma}} \n - \Pim (\underline{\boldsymbol{\sigma}} \n)}, \quad \text{since $\Pim \eu|_F \in \boldsymbol{M}(F), \forall F \in \partial \Oh$,} \\
& = \bintEh{\eu + \boldsymbol{\Lambda}}{\underline{\boldsymbol{\sigma}} \n - \Pim (\underline{\boldsymbol{\sigma}} \n)},
\end{align*}
where $\boldsymbol{\Lambda} \in \boldsymbol{L}^2(\Omega)$ is any vector-valued function in $\Upsilon_h$. 
Notice here the last step holds only if $\Upsilon_h|_F \in \boldsymbol{M}(F), \; \forall F \in \partial \Oh$. This is true if $k \ge 1$.  Next, on each $K \in \Oh$, if we denote $\overline{\boldsymbol{u}}$ to be the average of $\boldsymbol{u}$ over $K$, then we have
\begin{align*}
\bintK{\eu + \boldsymbol{\Lambda}}{\underline{\boldsymbol{\sigma}} \n - \Pim (\underline{\boldsymbol{\sigma}} \n)} &= \bintK{\eu + \boldsymbol{\Lambda} - \overline{(\eu + \boldsymbol{\Lambda})}}{\underline{\boldsymbol{\sigma}} \n - \Pim (\underline{\boldsymbol{\sigma}} \n)}\\
& \le \|\eu + \boldsymbol{\Lambda} - \overline{(\eu + \boldsymbol{\Lambda})}\|_{\partial K} \|\underline{\boldsymbol{\sigma}} \n - \Pim (\underline{\boldsymbol{\sigma}} \n)\|_{\partial K},
\intertext{by the standard inequalities \eqref{classical_ineq_6}, \eqref{classical_ineq_7},}
\bintK{\eu + \boldsymbol{\Lambda}}{\underline{\boldsymbol{\sigma}} \n - \Pim (\underline{\boldsymbol{\sigma}} \n)} &\le C h^{t-1} \|\underline{\boldsymbol{\sigma}}\|_{t, K} \|\eu + \boldsymbol{\Lambda} - \overline{(\eu + \boldsymbol{\Lambda})}\|_{K}\\
&\le C h^t \|\underline{\boldsymbol{\sigma}}\|_{t, K} \|\nabla (\eu + \boldsymbol{\Lambda})\|_K,
\intertext{for all $1 \le t \le k+1$. The last step is by the  Poincar\'{e} inequality. 
Notice that the constant $C$ in above inequality is independent of $\boldsymbol{\Lambda}\in \Upsilon_h$. 
Now applying the Lemma \ref{symmetric_grad}, yields, }
\bintK{\eu + \boldsymbol{\Lambda}}{\underline{\boldsymbol{\sigma}} \n - \Pim (\underline{\boldsymbol{\sigma}} \n)} & \le C h^t \|\underline{\boldsymbol{\sigma}}\|_{t, K} \|\underline{\boldsymbol{\epsilon}} (\eu)\|_K,
\end{align*}
Sum over all $K \in \Oh$, we have 
\[
T_{12} \le C h^t \|\underline{\boldsymbol{\sigma}}\|_{t, \Omega} \|\underline{\boldsymbol{\epsilon}} (\eu)\|_{\Oh},
\]
for all $1 \le t \le k+1$. We complete the proof by combining the estimates for $T_2, T_{11}, T_{12}$.
\end{proof}
Combining Lemma \ref{Two_terms} and Proposition \ref{energy_argument}, we obtain the following estimate.
\begin{corollary}\label{estimate_1}
%If the coefficient $\mathcal{A}$ is uniformly bounded from above and below, 
If the parameter $\tau = \mathcal{O}(h^{-1})$, then we have
\begin{align*}
\|\esigma\|^2_{L^{2}(\mathcal{A},\Omega)} &+ \|\tau^{\frac12}(\Pim \eu - \euhat)\|^2_{\partial \Oh} \\
&\le C \left( h^{2t} \|\underline{\boldsymbol{\sigma}}\|^2_{t, \Omega} + h^{2(s-1)} \|\boldsymbol{u}\|^2_{s, \Omega} + h^t \|\underline{\boldsymbol{\sigma}}\|_{t, \Omega} \|\underline{\boldsymbol{\epsilon}} (\eu)\|_{\Oh}\right),
\end{align*}
for all $1 \le s \le k+2, 1 \le t \le k+1$, the constant $C$ is independent of $h$ and exact solution.
\end{corollary}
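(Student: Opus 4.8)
The plan is to combine the energy identity of Proposition~\ref{energy_argument} with the two bounds of Lemma~\ref{Two_terms} and then absorb the controllable contributions into the left-hand side by Young's inequality. Recall that, since $\tau>0$ and $\mathcal{A}$ is symmetric positive definite, the left-hand side of the identity in Proposition~\ref{energy_argument} equals exactly $\|\esigma\|^2_{L^{2}(\mathcal{A},\Omega)} + \|\tau^{\frac12}(\Pim \eu - \euhat)\|^2_{\partial \Oh}$, so it suffices to estimate the three terms $-\bint{\mathcal{A}(\deltasigma)}{\esigma}$, $T_1$ and $T_2$ appearing on the right.

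For the first term I would apply the Cauchy--Schwarz inequality in the $\mathcal{A}$-weighted $L^2$ inner product (legitimate because $\mathcal{A}$ is bounded, symmetric and positive definite), obtaining $|\bint{\mathcal{A}(\deltasigma)}{\esigma}| \le \|\underline{\boldsymbol{\sigma}} - \Piv \underline{\boldsymbol{\sigma}}\|_{L^{2}(\mathcal{A},\Omega)}\,\|\esigma\|_{L^{2}(\mathcal{A},\Omega)}$; the boundedness of $\mathcal{A}$ together with the approximation estimate \eqref{classical_ineq_2} gives $\|\underline{\boldsymbol{\sigma}} - \Piv \underline{\boldsymbol{\sigma}}\|_{L^{2}(\mathcal{A},\Omega)} \le C h^{t}\|\underline{\boldsymbol{\sigma}}\|_{t,\Omega}$ for $1\le t\le k+1$, and a weighted Young inequality then bounds this term by $\tfrac14\|\esigma\|^2_{L^{2}(\mathcal{A},\Omega)} + C h^{2t}\|\underline{\boldsymbol{\sigma}}\|^2_{t,\Omega}$.

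For $T_1$ and $T_2$ I would simply invoke Lemma~\ref{Two_terms}. In both bounds the factor $\|\tau^{\frac12}(\Pim \eu - \euhat)\|_{\partial \Oh}$ is split off by Young's inequality, so that $h^{t}\|\underline{\boldsymbol{\sigma}}\|_{t,\Omega}\,\|\tau^{\frac12}(\Pim \eu - \euhat)\|_{\partial \Oh} \le \tfrac18\|\tau^{\frac12}(\Pim \eu - \euhat)\|^2_{\partial \Oh} + C h^{2t}\|\underline{\boldsymbol{\sigma}}\|^2_{t,\Omega}$ and likewise $h^{s-1}\|\boldsymbol{u}\|_{s,\Omega}\,\|\tau^{\frac12}(\Pim \eu - \euhat)\|_{\partial \Oh} \le \tfrac18\|\tau^{\frac12}(\Pim \eu - \euhat)\|^2_{\partial \Oh} + C h^{2(s-1)}\|\boldsymbol{u}\|^2_{s,\Omega}$. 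The only remaining piece, namely the contribution $C h^{t}\|\underline{\boldsymbol{\sigma}}\|_{t,\Omega}\,\|\underline{\boldsymbol{\epsilon}} (\eu)\|_{\Oh}$ coming from the second half of the bound on $T_1$, I would leave on the right-hand side untouched, since at this stage we have no a priori control on $\|\underline{\boldsymbol{\epsilon}} (\eu)\|_{\Oh}$.

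Collecting everything, the $\tfrac14\|\esigma\|^2_{L^{2}(\mathcal{A},\Omega)}$ and the two $\tfrac18\|\tau^{\frac12}(\Pim \eu - \euhat)\|^2_{\partial \Oh}$ contributions are moved to the left-hand side, which leaves precisely the asserted inequality. There is no real difficulty beyond bookkeeping; the one point worth flagging is that the cross term $h^{t}\|\underline{\boldsymbol{\sigma}}\|_{t,\Omega}\,\|\underline{\boldsymbol{\epsilon}} (\eu)\|_{\Oh}$ is genuinely not absorbable at this step and must be propagated, which is exactly where a subsequent duality argument together with the elementwise Korn inequality (Lemma~\ref{symmetric_grad}) will be needed in order to control $\eu$, and hence, in turn, to close the estimate for $\esigma$.
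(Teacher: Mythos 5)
Your proposal is correct and is exactly the argument the paper intends: the authors omit the proof but state that it follows from the identity in Proposition~\ref{energy_argument}, the bounds of Lemma~\ref{Two_terms}, Cauchy--Schwarz, and the weighted Young inequality, which is precisely your bookkeeping. Your observation that the cross term $h^{t}\|\underline{\boldsymbol{\sigma}}\|_{t,\Omega}\|\underline{\boldsymbol{\epsilon}}(\eu)\|_{\Oh}$ cannot be absorbed at this stage and must be handled later via Lemma~\ref{estimate_gradu} is also consistent with how the paper proceeds.
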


The proof is omitted. One can obtain the above result by the Cauchy-Schwarz inequality and weighted Young's inequality. Finally, we can finish the 
estimate for $\esigma$ by the following estimate for $\underline{\boldsymbol{\epsilon}} (\eu)$:
\begin{lemma}\label{estimate_gradu}
Under the same assumption as Theorem \ref{estimate_1}, we have
\[
\|\underline{\boldsymbol{\epsilon}} (\eu)\|_{\Oh} \le C \left(h^t \|\underline{\boldsymbol{\sigma}}\|_{t, \Omega} 
+  \|\esigma\|_{L^{2}(\mathcal{A},\Omega)} +  \|\tau^{\frac12}(\Pim \eu - \euhat)\|_{\partial \Oh}\right),
\]
for all $0 \le t \le k+1$.
\end{lemma}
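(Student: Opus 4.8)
The plan is to test the first error equation \eqref{error_equation_1} with the special choice $\underline{\boldsymbol{v}} = \underline{\boldsymbol{\epsilon}}_{h}(\eu)$. This is admissible because $\eu|_{K} \in \boldsymbol{W}(K) = \boldsymbol{P}_{k+1}(K)$, so the inclusion $\underline{\boldsymbol{\epsilon}}(\boldsymbol{W}(K)) \subset \underline{\boldsymbol{V}}(K)$ used already in Step~1 guarantees $\underline{\boldsymbol{\epsilon}}_{h}(\eu) \in \underline{\boldsymbol{V}}_{h}$. After this substitution I would integrate by parts elementwise on $\bint{\eu}{\nabla\cdot\underline{\boldsymbol{\epsilon}}_{h}(\eu)}$; since $\underline{\boldsymbol{\epsilon}}_{h}(\eu)$ is symmetric, the volume part produces exactly $\|\underline{\boldsymbol{\epsilon}}(\eu)\|^{2}_{\Oh}$, and rearranging gives the identity
\begin{equation*}
\|\underline{\boldsymbol{\epsilon}}(\eu)\|^{2}_{\Oh} = \bint{\mathcal{A}\esigma}{\underline{\boldsymbol{\epsilon}}_{h}(\eu)} + \bint{\mathcal{A}(\deltasigma)}{\underline{\boldsymbol{\epsilon}}_{h}(\eu)} + \bintEh{\eu - \euhat}{\underline{\boldsymbol{\epsilon}}_{h}(\eu)\,\n}.
\end{equation*}
The two volume terms are routine: boundedness of $\mathcal{A}$ together with Cauchy--Schwarz bounds them by $C\big(\|\esigma\|_{L^{2}(\mathcal{A},\Omega)} + \|\deltasigma\|_{\Omega}\big)\,\|\underline{\boldsymbol{\epsilon}}(\eu)\|_{\Oh}$, and the approximation estimate \eqref{classical_ineq_2} turns $\|\deltasigma\|_{\Omega}$ into $C h^{t}\|\underline{\boldsymbol{\sigma}}\|_{t,\Omega}$ for $0 \le t \le k+1$.

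The boundary term is the crux, and I expect it to be the main obstacle, because the target bound contains $\|\tau^{\frac12}(\Pim\eu - \euhat)\|_{\partial\Oh}$ rather than the uncontrolled $\|\eu - \euhat\|_{\partial\Oh}$. The resolution exploits that $\underline{\boldsymbol{\epsilon}}_{h}(\eu)\n|_{F} \in \boldsymbol{M}(F)$ (from $\underline{\boldsymbol{V}}(K)\n|_{F} \subset \boldsymbol{M}(F)$, which is where $k\ge1$ is used): by the $L^{2}(\Eh)$-orthogonality of $\Pim$ one may replace $\eu$ by $\Pim\eu$ for free, so the term equals $\bintEh{\Pim\eu - \euhat}{\underline{\boldsymbol{\epsilon}}_{h}(\eu)\,\n}$. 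A $\tau$-weighted Cauchy--Schwarz bounds this by $\|\tau^{\frac12}(\Pim\eu - \euhat)\|_{\partial\Oh}\,\|\tau^{-\frac12}\underline{\boldsymbol{\epsilon}}_{h}(\eu)\n\|_{\partial\Oh}$; the inverse trace inequality \eqref{classical_ineq_6} (legitimate since $\underline{\boldsymbol{\epsilon}}(\eu)|_{K}$ is a polynomial) gives $\|\underline{\boldsymbol{\epsilon}}_{h}(\eu)\n\|_{\partial\Oh} \le C h^{-\frac12}\|\underline{\boldsymbol{\epsilon}}(\eu)\|_{\Oh}$, and the hypothesis $\tau = \mathcal{O}(h^{-1})$ makes $\tau^{-\frac12} h^{-\frac12}$ uniformly bounded, so the boundary term is $\le C\|\tau^{\frac12}(\Pim\eu - \euhat)\|_{\partial\Oh}\,\|\underline{\boldsymbol{\epsilon}}(\eu)\|_{\Oh}$.

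Collecting the three estimates, the right-hand side of the identity is at most $C\big(\|\esigma\|_{L^{2}(\mathcal{A},\Omega)} + h^{t}\|\underline{\boldsymbol{\sigma}}\|_{t,\Omega} + \|\tau^{\frac12}(\Pim\eu - \euhat)\|_{\partial\Oh}\big)\,\|\underline{\boldsymbol{\epsilon}}(\eu)\|_{\Oh}$; dividing by $\|\underline{\boldsymbol{\epsilon}}(\eu)\|_{\Oh}$ (the inequality being trivial when it vanishes) yields the claim. Apart from careful sign bookkeeping in the tensor integration by parts and checking that the $h^{1/2}$ coming from $\tau^{-\frac12}$ exactly cancels the $h^{-\frac12}$ from the inverse trace inequality, the argument uses only the tools of Step~1, the listed classical inequalities, and the boundedness of $\mathcal{A}$.
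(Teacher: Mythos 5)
Your proof is correct and follows the paper's argument essentially verbatim: test \eqref{error_equation_1} with $\underline{\boldsymbol{\epsilon}}_{h}(\eu)$, use the symmetry of $\underline{\boldsymbol{\epsilon}}_{h}(\eu)$ to turn the integrated-by-parts volume term into $\|\underline{\boldsymbol{\epsilon}}(\eu)\|^{2}_{\Oh}$, replace $\eu$ by $\Pim\eu$ on the boundary via $\underline{\boldsymbol{V}}(K)\n|_{F}\subset\boldsymbol{M}(F)$, and cancel the inverse-trace factor $h^{-1/2}$ against $\tau^{-1/2}$. The only quibble is your parenthetical that this inclusion is ``where $k\ge1$ is used'': it in fact holds for all $k\ge0$, and the hypothesis $k\ge1$ is needed elsewhere (e.g.\ for the Korn-inequality step in the bound of $T_{12}$ in Lemma \ref{Two_terms}); this does not affect the validity of your argument.
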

\begin{proof}
Notice that $\underline{\boldsymbol{\epsilon}} (\eu) \in \underline{\boldsymbol{V}}_h$, so we can take $\underline{\boldsymbol{v}}=\underline{\boldsymbol{\epsilon}} (\eu)$ in the error equation \eqref{error_equation_1}, after integrating by parts, we have:
\[
\bint{\mathcal{A} \esigma}{\underline{\boldsymbol{\epsilon}} (\eu)} - \bint{ \nabla \eu}{ \underline{\boldsymbol{\epsilon}} (\eu)}
+ \bintEh{\eu - \euhat}{\underline{\boldsymbol{\epsilon}} (\eu) \n} =  \bint{\mathcal{A} (\Piv \underline{\boldsymbol{\sigma}} - \underline{\boldsymbol{\sigma}}) }{\underline{\boldsymbol{\epsilon}} (\eu)}.
\]
Notice that $\underline{\boldsymbol{\epsilon}} (\eu) \in \underline{\boldsymbol{V}}_h$ and it is symmetric, we have 
\[
\bint{ \nabla \eu}{ \underline{\boldsymbol{\epsilon}} (\eu)} = \| \underline{\boldsymbol{\epsilon}} (\eu)\|^2_{\Oh}, \quad 
\bintEh{\eu - \euhat}{\underline{\boldsymbol{\epsilon}} (\eu) \n} = \bintEh{\Pim \eu - \euhat}{\underline{\boldsymbol{\epsilon}} (\eu) \n}.
\]
Inserting these two identities into the first equation, we have
\begin{align*}
\| \underline{\boldsymbol{\epsilon}} (\eu)\|^2_{\Oh} &= \bint{\mathcal{A} \esigma}{\underline{\boldsymbol{\epsilon}} (\eu)} + \bintEh{\Pim \eu - \euhat}{\underline{\boldsymbol{\epsilon}} (\eu) \n}\\
& \quad + \bint{\mathcal{A} (\underline{\boldsymbol{\sigma}} - \Piv \underline{\boldsymbol{\sigma}}) }{\underline{\boldsymbol{\epsilon}} (\eu)}\\
& \le C \|\esigma\|_{L^{2}(\mathcal{A},\Omega)} \| \underline{\boldsymbol{\epsilon}} (\eu)\|_{\Oh} + C \tau^{-\frac12}  \|\tau^{\frac12}(\Pim \eu - \euhat)\|_{\partial \Oh} \|\underline{\boldsymbol{\epsilon}} (\eu)\n\|_{\partial \Oh} \\
& \quad + C h^t \|\underline{\boldsymbol{\sigma}}\|_{t, \Omega}\|\underline{\boldsymbol{\epsilon}} (\eu)\|_{\Oh} \\
& \le C \|\esigma\|_{L^{2}(\mathcal{A},\Omega)} \| \underline{\boldsymbol{\epsilon}} (\eu)\|_{\Oh} + C \tau^{-\frac12} h^{-\frac12} \|\tau^{\frac12}(\Pim \eu - \euhat)\|_{\partial \Oh} \|\underline{\boldsymbol{\epsilon}} (\eu)\|_{\Oh} \\
& \quad + C h^t \|\underline{\boldsymbol{\sigma}}\|_{t, \Omega}\|\underline{\boldsymbol{\epsilon}} (\eu)\|_{\Oh} \qquad \text{by inverse inequality \eqref{classical_ineq_6}.}
\end{align*}
The proof is complete by the assumption $\tau = \mathcal{O}(h^{-1})$.
\end{proof}

Finally, combining Lemma \ref{estimate_gradu} and Theorem \ref{estimate_1}, after simple algebraic manipulation, we have our first error estimate:
\begin{corollary}\label{estimate_sigma}
Under the same assumption as in Theorem \ref{estimate_1}, we have
\begin{equation*}
\|\esigma\|_{L^{2}(\mathcal{A},\Omega)} + \|\tau^{\frac12}(\Pim \eu - \euhat)\|_{\partial \Oh} + \| \underline{\boldsymbol{\epsilon}} (\eu)\|_{\Oh} \le C( h^t \|\underline{\boldsymbol{\sigma}}\|_{t, \Omega} + h^{s-1} \|\boldsymbol{u}\|_{s, \Omega}),
\end{equation*}
for all $1 \le t \le k+1, 1 \le s \le k+2$, the constant $C$ is independent of $h$ and exact solution.
\end{corollary}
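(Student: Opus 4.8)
The plan is to obtain Corollary~\ref{estimate_sigma} purely by combining the two estimates just proved --- Corollary~\ref{estimate_1} and Lemma~\ref{estimate_gradu} --- and eliminating the troublesome cross term $h^{t}\|\underline{\boldsymbol{\sigma}}\|_{t,\Omega}\,\|\underline{\boldsymbol{\epsilon}}(\eu)\|_{\Oh}$ on the right-hand side of Corollary~\ref{estimate_1} by a standard absorption argument. To keep the bookkeeping transparent, abbreviate $X=\|\esigma\|_{L^{2}(\mathcal{A},\Omega)}$, $Y=\|\tau^{\frac12}(\Pim\eu-\euhat)\|_{\partial\Oh}$, $Z=\|\underline{\boldsymbol{\epsilon}}(\eu)\|_{\Oh}$, $A=\|\underline{\boldsymbol{\sigma}}\|_{t,\Omega}$ and $B=\|\boldsymbol{u}\|_{s,\Omega}$, so that Corollary~\ref{estimate_1} reads $X^{2}+Y^{2}\le C(h^{2t}A^{2}+h^{2(s-1)}B^{2}+h^{t}AZ)$ and Lemma~\ref{estimate_gradu} reads $Z\le C(h^{t}A+X+Y)$.

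First I would substitute the bound for $Z$ from Lemma~\ref{estimate_gradu} into the $h^{t}AZ$ term, which turns it into $C(h^{2t}A^{2}+h^{t}AX+h^{t}AY)$. Second, I would apply the weighted Young inequality to each of the two mixed products, $h^{t}AX\le\varepsilon X^{2}+C_{\varepsilon}h^{2t}A^{2}$ and $h^{t}AY\le\varepsilon Y^{2}+C_{\varepsilon}h^{2t}A^{2}$, and then choose $\varepsilon$ small enough that the terms $\varepsilon X^{2}+\varepsilon Y^{2}$ are absorbed into the left-hand side $X^{2}+Y^{2}$ of Corollary~\ref{estimate_1}. This yields $X^{2}+Y^{2}\le C(h^{2t}A^{2}+h^{2(s-1)}B^{2})$, hence $X+Y\le C(h^{t}A+h^{s-1}B)$. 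Third, I would feed this estimate for $X+Y$ back into Lemma~\ref{estimate_gradu} to bound $Z$ by the same right-hand side, and then add the three resulting inequalities to get the stated estimate for $X+Y+Z$ over the ranges $1\le t\le k+1$, $1\le s\le k+2$.

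The one point that deserves a moment's attention --- though it is not a real obstacle --- is that the constant $C$ multiplying $h^{t}AZ$ in Corollary~\ref{estimate_1} and the constant in Lemma~\ref{estimate_gradu} must be independent of $h$, which is exactly what is guaranteed by the hypothesis $\tau=\mathcal{O}(h^{-1})$ together with the quasi-uniformity of the mesh (these were already used to derive those two results). This independence is what allows $\varepsilon$ in the absorption step to be fixed once and for all, independently of $h$. Beyond that, the whole argument is just Cauchy--Schwarz and Young's inequality, so I would expect the proof to be short; all the genuine analytic work has already been carried out in Proposition~\ref{energy_argument}, Lemma~\ref{Two_terms} (via the elementwise Korn inequality, Lemma~\ref{symmetric_grad}), and Lemma~\ref{estimate_gradu}.
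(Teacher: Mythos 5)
Your argument is correct and is precisely the ``simple algebraic manipulation'' the paper invokes to pass from Corollary~\ref{estimate_1} and Lemma~\ref{estimate_gradu} to Corollary~\ref{estimate_sigma}: substitute the bound for $\|\underline{\boldsymbol{\epsilon}}(\eu)\|_{\Oh}$ into the cross term, absorb via Young's inequality, and then recover the bound on $\|\underline{\boldsymbol{\epsilon}}(\eu)\|_{\Oh}$ itself. Nothing further is needed.
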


One can see that by taking $t=k+1, s=k+2$, both of the error $\esigma,
\underline{\boldsymbol{\epsilon}}(\eu)$ obtain optimal convergence
rate. Moreover, if we take $\tau=1/h$, we readily obtain the superconvergence
property
\[
 \|h^{\frac12}(\Pim \eu - \euhat)\|_{\partial \Oh}\le C\, h^{k+2},
\]
for smooth solutions. It is this superconvergence property the one which allows
to obtain the optimal convergence in the stress and, as we are going to see next, in the displacement.

{\textbf{Step 3: Estimate of $\eu$}.} Next we use a standard duality argument to get an estimate for $\eu$. First we present an important identity.
\begin{proposition}\label{duality_argument}
Assume that $(\boldsymbol{\phi}, \underline{\boldsymbol{\psi}}) \in \boldsymbol{H}^2(\Omega) \times \underline{\boldsymbol{H}}^1(\Omega)$ is the solution of the adjoint problem \eqref{dual_problem_1}, we have
\begin{alignat*}{1}
\|\eu\|^2_{\Omega} &= \bint{\mathcal{A} \esigma}{\underline{\boldsymbol{\psi}} - \Piv \underline{\boldsymbol{\psi}}} - \bint{\mathcal{A} (\deltasigma)}{\Piv \underline{\boldsymbol{\psi}}}\\
& \quad - \bintEh{\esigma \n - (\underline{\boldsymbol{\sigma}} \n - \widehat{\underline{\boldsymbol{\sigma}}}_h \n)}{\boldsymbol{\phi} - \Piw \boldsymbol{\phi}} + \bintEh{\eu - \euhat}{(\underline{\boldsymbol{\psi}} - \Piv \underline{\boldsymbol{\psi}})\n}.
\end{alignat*}
\end{proposition}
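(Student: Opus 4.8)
The plan is a duality (Aubin--Nitsche) argument of the type standard for HDG methods, with extra care paid to the nonstandard numerical trace \eqref{numerical_trace}. Since $\nabla\cdot\underline{\boldsymbol{\psi}}=\eu$ by \eqref{dual_problem_2}, I would begin with $\|\eu\|_\Omega^2=\bint{\eu}{\nabla\cdot\underline{\boldsymbol{\psi}}}$ and write $\underline{\boldsymbol{\psi}}=\Piv\underline{\boldsymbol{\psi}}+(\underline{\boldsymbol{\psi}}-\Piv\underline{\boldsymbol{\psi}})$. For the projected part, insert $\underline{\boldsymbol{v}}=\Piv\underline{\boldsymbol{\psi}}\in\underline{\boldsymbol{V}}_h$ into the error equation \eqref{error_equation_1} and solve for $\bint{\eu}{\nabla\cdot\Piv\underline{\boldsymbol{\psi}}}$, which produces $-\bint{\mathcal{A}\esigma}{\Piv\underline{\boldsymbol{\psi}}}+\bintEh{\euhat}{(\Piv\underline{\boldsymbol{\psi}})\n}-\bint{\mathcal{A}(\deltasigma)}{\Piv\underline{\boldsymbol{\psi}}}$. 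For the remainder part I would integrate by parts elementwise,
\[
\bint{\eu}{\nabla\cdot(\underline{\boldsymbol{\psi}}-\Piv\underline{\boldsymbol{\psi}})}=-\bint{\nabla\eu}{\underline{\boldsymbol{\psi}}-\Piv\underline{\boldsymbol{\psi}}}+\bintEh{\eu}{(\underline{\boldsymbol{\psi}}-\Piv\underline{\boldsymbol{\psi}})\n},
\]
and observe that the volume term vanishes: $\underline{\boldsymbol{\psi}}-\Piv\underline{\boldsymbol{\psi}}$ is symmetric, so it contracts with $\nabla\eu$ the same way as with $\underline{\boldsymbol{\epsilon}}(\eu)$, and $\underline{\boldsymbol{\epsilon}}(\eu)|_K\in\underline{\boldsymbol{\epsilon}}(\boldsymbol{W}(K))\subset\underline{\boldsymbol{V}}(K)$, so the $L^2$-orthogonality of $\Piv$ applies.

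Next I would assemble the two interface contributions. The key point is that $\bintEh{\euhat}{\underline{\boldsymbol{\psi}}\n}=0$: on an interior face both $\euhat$ and $\underline{\boldsymbol{\psi}}\in\underline{\boldsymbol{H}}^1(\Omega)$ are single-valued while the two outward normals are opposite, and on $\partial\Omega$ one has $\euhat=0$ by \eqref{error_equation_4}. Hence $\bintEh{\euhat}{(\Piv\underline{\boldsymbol{\psi}})\n}=-\bintEh{\euhat}{(\underline{\boldsymbol{\psi}}-\Piv\underline{\boldsymbol{\psi}})\n}$, and adding this to $\bintEh{\eu}{(\underline{\boldsymbol{\psi}}-\Piv\underline{\boldsymbol{\psi}})\n}$ produces exactly the term $\bintEh{\eu-\euhat}{(\underline{\boldsymbol{\psi}}-\Piv\underline{\boldsymbol{\psi}})\n}$ appearing in the claim. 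At this stage $\|\eu\|_\Omega^2$ equals this interface term together with $-\bint{\mathcal{A}\esigma}{\Piv\underline{\boldsymbol{\psi}}}-\bint{\mathcal{A}(\deltasigma)}{\Piv\underline{\boldsymbol{\psi}}}$.

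It remains to convert $-\bint{\mathcal{A}\esigma}{\Piv\underline{\boldsymbol{\psi}}}$. Writing it as $\bint{\mathcal{A}\esigma}{\underline{\boldsymbol{\psi}}-\Piv\underline{\boldsymbol{\psi}}}-\bint{\mathcal{A}\esigma}{\underline{\boldsymbol{\psi}}}$, the first summand is already one of the terms in the claim, so everything reduces to showing
\[
\bint{\mathcal{A}\esigma}{\underline{\boldsymbol{\psi}}}=\bintEh{\esigma\n-(\underline{\boldsymbol{\sigma}}\n-\widehat{\underline{\boldsymbol{\sigma}}}_h\n)}{\boldsymbol{\phi}-\Piw\boldsymbol{\phi}}.
\]
For this I would use $\mathcal{A}\underline{\boldsymbol{\psi}}=\underline{\boldsymbol{\epsilon}}(\boldsymbol{\phi})$ from \eqref{dual_problem_1} and the symmetry of $\esigma$ to get $\bint{\mathcal{A}\esigma}{\underline{\boldsymbol{\psi}}}=\bint{\esigma}{\nabla\boldsymbol{\phi}}$, split $\boldsymbol{\phi}=\Piw\boldsymbol{\phi}+(\boldsymbol{\phi}-\Piw\boldsymbol{\phi})$, apply \eqref{error_equation_2} with $\boldsymbol{\omega}=\Piw\boldsymbol{\phi}$ to the projected part, and integrate by parts on the remainder; the volume term $\bint{\nabla\cdot\esigma}{\boldsymbol{\phi}-\Piw\boldsymbol{\phi}}$ drops out because $\nabla\cdot\underline{\boldsymbol{V}}(K)\subset\boldsymbol{W}(K)$. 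This leaves $\bintEh{\underline{\boldsymbol{\sigma}}\n-\widehat{\underline{\boldsymbol{\sigma}}}_h\n}{\Piw\boldsymbol{\phi}}+\bintEh{\esigma\n}{\boldsymbol{\phi}-\Piw\boldsymbol{\phi}}$, so the identity reduces further to $\bintEh{\underline{\boldsymbol{\sigma}}\n-\widehat{\underline{\boldsymbol{\sigma}}}_h\n}{\boldsymbol{\phi}}=0$.

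I expect this last identity to be the crux, and it is precisely where the special form of the numerical trace enters. On $\partial\Omega$ it holds since $\boldsymbol{\phi}=0$. On interior faces I would show the jump of $\widehat{\underline{\boldsymbol{\sigma}}}_h\n$ vanishes identically: on each face $F$ the stabilization part $\tau(\Pim\boldsymbol{u}_h-\widehat{\boldsymbol{u}}_h)$ lies in $\boldsymbol{M}(F)$ by construction, and since $k\ge1$ the trace $\underline{\boldsymbol{\sigma}}_h\n|_F$ also lies in $\boldsymbol{M}(F)=\boldsymbol{P}_k(F)$, so the jump of $\widehat{\underline{\boldsymbol{\sigma}}}_h\n$ across $F$ belongs to $\boldsymbol{M}(F)$; on the other hand \eqref{error_equation_3}, together with the single-valued normal trace of the exact stress, forces that same jump to be $L^2(F)$-orthogonal to $\boldsymbol{M}(F)$, whence it is zero. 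Consequently $\underline{\boldsymbol{\sigma}}\n-\widehat{\underline{\boldsymbol{\sigma}}}_h\n$ is single-valued across interior faces, and its sum over the two elements sharing $F$, tested against the single-valued $\boldsymbol{\phi}$, cancels. Substituting $\bintEh{\underline{\boldsymbol{\sigma}}\n-\widehat{\underline{\boldsymbol{\sigma}}}_h\n}{\boldsymbol{\phi}}=0$ back through the chain assembles the four terms of the stated identity and finishes the proof.
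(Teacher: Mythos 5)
Your proposal is correct and follows essentially the same route as the paper's proof: both are the standard duality computation that tests the error equations with $\Piv\underline{\boldsymbol{\psi}}$ and $\Piw\boldsymbol{\phi}$, kills the volume terms via the inclusion properties and $L^2$-orthogonality, and eliminates the interface terms using $\euhat|_{\partial\Omega}=0$, $\boldsymbol{\phi}|_{\partial\Omega}=0$, and the strong (not merely weak) continuity of $\widehat{\underline{\boldsymbol{\sigma}}}_h\n$, which you correctly rederive from \eqref{error_equation_3} and the fact that both pieces of the numerical trace lie in $\boldsymbol{M}(F)$. The only difference is cosmetic ordering — the paper inserts the zero term $\bint{\esigma}{\mathcal{A}\underline{\boldsymbol{\psi}}-\underline{\boldsymbol{\epsilon}}(\boldsymbol{\phi})}$ at the outset, whereas you convert $\bint{\mathcal{A}\esigma}{\underline{\boldsymbol{\psi}}}$ at the end.
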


\begin{proof}
By the dual equation \eqref{dual_problem}, we can write
\begin{alignat*}{1}
\|\eu\|^2_{\Omega} & = \bint{\eu}{\nabla \cdot \underline{\boldsymbol{\psi}}} + \bint{\esigma}{\mathcal{A} \underline{\boldsymbol{\psi}} - \underline{\boldsymbol{\epsilon}} (\boldsymbol{\phi})} \\
& = \bint{\eu}{\nabla \cdot \underline{\boldsymbol{\psi}}} + \bint{\mathcal{A} \esigma}{\underline{\boldsymbol{\psi}}} - \bint{\esigma}{\nabla \boldsymbol{\phi}} \\
& = \bint{\eu}{\nabla \cdot \Piv \underline{\boldsymbol{\psi}}} + \bint{\mathcal{A} \esigma}{\Piv \underline{\boldsymbol{\psi}}} - \bint{\esigma}{\nabla \boldsymbol{\Piw \phi}}  \\
&+ \bint{\mathcal{A} \esigma}{\underline{\boldsymbol{\psi}} - \Piv \underline{\boldsymbol{\psi}}}+ \bint{\eu}{\nabla \cdot (\underline{\boldsymbol{\psi}} - \Piv \underline{\boldsymbol{\psi}})}  -  \bint{\esigma}{\nabla (\boldsymbol{\phi} - \Piw \boldsymbol{\phi})},
\intertext{integrating by parts for the last two terms, applying the property of the $L^2$-projections, yields,}
\|\eu\|^2_{\Omega}& = \bint{\eu}{\nabla \cdot \Piv \underline{\boldsymbol{\psi}}} + \bint{\mathcal{A} \esigma}{\Piv \underline{\boldsymbol{\psi}}} - \bint{\esigma}{\nabla \boldsymbol{\Piw \phi}}  \\
&+ \bint{\mathcal{A} \esigma}{\underline{\boldsymbol{\psi}} - \Piv \underline{\boldsymbol{\psi}}}+ \bintEh{\eu}{ (\underline{\boldsymbol{\psi}} - \Piv \underline{\boldsymbol{\psi}})\n}  -  \bintEh{\esigma \n}{\boldsymbol{\phi} - \Piw \boldsymbol{\phi}}.
\intertext{Taking $\underline{\boldsymbol{v}} := \Piv
  \underline{\boldsymbol{\psi}}$ and  $\boldsymbol{\omega} := \Piw
  \boldsymbol{\phi}$ in the error equations \eqref{error_equation_1} and 
\eqref{error_equation_2}, respectively, inserting these two equations into above identity, we obtain}
\|\eu\|^2_{\Omega}& = \bintEh{\euhat}{\Piv
  \underline{\boldsymbol{\psi}} \n} - \bint{\mathcal{A} (\deltasigma)}{\Piv
  \underline{\boldsymbol{\psi}}} - \bintEh{\underline{\boldsymbol{\sigma}}\n - \widehat{\underline{\boldsymbol{\sigma}}}_h \n}{ \boldsymbol{\Piw \phi}}  \\
&+ \bint{\mathcal{A} \esigma}{\underline{\boldsymbol{\psi}} - \Piv \underline{\boldsymbol{\psi}}}+ \bintEh{\eu}{ (\underline{\boldsymbol{\psi}} - \Piv \underline{\boldsymbol{\psi}})\n}  -  \bintEh{\esigma \n}{\boldsymbol{\phi} - \Piw \boldsymbol{\phi}}.
\end{alignat*}
Next, note that by the regularity assumption, $(\underline{\boldsymbol{\psi}}, \boldsymbol{\phi}) \in \underline{\boldsymbol{H}}^2(\Omega) \times \boldsymbol{H}^1(\Omega)$, so the normal component of $\underline{\boldsymbol{\psi}}$ and $\boldsymbol{\phi}$ are continuous across each face $F \in \Eh$. By the equation \eqref{strong_cont}, the normal component of $\widehat{\underline{\boldsymbol{\sigma}}}_h$ is also strongly continuous across each face $F \in \Eh$. This implies that
\begin{alignat*}{2}
-\bintEh{\euhat}{\underline{\boldsymbol{\psi}} \n} & =  -\bintEhb{\euhat}{\underline{\boldsymbol{\psi}} \n} = 0, \quad &&\text{by \eqref{error_equation_4}},  \\
\bintEh{\underline{\boldsymbol{\sigma}}\n - \widehat{\underline{\boldsymbol{\sigma}}}_h \n}{\boldsymbol{\phi}} & = \bintEhb{\underline{\boldsymbol{\sigma}}\n - \widehat{\underline{\boldsymbol{\sigma}}}_h \n}{\boldsymbol{\phi}}= 0 \quad
&&\text{by \eqref{dual_problem_3}}.
\end{alignat*}
Adding these two zero terms into the previous equation, rearranging the terms, we obtain the expression as presented in the proposition.
\end{proof}

As a consequence of the result just proved, 
we can obtain our estimate of $\eu$.
\begin{corollary}\label{estimate_u}
Under the same assumption as in Theorem \ref{estimate_1}, in addition, if the elliptic regularity property \eqref{regularity} holds,  then we have
\[
\|\eu\|_{\Omega} \le C( h^{t+1} \|\underline{\boldsymbol{\sigma}}\|_{t, \Omega} + h^{s} \|\boldsymbol{u}\|_{s, \Omega}),
\]
for $1 \le t \le k+1, 1 \le s \le k+2$.
\end{corollary}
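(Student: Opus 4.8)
The plan is to start from the identity in Proposition~\ref{duality_argument} and bound each of its four terms by $C\bigl(h^{t+1}\|\underline{\boldsymbol{\sigma}}\|_{t,\Omega}+h^{s}\|\boldsymbol{u}\|_{s,\Omega}\bigr)\|\eu\|_{\Omega}$; dividing by $\|\eu\|_{\Omega}$ then gives the claim. Throughout I would use the elliptic regularity \eqref{regularity} to replace $\|\underline{\boldsymbol{\psi}}\|_{1,\Omega}+\|\boldsymbol{\phi}\|_{2,\Omega}$ by $C_{reg}\|\eu\|_{\Omega}$, and feed in the a~priori bounds for $\|\esigma\|_{L^{2}(\mathcal{A},\Omega)}$, $\|\tau^{\frac12}(\Pim\eu-\euhat)\|_{\partial\Oh}$ and $\|\underline{\boldsymbol{\epsilon}}(\eu)\|_{\Oh}$ already furnished by Corollary~\ref{estimate_sigma}, together with $\tau=\mathcal{O}(h^{-1})$. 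The first term, $\bint{\mathcal{A}\esigma}{\underline{\boldsymbol{\psi}}-\Piv\underline{\boldsymbol{\psi}}}$, is handled by Cauchy--Schwarz with $\|\underline{\boldsymbol{\psi}}-\Piv\underline{\boldsymbol{\psi}}\|_{\Omega}\le Ch\|\underline{\boldsymbol{\psi}}\|_{1,\Omega}$ (from \eqref{classical_ineq_2}). For the second term, $\bint{\mathcal{A}(\deltasigma)}{\Piv\underline{\boldsymbol{\psi}}}$, I move $\mathcal{A}$ onto the second slot and split $\mathcal{A}\Piv\underline{\boldsymbol{\psi}}=\mathcal{A}\underline{\boldsymbol{\psi}}-\mathcal{A}(\underline{\boldsymbol{\psi}}-\Piv\underline{\boldsymbol{\psi}})$; the second piece is $\le C\|\deltasigma\|_{\Omega}\,h\|\underline{\boldsymbol{\psi}}\|_{1,\Omega}$, while for the first piece I use the dual equation $\mathcal{A}\underline{\boldsymbol{\psi}}=\underline{\boldsymbol{\epsilon}}(\boldsymbol{\phi})$, the $L^{2}$-orthogonality $\deltasigma\perp\underline{\boldsymbol{V}}_h$, and the inclusion $\underline{\boldsymbol{\epsilon}}(\Piw\boldsymbol{\phi})\in\underline{\boldsymbol{V}}_h$ to rewrite it as $\bint{\deltasigma}{\underline{\boldsymbol{\epsilon}}_h(\boldsymbol{\phi}-\Piw\boldsymbol{\phi})}$, which is $\le C\|\deltasigma\|_{\Omega}\,h\|\boldsymbol{\phi}\|_{2,\Omega}$; note this uses only the boundedness of $\mathcal{A}$, not smoothness.

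For the third term I substitute the identity established inside the proof of Proposition~\ref{energy_argument}, namely $\esigma\n-(\underline{\boldsymbol{\sigma}}\n-\widehat{\underline{\boldsymbol{\sigma}}}_h\n)=-(\deltasigma)\n+\tau(\Pim\eu-\euhat)-\tau\Pim(\deltau)$, and pair each of the three resulting contributions with $\boldsymbol{\phi}-\Piw\boldsymbol{\phi}$ over $\partial\Oh$. Cauchy--Schwarz together with \eqref{classical_ineq_5}, \eqref{classical_ineq_4}, the bound $\|\tau^{\frac12}(\boldsymbol{\phi}-\Piw\boldsymbol{\phi})\|_{\partial\Oh}\le Ch\|\boldsymbol{\phi}\|_{2,\Omega}$ (from \eqref{classical_ineq_4} and $\tau=\mathcal{O}(h^{-1})$), and the bound of Corollary~\ref{estimate_sigma} for $\|\tau^{\frac12}(\Pim\eu-\euhat)\|_{\partial\Oh}$ place all three of these within the target estimate.

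The last term, $\bintEh{\eu-\euhat}{(\underline{\boldsymbol{\psi}}-\Piv\underline{\boldsymbol{\psi}})\n}$, is the crux, and I would treat it exactly as $T_1$ in Lemma~\ref{Two_terms}: split $\eu-\euhat=(\Pim\eu-\euhat)+(\eu-\Pim\eu)$. The first part is bounded by $\tau$-weighted Cauchy--Schwarz, \eqref{classical_ineq_5} with $t=1$, and Corollary~\ref{estimate_sigma}. For the second part I use the inclusions $(\Piv\underline{\boldsymbol{\psi}})\n|_F\in\boldsymbol{M}(F)$ and $\Pim(\underline{\boldsymbol{\psi}}\n)|_F\in\boldsymbol{M}(F)$ together with the $L^{2}$-orthogonality of $\Pim$ to rewrite it as $\bintEh{\eu+\boldsymbol{\Lambda}}{\underline{\boldsymbol{\psi}}\n-\Pim(\underline{\boldsymbol{\psi}}\n)}$ for an arbitrary $\boldsymbol{\Lambda}\in\Upsilon_h$ (legitimate since $\Upsilon_h|_F\subset\boldsymbol{M}(F)$ when $k\ge1$); then on each $K$ I subtract the elementwise average of $\eu+\boldsymbol{\Lambda}$, apply the inverse inequality \eqref{classical_ineq_6}, the trace estimate \eqref{classical_ineq_7} with $t=1$, and the Poincar\'e inequality, which produces the factor $\inf_{\boldsymbol{\Lambda}}\|\nabla(\eu+\boldsymbol{\Lambda})\|_{K}$; the elementwise Korn inequality (Lemma~\ref{symmetric_grad}) converts this to $C\|\underline{\boldsymbol{\epsilon}}(\eu)\|_K$. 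Summing over $K$ and invoking Corollary~\ref{estimate_sigma} for $\|\underline{\boldsymbol{\epsilon}}(\eu)\|_{\Oh}$ closes this estimate as well. The main obstacle is precisely this last step: the natural argument controls only the broken full gradient $\|\nabla_h\eu\|_{\Oh}$, and without Lemma~\ref{symmetric_grad} one cannot pass from $\|\nabla_h\eu\|_{\Oh}$ to $\|\underline{\boldsymbol{\epsilon}}(\eu)\|_{\Oh}$ with an $h$-independent constant, which would cost a power of $h$ and destroy optimality. Collecting the four bounds, dividing by $\|\eu\|_{\Omega}$, and taking $t,s$ in the stated ranges completes the proof.
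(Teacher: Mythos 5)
Your proof follows the paper's argument essentially verbatim: the same term-by-term estimation of the duality identity from Proposition \ref{duality_argument}, the same expansion of the numerical-trace term, the same splitting $\eu-\euhat=(\Pim\eu-\euhat)+(\eu-\Pim\eu)$ for the last term, and the same reuse of the $T_{12}$/Korn argument together with Corollary \ref{estimate_sigma}. The only deviation is in the second term, where you invoke the dual equation $\mathcal{A}\underline{\boldsymbol{\psi}}=\underline{\boldsymbol{\epsilon}}(\boldsymbol{\phi})$ and the inclusion $\underline{\boldsymbol{\epsilon}}(\boldsymbol{W}(K))\subset\underline{\boldsymbol{V}}(K)$ instead of subtracting the cellwise average of $\mathcal{A}\underline{\boldsymbol{\psi}}$ as the paper does; both yield the $O(h^{t+1})$ bound, and your variant has the minor advantage of not requiring elementwise smoothness of $\mathcal{A}$.
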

\begin{proof}
We will estimate each of the terms on the right hand side of the identity in Proposition \ref{duality_argument}. 
\begin{align*}
\bint{\mathcal{A} \esigma}{\underline{\boldsymbol{\psi}} - \Piv \underline{\boldsymbol{\psi}}} \le C h \|\esigma\|_{L^{2}(\mathcal{A},\Omega)}
 \|\underline{\boldsymbol{\psi}}\|_{1, \Omega} \le C h \|\esigma\|_{L^{2}(\mathcal{A},\Omega)} \|\eu\|_{\Omega},
\end{align*}
by the projection property \eqref{classical_ineq_2} and the regularity assumption \eqref{regularity}.
\begin{align*}
\bint{\mathcal{A} (\deltasigma)}{\Piv \underline{\boldsymbol{\psi}}} &= \bint{\mathcal{A} (\deltasigma)}{\underline{\boldsymbol{\psi}}} - \bint{\mathcal{A} (\deltasigma)}{ \underline{\boldsymbol{\psi}} - \Piv \underline{\boldsymbol{\psi}}} \\
& \hspace{-0.5cm}= \bint{\deltasigma}{\mathcal{A}\underline{\boldsymbol{\psi}} - \overline{\mathcal{A}\underline{\boldsymbol{\psi}}}} - \bint{\mathcal{A} (\deltasigma)}{ \underline{\boldsymbol{\psi}} - \Piv \underline{\boldsymbol{\psi}}} \\
& \le C h \|\deltasigma\|_{\Omega} \|\underline{\boldsymbol{\psi}}\|_{1, \Omega}\\
&\le C h \|\deltasigma\|_{\Omega} \|\eu\|_{\Omega} \\
&\le C h^{t+1} \|\underline{\boldsymbol{\sigma}}\|_{t, \Omega} \|\eu\|_{\Omega},
\end{align*}
for all $0 \le t \le k+1$. Here we applied the Galerkin orthogonal property of the local $L^2$-projection $\Piv$ and the regularity assumption \eqref{regularity}.

For the third term, by the definition of the numerical trace \eqref{numerical_trace}, we have
\begin{align*}
\bintEh{\esigma \n - (\underline{\boldsymbol{\sigma}} \n - \widehat{\underline{\boldsymbol{\sigma}}}_h \n)}{\boldsymbol{\phi} - \Piw \boldsymbol{\phi}} & = - \bintEh{(\underline{\boldsymbol{\sigma}} - \Piv \underline{\boldsymbol{\sigma}}) \n)}{\boldsymbol{\phi} - \Piw \boldsymbol{\phi}} \\
& \quad + \bintEh{\widehat{\underline{\boldsymbol{\sigma}}}_h \n - \underline{\boldsymbol{\sigma}}_h \n}{\boldsymbol{\phi} - \Piw \boldsymbol{\phi}} \\
& = - \bintEh{(\underline{\boldsymbol{\sigma}} - \Piv \underline{\boldsymbol{\sigma}}) \n)}{\boldsymbol{\phi} - \Piw \boldsymbol{\phi}} \\
& \quad - \bintEh{\tau (\Pim \boldsymbol{u}_h - \widehat{\boldsymbol{u}}_h)}{\boldsymbol{\phi} - \Piw \boldsymbol{\phi}} \\
& = - \bintEh{(\underline{\boldsymbol{\sigma}} - \Piv \underline{\boldsymbol{\sigma}}) \n)}{\boldsymbol{\phi} - \Piw \boldsymbol{\phi}} \\
& \quad + \bintEh{\tau (\Pim \eu - \euhat)}{\boldsymbol{\phi} - \Piw \boldsymbol{\phi}} \\ 
& \quad + \bintEh{\tau (\Pim (\boldsymbol{u} - \Piw \boldsymbol{u})}{\boldsymbol{\phi} - \Piw \boldsymbol{\phi}}.
\end{align*}
To bound the third term, we only need to bound the above three terms individually.
\begin{align*}
\bintEh{(\underline{\boldsymbol{\sigma}} - \Piv \underline{\boldsymbol{\sigma}}) \n)}{\boldsymbol{\phi} - \Piw \boldsymbol{\phi}} & \le \|(\underline{\boldsymbol{\sigma}} - \Piv \underline{\boldsymbol{\sigma}}) \n)\|_{\partial \Oh}\|\boldsymbol{\phi} - \Piw \boldsymbol{\phi}\|_{\partial \Oh} \\
& \le C h^{t-\frac12} \|\underline{\boldsymbol{\sigma}}\|_{t, \Omega} h^{\frac32}\|\boldsymbol{\phi}\|_{2, \Omega},
\intertext{by the standard inequalities , \eqref{classical_ineq_4}, \eqref{classical_ineq_5},}
& \le C h^{t+1}\|\underline{\boldsymbol{\sigma}}\|_{t, \Omega} \|\eu\|_{\Omega},
\end{align*}
for all $1 \le t \le k+1$. The last step is due to the regularity assumption \eqref{regularity}.

Similarly, we apply the Cauchy-Schwarz inequality and \eqref{classical_ineq_4} for the other two terms:
\begin{align*}
 \bintEh{\tau (\Pim \eu - \euhat)}{\boldsymbol{\phi} - \Piw \boldsymbol{\phi}} & \le C \tau^{\frac12} \|\tau^{\frac12} (\Pim \eu - \euhat)\|_{\partial \Oh} \|\boldsymbol{\phi} - \Piw \boldsymbol{\phi}\|_{\partial \Oh} \\
& \le C \tau^{\frac12} h^{\frac32}  \|\tau^{\frac12} (\Pim \eu - \euhat)\|_{\partial \Oh} \|\boldsymbol{\phi}\|_{2, \Omega} \\
& \le C  \tau^{\frac12} h^{\frac32}  \|\tau^{\frac12} (\Pim \eu - \euhat)\|_{\partial \Oh} \|\eu\|_{\Omega}.
\end{align*}
\begin{align*}
\bintEh{\tau (\Pim (\boldsymbol{u} - \Piw \boldsymbol{u})}{\boldsymbol{\phi} - \Piw \boldsymbol{\phi}} & \le \tau \|\Pim (\boldsymbol{u} - \Piw \boldsymbol{u})\|_{\partial \Oh} \|\boldsymbol{\phi} - \Piw \boldsymbol{\phi}\|_{\partial \Oh} \\ 
& \le  \tau \|\boldsymbol{u} - \Piw \boldsymbol{u}\|_{\partial \Oh} \|\boldsymbol{\phi} - \Piw \boldsymbol{\phi}\|_{\partial \Oh} \\
& \le C \tau h^{s-\frac12} \|\boldsymbol{u}\|_{s, \Omega} h^{\frac32}\|\boldsymbol{\phi}\|_{2, \Omega} \\
& \le C \tau h^{s+1} \|\boldsymbol{u}\|_{s, \Omega}\|\eu\|_{\Omega},
\end{align*}
for all $1 \le s \le k+2$.

Finally, for the last term in Proposition \ref{duality_argument}, we can write:
\begin{align*}
\bintEh{\eu - \euhat}{(\underline{\boldsymbol{\psi}} - \Piv \underline{\boldsymbol{\psi}}) \n} & = \bintEh{\Pim \eu - \euhat}{(\underline{\boldsymbol{\psi}} - \Piv \underline{\boldsymbol{\psi}})\n} \\
&\quad + \bintEh{\eu - \Pim \eu}{(\underline{\boldsymbol{\psi}} - \Piv \underline{\boldsymbol{\psi}}) \n}.
\end{align*}
For the first term, we can apply a similar argument as in the previous steps to obtain:
\[
\bintEh{\Pim \eu - \euhat}{(\underline{\boldsymbol{\psi}} - \Piv \underline{\boldsymbol{\psi}})\n} \le C \tau^{-\frac12} h^{\frac12} \|\tau^{\frac12}(\Pim \eu - \euhat)\|_{\partial \Oh} \|\eu\|_{\Omega}.
\]
For the second term, we apply the same argument for the estimate of $T_{12}$ in the proof of Lemma \ref{Two_terms} and obtain:
\begin{align*}
\bintEh{\eu - \Pim \eu}{(\underline{\boldsymbol{\psi}} - \Piv \underline{\boldsymbol{\psi}}) \n} &\le C h \|\underline{\boldsymbol{\psi}}\|_{1, \Omega} \|\underline{\boldsymbol{\epsilon}}(\eu)\|_{\Oh} \\
& \le C h \|\underline{\boldsymbol{\epsilon}}(\eu)\|_{\Oh} \|\eu\|_{\Omega}.
\end{align*}
Finally if we take $\tau = \mathcal{O}(h^{-1})$, combine all the above estimates and Theorem \ref{estimate_sigma}, we obtain the estimate in Theorem \ref{estimate_u}. 
\end{proof}

As a consequence of Theorem \ref{estimate_sigma}, Theorem \ref{estimate_u}, we can obtain Theorem \ref{Main_result} by a simple triangle inequality and the approximation property of the projections $\Piw, \Piv$ \eqref{classical_ineq_1}, \eqref{classical_ineq_2}.

\bigskip

{\bf Step 4: Proof of locking-free result.}
We can now give the proof of Theorem~\ref{thm_locking_free}.
\begin{proof}[Proof of Theorem~\ref{thm_locking_free}]
In what follows, we assume that $s$ is some arbitrary real number in $[1, k+1]$ and $C$ is a positive constant 
independent of $P_{T}$ and $s$. We recall that $\esigma = \Piv \underline{\boldsymbol{\sigma}} 
- \underline{\boldsymbol{\sigma}}_h, \eu = \Piw \boldsymbol{u} - \boldsymbol{u}_h, \euhat  = 
\Pim \boldsymbol{u} - \widehat{\boldsymbol{u}}_h$.

For any $B\in \mathbb{R}^{3\times 3}$, we denote $B^{D}:=B - \frac{1}{3}\text{tr}B \, I_{3}$.
So, we have 
\begin{align*}
\esigma = \esigma^{D} + \frac{1}{3}\text{tr}\esigma I_{3}.
\end{align*}

By the Assumption~\ref{ass_isotropic} and Theorem~\ref{Main_result}, we have 
\begin{equation}
\label{esigmaD}
\|P_D^{\frac12}\esigma^{D}\Vert_{L^{2}(\Omega)}\leq \|\esigma \|_{L^{2}(\mathcal{A}, \Omega)}
\leq C h^{s} (\|\boldsymbol{u}\|_{s+1, \Omega} + \|\underline{\boldsymbol{\sigma}}\|_{s, \Omega}).
\end{equation}

In order to bound $\Vert \text{tr}\esigma\Vert_{L^{2}(\Omega)}$ independently of $P_{T}^{-1}$, we would like to use the 
well-known result \cite{BrezziFortin91, BBF-book} that for any $q\in L^{2}(\Omega)$ with $\int_{\Omega}q dx = 0$, we have 
\begin{align}
\label{brezzifortin_ineq}
\Vert q\Vert_{L^{2}(\Omega)} \leq C_{0} \sup_{\boldsymbol{\eta}\in \boldsymbol{H}_{0}^{1}(\Omega)} 
\dfrac{(q,\nabla\cdot\boldsymbol{\eta})_{\Omega}}{\Vert \boldsymbol{\eta}\Vert_{H^{1}(\Omega)}},
\end{align} 
for $C_0$ solely depends on the domain $\Omega$. By the assumption $\boldsymbol{g}=0$, taking $\underline{\boldsymbol{v}} = I_{3}$ 
in (\ref{error_equation_1}), we have that 
$\int_{\Omega} \text{tr}(\mathcal{A}\esigma)dx = 0$. 
According to the Assumption~\ref{ass_isotropic} and the fact that $P_{T}>0$, we have 
$\int_{\Omega} \text{tr}\esigma dx = 0$. 

For any $\boldsymbol{\eta}\in \boldsymbol{H}_{0}^{1}(\Omega)$, we have 
\begin{align*}
(\frac{1}{3}\text{tr}\esigma, \nabla\cdot\boldsymbol{\eta})_{\Omega} 
= & -(\nabla (\frac{1}{3}\text{tr}\esigma), \boldsymbol{\eta})_{\mathcal{T}_{h}} 
+ \langle (\frac{1}{3}\text{tr}\esigma)\n, \boldsymbol{\eta}\rangle_{\partial\mathcal{T}_{h}}\\
= & -(\nabla (\frac{1}{3}\text{tr}\esigma), \Piw\boldsymbol{\eta})_{\mathcal{T}_{h}} 
+ \langle (\frac{1}{3}\text{tr}\esigma)\n, \boldsymbol{\eta}\rangle_{\partial \mathcal{T}_{h}}\\ 
= & (\frac{1}{3}\text{tr}\esigma, \nabla\cdot \Piw\boldsymbol{\eta})_{\mathcal{T}_{h}} 
-\langle (\frac{1}{3}\text{tr}\esigma)\n, \Piw \boldsymbol{\eta} - \boldsymbol{\eta} 
\rangle_{\partial \mathcal{T}_{h}}\\
= & (\esigma - \esigma^{D}, \nabla\Piw\boldsymbol{\eta})_{\mathcal{T}_{h}} - \langle (\esigma - \esigma^{D})\n, 
\Piw\boldsymbol{\eta} - \boldsymbol{\eta} \rangle_{\partial\mathcal{T}_{h}}.
\end{align*}
By (\ref{error_equation_2}) with $\boldsymbol{\omega} = \Piw \boldsymbol{\eta}$, we have 
\begin{align*}
& (\frac{1}{3}\text{tr}\esigma, \nabla\cdot\boldsymbol{\eta})_{\Omega} \\
= & \bintEh{\underline{\boldsymbol{\sigma}} \n - \widehat{\underline{\boldsymbol{\sigma}}}_h \n}{\Piw \boldsymbol{\eta}}
-(\esigma^{D}, \nabla\Piw\boldsymbol{\eta})_{\mathcal{T}_{h}} - \langle (\esigma - \esigma^{D})\n, 
\Piw\boldsymbol{\eta} - \boldsymbol{\eta} \rangle_{\partial\mathcal{T}_{h}}\\
= & T_{1} + T_{2},
\end{align*}
where 
\begin{align*}
T_{1} := & \bintEh{\underline{\boldsymbol{\sigma}} \n - \widehat{\underline{\boldsymbol{\sigma}}}_h \n}{\Piw \boldsymbol{\eta}} 
- \langle \esigma \n, \Piw\boldsymbol{\eta} - \boldsymbol{\eta} \rangle_{\partial\mathcal{T}_{h}},\\
T_{2} := & -(\esigma^{D}, \nabla\Piw\boldsymbol{\eta})_{\mathcal{T}_{h}} + \langle \esigma^{D}\n, 
\Piw\boldsymbol{\eta} - \boldsymbol{\eta} \rangle_{\partial\mathcal{T}_{h}}.
\end{align*}

For the bound of $T_{1}$, by (\ref{strong_cont}) and the fact that $\boldsymbol{\eta}=0$ on $\partial\Omega$, we have 
\begin{align*}
& \bintEh{\underline{\boldsymbol{\sigma}} \n - \widehat{\underline{\boldsymbol{\sigma}}}_h \n}{\Piw \boldsymbol{\eta}} \\ 
= & \bintEh{\underline{\boldsymbol{\sigma}} \n}{\Piw \boldsymbol{\eta} - \boldsymbol{\eta}} 
-\bintEh{\widehat{\underline{\boldsymbol{\sigma}}}_h \n}
{\Piw \boldsymbol{\eta} - \Pim\boldsymbol{\eta}} \\
= & \bintEh{\underline{\boldsymbol{\sigma}} \n - \widehat{\underline{\boldsymbol{\sigma}}}_h \n}
{\Piw \boldsymbol{\eta} - \boldsymbol{\eta}} \\
= & \langle (\underline{\boldsymbol{\sigma}} - \Piv \underline{\boldsymbol{\sigma}})\n, 
\Piw\boldsymbol{\eta} - \boldsymbol{\eta} \rangle_{\partial\mathcal{T}_{h}} 
+\langle \esigma\n, 
\Piw\boldsymbol{\eta} - \boldsymbol{\eta} \rangle_{\partial\mathcal{T}_{h}} 
+\langle \tau(\Pim \boldsymbol{u}_{h} - \widehat{\boldsymbol{u}}_{h}), 
\Piw\boldsymbol{\eta} - \boldsymbol{\eta} \rangle_{\partial\mathcal{T}_{h}}. 
\end{align*}
So, we have 
\begin{align}
\label{T1_final_form_locking_free}
T_{1} = \langle (\underline{\boldsymbol{\sigma}} - \Piv \underline{\boldsymbol{\sigma}})\n, 
\Piw\boldsymbol{\eta} - \boldsymbol{\eta} \rangle_{\partial\mathcal{T}_{h}} 
+\langle \tau(\Pim \boldsymbol{u}_{h} - \widehat{\boldsymbol{u}}_{h}), 
\Piw\boldsymbol{\eta} - \boldsymbol{\eta} \rangle_{\partial\mathcal{T}_{h}}.  
\end{align}
According to Corollary~\ref{estimate_sigma}, we have 
\begin{equation}\label{trace_error}
 \|\tau^{\frac12}(\Pim \eu - \euhat)\|_{\partial \Oh} \leq 
 C h^{s} (\|\boldsymbol{u}\|_{s+1, \Omega} + \|\underline{\boldsymbol{\sigma}}\|_{s, \Omega}).
\end{equation}
By the definition of $\euhat$ and $\eu$, we have 
\begin{align*}
 \|\tau^{\frac12}(\Pim \boldsymbol{u}_{h} - \widehat{\boldsymbol{u}}_{h})\|^2_{\partial \Oh} 
 & \leq 2\|\tau^{\frac12}( \Pim \eu - \euhat)\|^2_{\partial \mathcal{T}_h} 
 +2\|\tau^{\frac12} \Pim (\boldsymbol{u} - \Piw \boldsymbol{u})\|^2_{\partial \Oh} \\
& \le 2\|\tau^{\frac12}( \Pim \eu - \euhat)\|^2_{\partial \mathcal{T}_h} 
+2 \|\tau^{\frac12} (\boldsymbol{u} - \Piw \boldsymbol{u})\|^2_{\partial \Oh}.
\end{align*}
Now applying Young's inequality and \eqref{trace_error}, \eqref{classical_ineq_1} we obtain:
\begin{equation}
\label{jump_difference_ineq1}
 \|\tau^{\frac12}(\Pim \boldsymbol{u}_{h} - \widehat{\boldsymbol{u}}_{h})\|_{\partial \Oh} \leq 
 C h^{s} (\|\boldsymbol{u}\|_{s+1, \Omega} + \|\underline{\boldsymbol{\sigma}}\|_{s, \Omega}).
\end{equation}
According to \eqref{T1_final_form_locking_free}, \eqref{jump_difference_ineq1}, we have 
\begin{align}
\label{T1_bound_locking_free}
T_{1} \leq  C h^{s} (\|\boldsymbol{u}\|_{s+1, \Omega} + \|\underline{\boldsymbol{\sigma}}\|_{s, \Omega})
\Vert \boldsymbol{\eta}\Vert_{H^{1}(\Omega)}.
\end{align}  

For the bound of $T_{2}$, we have 
\begin{align*}
T_{2} = & -(\esigma^{D}, \nabla\Piw\boldsymbol{\eta})_{\mathcal{T}_{h}} + \langle \esigma^{D}\n, 
\Piw\boldsymbol{\eta} - \boldsymbol{\eta} \rangle_{\partial\mathcal{T}_{h}}\\
= & -(\esigma^{D}, \nabla(\Piw\boldsymbol{\eta}-\boldsymbol{\eta}))_{\mathcal{T}_{h}} 
+ \langle \esigma^{D}\n, \Piw\boldsymbol{\eta} - \boldsymbol{\eta} \rangle_{\partial\mathcal{T}_{h}}
-(\esigma^{D}, \nabla \boldsymbol{\eta})_{\mathcal{T}_{h}}\\
= & (\nabla\cdot\esigma^{D}, \Piw\boldsymbol{\eta}-\boldsymbol{\eta})_{\mathcal{T}_{h}} 
-(\esigma^{D}, \nabla \boldsymbol{\eta})_{\mathcal{T}_{h}}\\ 
= & -(\esigma^{D}, \nabla \boldsymbol{\eta})_{\mathcal{T}_{h}}.
\end{align*}
By (\ref{esigmaD}), we have 
\begin{align}
\label{T2_bound_locking_free}
T_{2} \leq C h^{s} (\|\boldsymbol{u}\|_{s+1, \Omega} + \|\underline{\boldsymbol{\sigma}}\|_{s, \Omega}) 
\Vert \boldsymbol{\eta}\Vert_{H^{1}(\Omega)}.
\end{align}

Finally, combining the estimates \eqref{esigmaD}, \eqref{brezzifortin_ineq}, \eqref{T1_bound_locking_free}, 
\eqref{T2_bound_locking_free}, we have 
\begin{align*}
\Vert \esigma\Vert_{L^{2}(\Omega)} \leq C_{1} h^{s} (\|\boldsymbol{u}\|_{s+1, \Omega} 
+ \|\underline{\boldsymbol{\sigma}}\|_{s, \Omega}) 
\Vert \boldsymbol{\eta}\Vert_{H^{1}(\Omega)}.
\end{align*}
Here the constant $C_{1}$ is independent of $P_{T}^{-1}$.
\end{proof}

\section{Numerical Experiment} \label{numerical}
In this section,  we display numerical experiments in 2D to verify the error estimates provided in Theorem \ref{Main_result}.  We also display numerical results showing that our method does not exhibit volumetric-locking when the material tends to be incompressible.  In addition, our numerical results suggest that the error estimates provided in Theorem \ref{thm_locking_free} for the incompressible limit case are \textit{sharp}.

We carry out the numerical experiments on the domain $\Omega = (0,1) \times (0,1)$ and monitor the errors $\| \Piv \underline{\boldsymbol{\sigma}} - \underline{\boldsymbol{\sigma}}_h \Vert_{L^{2}(\Omega)}$ and $\| \Piw \boldsymbol{u} - \boldsymbol{u}_h\Vert_{L^{2}(\Omega)}$.To explore the dependence of  the convergence properties of our method with respect to the form of the meshes, we consider two types of meshes, as shown in FIGURE \ref{fig:Mesh}.
\begin{figure}
\includegraphics[scale=0.3]{./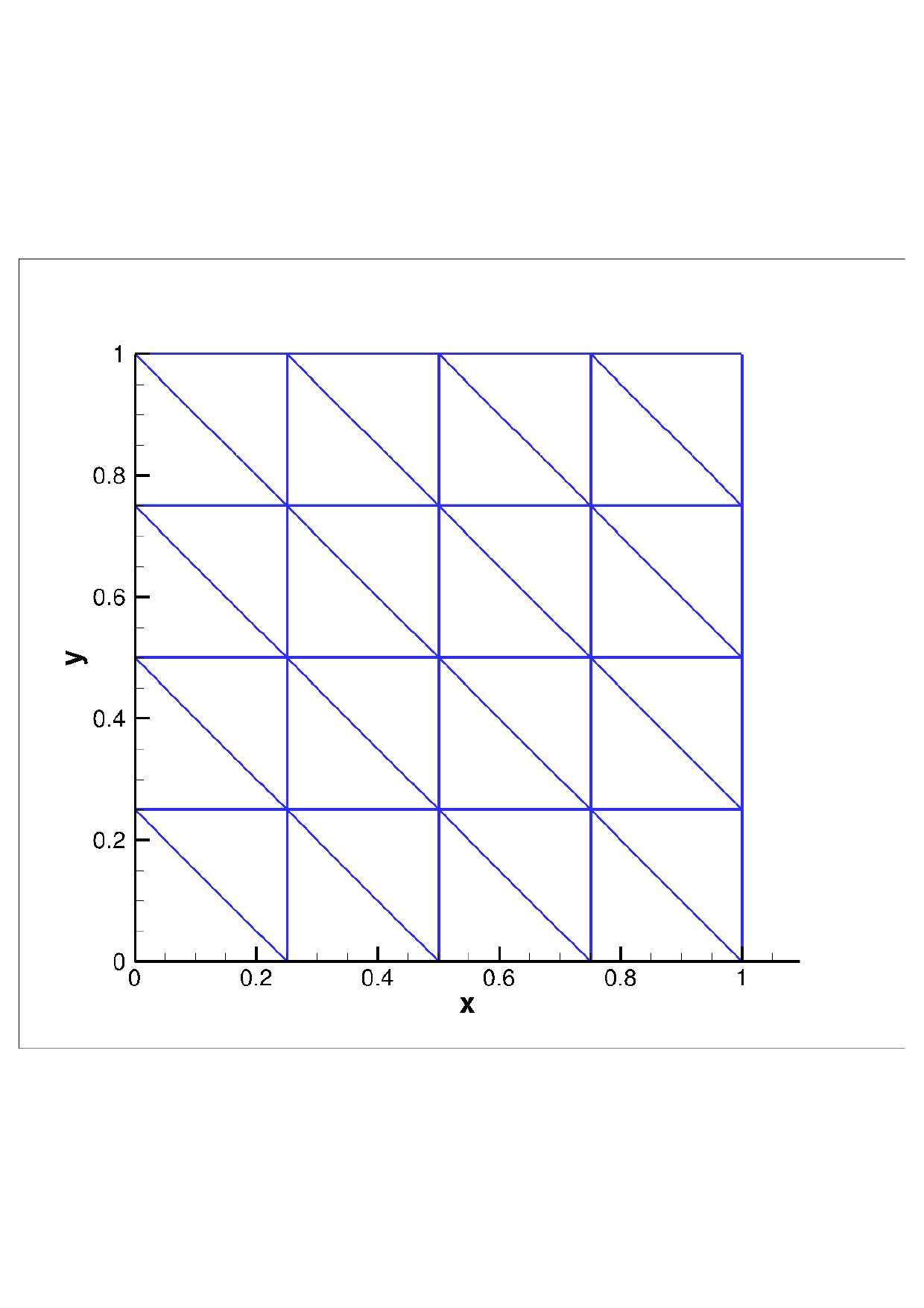}
\includegraphics[scale=0.3]{./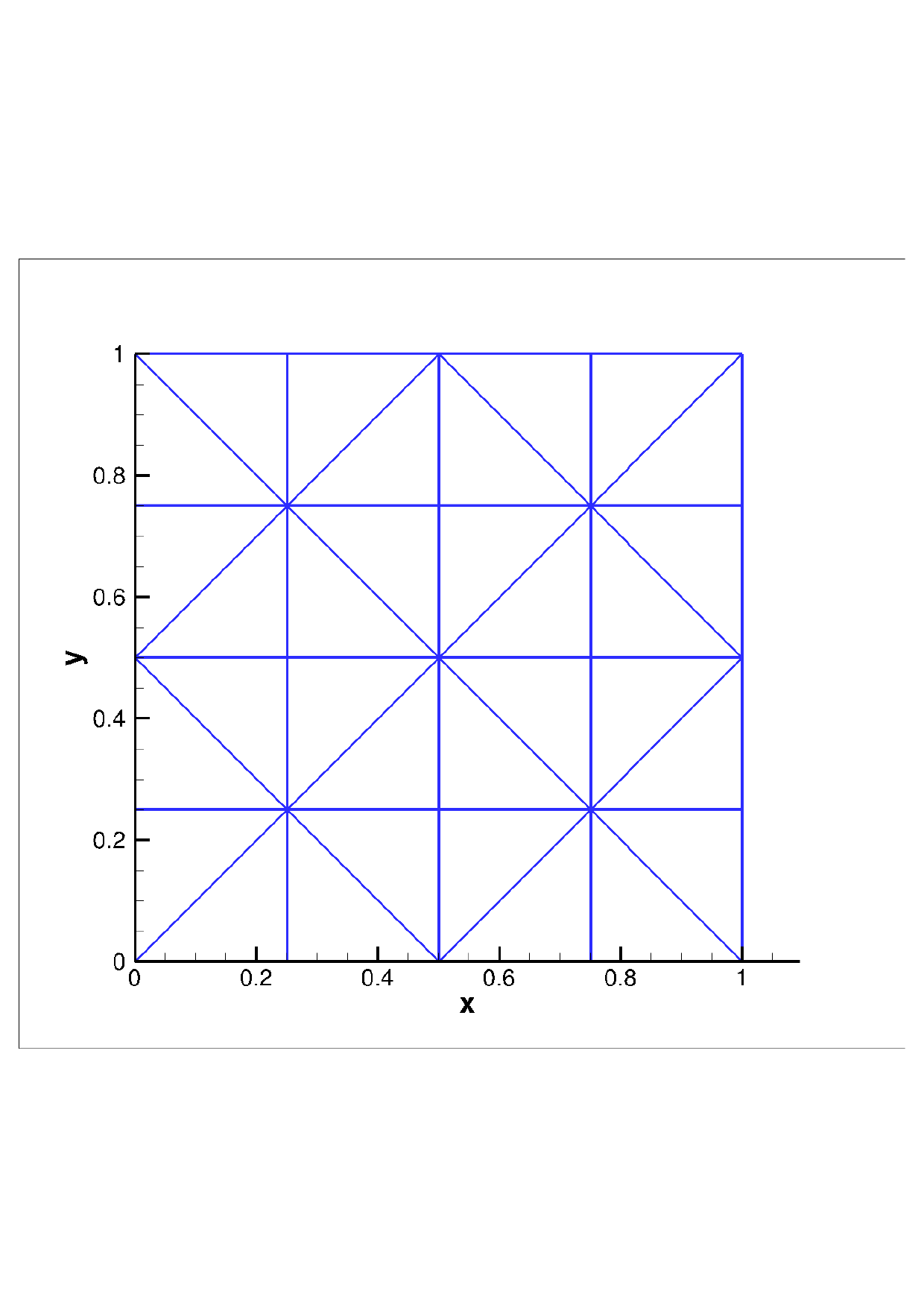} \caption{An example of Mesh-$1$(left) and Mesh-$2$(right) with $h = 0.354$}
\label{fig:Mesh} 
\end{figure}

\begin{table}[h]
\begin{center}
\scriptsize
\begin{tabular}{  c  c  c  c  c  c  c  c  c  c } \hline
 & &\multicolumn{2}{c}{$\| \Piv \underline{\boldsymbol{\sigma}} - \underline{\boldsymbol{\sigma}}_h \Vert_{L^{2}(\Omega)}$} & \multicolumn{2}{c}{$\|\Piw \boldsymbol{u} - \boldsymbol{u}_h\Vert_{L^{2}(\Omega)}$} & \multicolumn{2}{c}{$\| \Piv \underline{\boldsymbol{\sigma}} - \underline{\boldsymbol{\sigma}}_h \Vert_{L^{2}(\Omega)}$} & \multicolumn{2}{c}{$\| \Piw \boldsymbol{u} - \boldsymbol{u}_h\Vert_{L^{2}(\Omega)}$}  \\\hline
  & &\multicolumn{4}{c}{Mesh-$1$} & \multicolumn{4}{c}{Mesh-$2$}  \\\hline
$k$ & Mesh & Error & Order  & Error & Order  & Error & Order & Error & Order \\ \hline 
\multirow{3}{*}{0} & $h$ & 9.81E-02 & - & 3.74E-03 & - & 4.20E-01 & - &8.28E+12 & -\\ 
&$h/2$ & 9.50E-02 & 0.05 & 3.69E-02 & 0.02 & 2.14E-02 & 0.97 &1.05E+12 &2.98 \\
&$h/4$ & 9.42E-02 & 0.01 & 3.68E-03 & 0.00 & 1.05E-02 & 1.02    &1.41E+11&2.90 \\ 
&$h/8$ & 9.41E-02 & 0.00 & 3.68E-03 & 0.00 & 5.22E-03 & 1.01    &1.79E+10&2.98 \\
&$h/16$ & 9.41E-02 & 0.00 & 3.68E-03 & 0.00 & 8.17E-01 & -3.97    &1.39E+12&-6.28 \\
\hline
\multirow{3}{*}{1}& $h$ & 2.26E-03 & - & 1.88E-03 & - & 2.04E-03 & - & 9.41E-04 & -\\ 
&$h/2$ & 7.24E-03 & 1.65 & 3.57E-04 & 2.40 & 5.90E-03 & 1.79 & 1.45E-04 & 2.70 \\ 
&$h/4$ & 2.09E-03 & 1.79 & 5.51E-05 & 2.69 & 1.58E-03 & 1.92 & 2.00E-05 & 2.86\\ 
&$h/8$ & 5.60E-04 & 1.90 & 7.60E-06 & 2.86 & 4.08E-04 & 1.95 & 2.62E-06 & 2.93\\
&$h/16$ & 1.45E-04 & 1.95 & 9.93E-07 & 2.94 & 7.01E-06 & 2.00 & 3.35E-07 & 2.97\\
\hline
\multirow{3}{*}{2}& $h$ &1.24E-03  & - &5.52E-05  & - &1.23E-03  & - &3.53E-05  & -\\ 
&$h/2$ &1.57E-04  &2.98  &3.74E-06  &3.88  &1.57E-04  &2.97  &2.25E-06  &3.97 \\ 
&$h/4$ &1.97E-05  &2.99  &2.43E-07  &3.95  &1.97E-05  &2.99  &1.42E-07  &3.99 \\ 
&$h/8$ &2.46E-06  &3.00  &1.54E-08  &3.97  &2.47E-06  &3.00  &8.90E-09  &3.99 \\
&$h/16$ &3.08E-07  &3.00  &9.73E-10  &3.99 &3.10E-07  &3.00  &5.58E-10  &4.00 \\
\hline
\multirow{3}{*}{3}& $h$ &5.26E-05  & - &1.45E-06  & - &5.33E-05  & - &1.27E-06  & -\\ 
&$h/2$ &3.51E-06  &3.90  &4.90E-08  &4.89  &3.54E-06  &3.91  &4.36E-08  &4.86 \\ 
&$h/4$ &2.26E-07  &3.96  &1.59E-09  &4.95  &2.29E-07  &3.95  &1.43E-09  &4.93 \\ 
&$h/8$ &1.42E-08  &3.98  &5.12E-11  &4.96 &1.45E-08  &3.98  &4.59E-11  &4.96 \\
\hline
\end{tabular}
\end{center}
\caption{History of convergence for the exact solution (\ref{eq:Test_1}) where $h = 0.177$}\label{table:smooth}
\end{table}

\begin{figure}
\centering
\captionsetup{justification=centering}
\includegraphics[height=200mm, width = 135mm]{./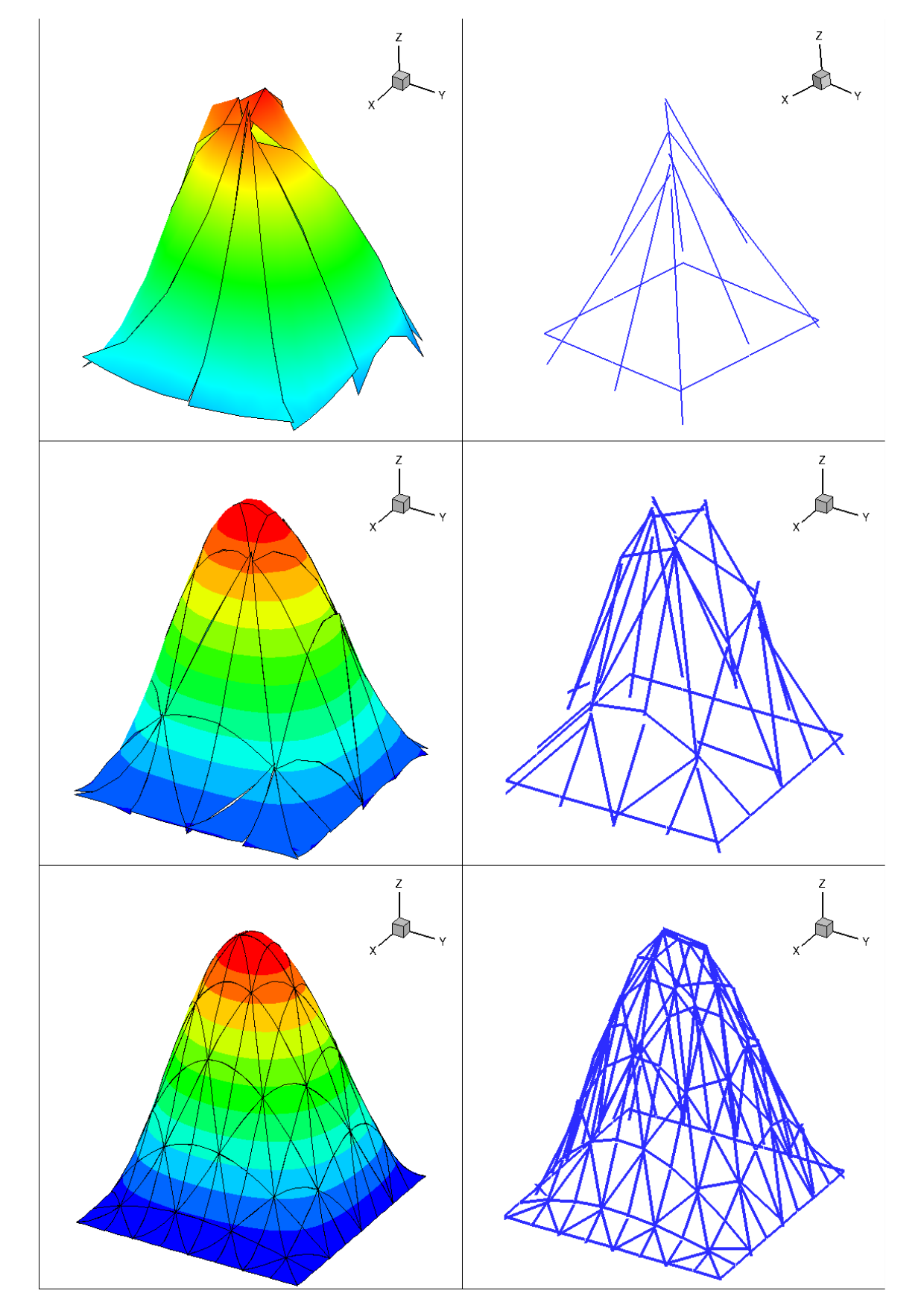} \caption{Convergence sequence of the displacement on Mesh-2 for $k=1$. Left: $u_h^1$(quadratic), Right: $\widehat{u}_h^1$(linear) }
\label{fig:convseq} 
\end{figure}

\subsection{Order of convergence of our HDG method}
In this section, we consider an isotropic material in 2D with plain stress condition and take the Poisson Ratio $\nu = 0.3$ and the Young's Modulus $E = 1$:
\begin{align}
 \mathcal{A}\underline{\boldsymbol{\sigma}} &= \frac{1+\nu}{E}\underline{\boldsymbol{\sigma}}
- \frac{\nu}{E}\text{tr}(\underline{\boldsymbol{\sigma}})I_{2}.
\end{align}
In particular, we test our HDG method on a smooth solution $\boldsymbol{u} = (u_1,u_2)$ in \cite{SoonCockburnStolarski09}, such that:
\begin{eqnarray} \label{eq:Test_1}
u_1 = 10 \sin(\pi x) (1-x)(y-y^2)(1-0.5y), \ \ \ \ u_2 = 0.
\end{eqnarray} 
We set $\boldsymbol{f}$ and $\boldsymbol{g}$ to satisfy the above exact solution (\ref{eq:Test_1}).  To explore the convergence properties of our method, we conduct numerical experiments for $k=0,1,2,3$ and take $\tau = \mathcal{O}(\frac{1}{h})$. The history of convergence is displayed in Table \ref{table:smooth}. We observe that when $k \geq 1$, our method converges with order $k+1$ in the stress and order $k+2$ in the displacement for both Mesh-$1$ and Mesh-$2$.  In addition, the numerical results suggest that our method does not converge to the exact solution when $k=0$.  To aid visualization, we also plot the convergence sequence of the displacement in FIGURE \ref{fig:convseq}. 

\begin{table}
\begin{center}
\scriptsize
\begin{tabular}{  c  c  c  c  c  c  c  c  c  c } \hline
 & &\multicolumn{2}{c}{$\| \Piv \underline{\boldsymbol{\sigma}} - \underline{\boldsymbol{\sigma}}_h \Vert_{L^{2}(\Omega)}$} & \multicolumn{2}{c}{$\|\Piw \boldsymbol{u} - \boldsymbol{u}_h\Vert_{L^{2}(\Omega)}$} & \multicolumn{2}{c}{$\| \Piv \underline{\boldsymbol{\sigma}} - \underline{\boldsymbol{\sigma}}_h \Vert_{L^{2}(\Omega)}$} & \multicolumn{2}{c}{$\| \Piw \boldsymbol{u} - \boldsymbol{u}_h\Vert_{L^{2}(\Omega)}$}  \\\hline
  & &\multicolumn{8}{c}{$\nu = 0.49$}   \\\hline
 & &\multicolumn{4}{c}{Mesh-$1$} & \multicolumn{4}{c}{Mesh-$2$}  \\\hline
$k$ & Mesh & Error & Order  & Error & Order  & Error & Order & Error & Order \\ \hline 
\multirow{3}{*}{1} & $h$ & 4.12E-03 & - & 1.14E-04 & - & 4.12E-03 & - &9.15E-04 & -\\ 
&$h/2$ & 1.22E-03 & 1.75 & 2.53E-05 & 2.17 & 1.27E-03 & 1.70 &1.47E-05 & 2.64\\ 
&$h/4$ & 3.32E-04 & 1.88 & 4.76E-06 & 2.41 & 3.40E-04 & 1.90 &2.00E-06 &2.87 \\
&$h/8$ & 8.69E-05 & 1.93 & 8.17E-07 & 2.54 & 8.64E-05 & 1.98 &2.58E-07&2.96 \\ 
&$h/16$ & 2.22E-05 & 1.97 & 1.23E-07 & 2.73 & 2.17E-05 & 1.99 &3.27E-08&2.98 \\
\hline
\multirow{3}{*}{2}& $h$ & 9.33E-04 & - & 2.00E-05 & - & 9.37E-04 & - & 1.24E-05 & -\\ 
&$h/2$ & 1.29E-04 & 2.85 & 1.65E-06 & 3.60 & 1.32E-04 & 2.83 & 9.42E-07 & 3.71 \\ 
&$h/4$ & 1.65E-05 & 2.97 & 1.17E-07 & 3.82 & 1.64E-05 & 3.00 & 6.01E-08 & 3.97\\ 
&$h/8$ & 2.07E-06 & 3.00 & 7.76E-09 & 3.92 & 2.05E-06 & 3.00 & 3.77E-09 & 3.99\\
&$h/16$ & 2.58E-07 & 3.00 & 4.98E-10 & 3.96 & 2.56E-07 & 3.00 & 2.36E-10 & 4.00\\
\hline
\multirow{3}{*}{3}& $h$ &1.44E-04  & - &1.65E-06  & - &1.57E-04  & - &1.53E-06  & -\\ 
&$h/2$ &9.78E-06  &3.88  &6.18E-08  &4.74  &9.87E-06  &3.99  &5.11E-08  &4.90 \\ 
&$h/4$ &6.27E-07  &3.96  &2.09E-09  &4.89  &6.19E-07  &3.99  &1.68E-09  &4.93 \\ 
&$h/8$ &3.95E-08  &3.99  &6.77E-11  &4.95  &3.89E-08  &3.99  &5.43E-11  &4.95 \\
&$h/16$ &2.49E-09  &3.99  &2.21E-12  &4.94 &2.44E-09  &4.00  &1.74E-12  &4.97 \\
\hline
  & &\multicolumn{8}{c}{$\nu = 0.4999$}   \\\hline
 & &\multicolumn{4}{c}{Mesh-$1$} & \multicolumn{4}{c}{Mesh-$2$}  \\\hline
$k$ & Mesh & Error & Order  & Error & Order  & Error & Order & Error & Order \\ \hline 
\multirow{3}{*}{1} & $h$ & 4.12E-03 & - & 1.13E-04 & - & 4.13E-03 & - &9.05E-04 & -\\ 
&$h/2$ & 1.22E-03 & 1.76 & 2.52E-05 & 2.17 & 1.26E-03 & 1.71 &1.45E-05 & 2.64\\ 
&$h/4$ & 3.31E-04 & 1.88 & 4.72E-06 & 2.41 & 3.39E-04 & 1.90 &1.98E-06 &2.87 \\
&$h/8$ & 8.66E-05 & 1.93 & 8.11E-07 & 2.54 & 8.61E-05 & 1.98 &2.55E-07&2.96 \\ 
&$h/16$ & 2.21E-05 & 1.97 & 1.22E-07 & 2.73 & 2.16E-05 & 1.99 &3.23E-08&2.98 \\
\hline
\multirow{3}{*}{2}& $h$ & 9.32E-04 & - & 1.98E-05 & - & 9.34E-04 & - & 1.22E-05 & -\\ 
&$h/2$ & 1.29E-04 & 2.86 & 1.64E-06 & 3.60 & 1.32E-04 & 2.83 & 9.31E-07 & 3.72 \\ 
&$h/4$ & 1.64E-05 & 2.97 & 1.16E-07 & 3.82 & 1.64E-05 & 3.00 & 5.94E-08 & 3.97\\ 
&$h/8$ & 2.06E-06 & 3.00 & 7.70E-09 & 3.92 & 2.04E-06 & 3.00 & 3.73E-09 & 3.99\\
&$h/16$ & 2.57E-07 & 3.00 & 4.95E-10 & 3.96 & 2.55E-07 & 3.00 & 2.33E-10 & 4.00\\
\hline
\multirow{3}{*}{3}& $h$ &1.44E-04  & - &1.63E-06  & - &1.57E-04  & - &1.51E-06  & -\\ 
&$h/2$ &9.75E-06  &3.88  &6.09E-08  &4.74  &9.83E-06  &3.99  &5.03E-08  &4.90 \\ 
&$h/4$ &6.25E-07  &3.96  &2.06E-09  &4.89  &6.17E-07  &3.99  &1.66E-09  &4.93 \\ 
&$h/8$ &3.94E-08  &3.99  &6.72E-11  &4.95  &3.87E-08  &3.99  &5.39E-11  &4.94 \\
&$h/16$ &2.48E-09  &3.99  &2.20E-12  &4.93 &2.43E-09  &3.99  &1.73E-12  &4.96 \\
\hline
  & &\multicolumn{8}{c}{$\nu = 0.49999$}   \\\hline
 & &\multicolumn{4}{c}{Mesh-$1$} & \multicolumn{4}{c}{Mesh-$2$}  \\\hline
$k$ & Mesh & Error & Order  & Error & Order  & Error & Order & Error & Order \\ \hline 
\multirow{3}{*}{1} & $h$ & 4.12E-03 & - & 1.13E-04 & - & 4.13E-03 & - &9.05E-04 & -\\ 
&$h/2$ & 1.22E-03 & 1.76 & 2.52E-05 & 2.17 & 1.26E-03 & 1.71 &1.45E-05 & 2.64\\ 
&$h/4$ & 3.31E-04 & 1.88 & 4.72E-06 & 2.41 & 3.39E-04 & 1.90 &1.98E-06 &2.87 \\
&$h/8$ & 8.66E-05 & 1.93 & 8.11E-07 & 2.54 & 8.61E-05 & 1.98 &2.55E-07&2.96 \\ 
&$h/16$ & 2.21E-05 & 1.97 & 1.22E-07 & 2.73 & 2.16E-05 & 1.99 &3.23E-08&2.98 \\
\hline
\multirow{3}{*}{2}& $h$ & 9.32E-04 & - & 1.98E-05 & - & 9.34E-04 & - & 1.22E-05 & -\\ 
&$h/2$ & 1.29E-04 & 2.86 & 1.64E-06 & 3.60 & 1.32E-04 & 2.83 & 9.31E-07 & 3.72 \\ 
&$h/4$ & 1.64E-05 & 2.97 & 1.16E-07 & 3.82 & 1.64E-05 & 3.00 & 5.94E-08 & 3.97\\ 
&$h/8$ & 2.06E-06 & 3.00 & 7.70E-09 & 3.92 & 2.04E-06 & 3.00 & 3.73E-09 & 3.99\\
&$h/16$ & 2.57E-07 & 3.00 & 4.95E-10 & 3.96 & 2.55E-07 & 3.00 & 2.33E-10 & 4.00\\
\hline
\multirow{3}{*}{3}& $h$ &1.44E-04  & - &1.63E-06  & - &1.57E-04  & - &1.50E-06  & -\\ 
&$h/2$ &9.75E-06  &3.88  &6.09E-08  &4.74  &9.83E-06  &3.99  &5.03E-08  &4.90 \\ 
&$h/4$ &6.25E-07  &3.96  &2.06E-09  &4.89  &6.17E-07  &3.99  &1.66E-09  &4.93 \\ 
&$h/8$ &3.94E-08  &3.99  &6.72E-11  &4.95  &3.86E-08  &3.99  &5.39E-11  &4.94 \\
&$h/16$ &2.48E-09  &3.99  &2.20E-12  &4.93 &2.44E-09  &3.98  &1.74E-12  &4.95 \\
\hline
\end{tabular}
\end{center}
\caption{History of convergence for the exact solution (\ref{eq:Test_2}) where $h = 0.354$}\label{table:lockingtest}
\end{table}

\begin{figure}
\centering
\captionsetup{justification=centering}
\includegraphics[height=200mm, width = 135mm]{./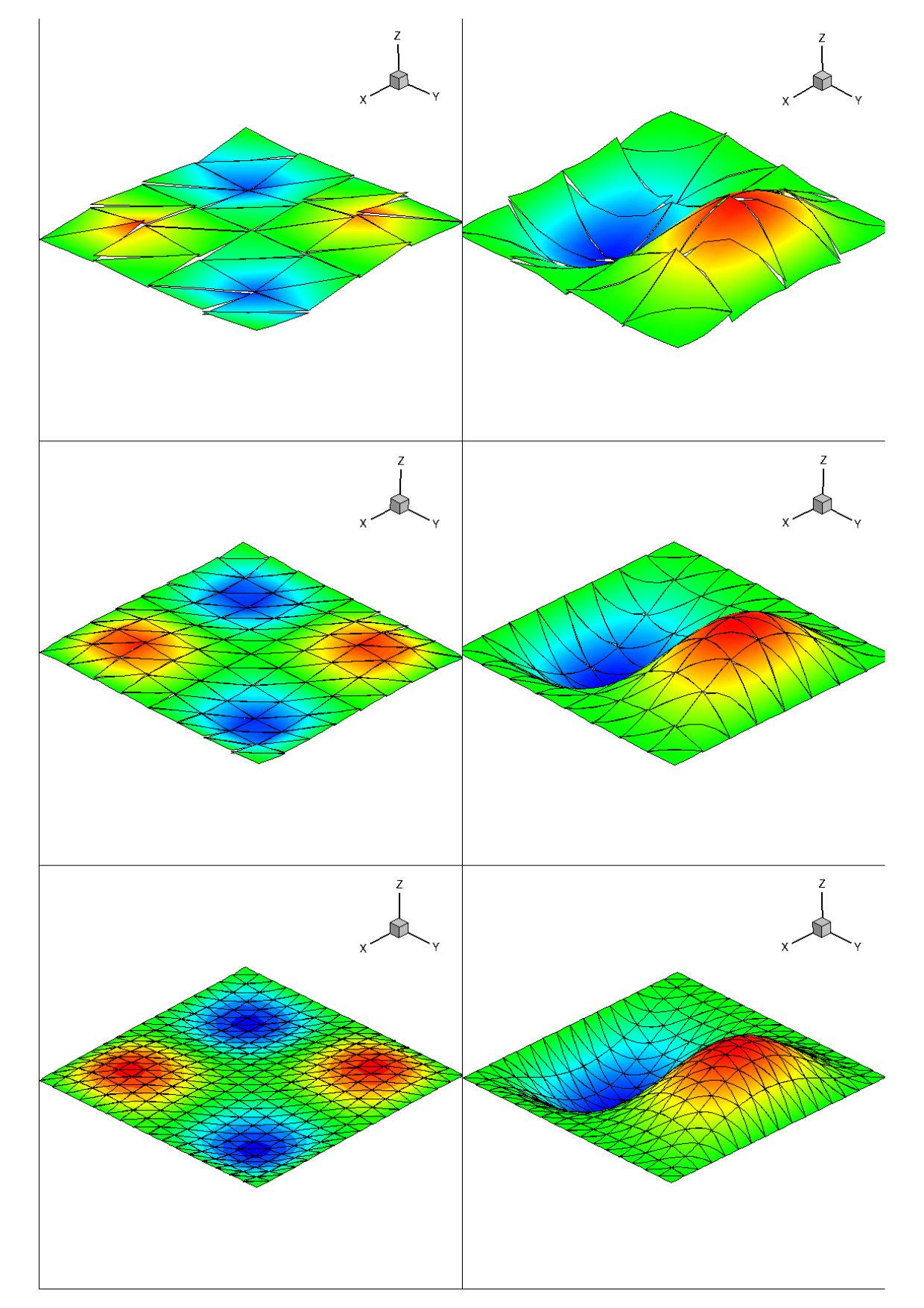} \caption{ Convergence sequence of the stress and the displacement on Mesh-1 for $k=1$ and $\nu = 0.49999$.  Left: $\sigma_h^{11}$(linear), right $u_h^1$(quadratic). }
\label{fig:convseq2} 
\end{figure}
\subsection{Locking experiments}
In this section, we consider an isotropic material in 2D with plane-strain condition:
\begin{align}
 \mathcal{A}\underline{\boldsymbol{\sigma}} &= \frac{1+\nu}{E}\underline{\boldsymbol{\sigma}}
- \frac{(1+\nu)\nu}{E}\text{tr}(\underline{\boldsymbol{\sigma}})I_{2}
\end{align}
where $\nu$ is the Poisson Ratio and $E$ is the Young's Modulus. This example satisfies the Assumption 2.1 with $P_D = \frac{1+\nu}{E}$ and $P_T = \frac{(1+\nu)}{E}(1 -2\nu)$. By sending $\nu \to 0.5$, this material is nearly incompressible. We consider an example in \cite{BercovierLivne79, SoonCockburnStolarski09} by setting $\boldsymbol{f}$ and $\boldsymbol{g}$ to satisfy the exact solution: \begin{eqnarray} \label{eq:Test_2}
u_1 &=& -x^2(x-1)^2y(y-1)(2y-1) \\
u_2 &=& \ \ y^2(y-1)^2x(x-1)(2x-1)
\end{eqnarray} 
with $E = 3$. We conduct numerical experiments for this problem for $k=1,2,3$ with $\tau = \mathcal{O}(\frac{1}{h})$. The history of convergence is displayed in Table \ref{table:lockingtest} and the convergence sequence of the stress and the displacement is plotted in FIGURE \ref{fig:convseq2}. By increasing $\nu$ from $0.49$ to $0.49999$, we observe the same order of convergence which is optimal in both stress and displacement. In addition, our numerical results demonstrate that the convergence properties of our method do not depend on the type of meshes.  Altoghether, this observation exactly aligns with the error estimates provided in Theorem 2.2 and it justifies that our HDG method is free from volumetric locking.

{\bf Acknowledgements}. 
The work of the first author was partially supported by a grant from the Research Grants Council of 
the Hong Kong Special Administrative Region, China (Project No. CityU 11302014). As a convention the names of the authors are alphabetically ordered. All authors contributed equally in this article. Finally authors would like to thank Guosheng Fu at University of Minnesota for fruitful discussion.

%\vspace{3cm}
%\begin{framed}
%{\red{I think now the main part of the paper is complete. Please check my proofs carefully. Other than that, we need to address the following things:}}

%{\red{-we need to fill the introduction and concluding remarks. As for the introduction, I think we need to do three things, (1) Our HDG analysis is different from %our old projection based analysis. (2) One of the motivation is in fact from Weak Galerkin by Wang, Ye. (3) Compare with other methods.}}

%{\red{-As for the concluding remark, we can say something like the advantage of this approach of the analysis and also, it maybe worth to mention that this %analysis also works for $P_k, P_k, P_k$ case, although we may not get optimal order for everything. Cockburn and S.-C. Soon, see \cite{SoonCockburnStolarski09} %showed the method is optimal numerically. We can argue that based on our analysis, we can only prove the method is suboptimal. }}

%{\red{- This analysis can be applied for arbitrary polyhedral triangulations, I don't know if we should push this point in the paper too much because basically this is %one big feature of Weak Galerkin. Anyway, we can talk about this later.}}

%\end{framed}

\providecommand{\bysame}{\leavevmode\hbox to3em{\hrulefill}\thinspace}
\providecommand{\MR}{\relax\ifhmode\unskip\space\fi MR }
% \MRhref is called by the amsart/book/proc definition of \MR.
\providecommand{\MRhref}[2]{%
  \href{http://www.ams.org/mathscinet-getitem?mr=#1}{#2}
}
\providecommand{\href}[2]{#2}

\end{document}